\documentclass[pdflatex,sn-mathphys-num]{sn-jnl}

\usepackage{amsmath,amssymb,amsfonts,mathtools}
\usepackage{mathrsfs}
\usepackage{booktabs}
\usepackage{xcolor}
\usepackage{hyperref}

\theoremstyle{thmstyleone}
\newtheorem{theorem}{Theorem}[section]
\newtheorem{proposition}[theorem]{Proposition}
\newtheorem{lemma}[theorem]{Lemma}
\newtheorem{corollary}[theorem]{Corollary}

\theoremstyle{thmstyletwo}
\newtheorem{remark}[theorem]{Remark}

\theoremstyle{thmstylethree}
\newtheorem{definition}[theorem]{Definition}

\DeclareMathOperator{\Law}{Law}
\DeclareMathOperator{\supp}{supp}
\DeclareMathOperator{\Aut}{Aut}

\newcommand{\N}{\mathbb N}
\newcommand{\R}{\mathbb R}

\newcommand{\J}{\mathcal J}

\newcommand{\Kcal}{\mathcal K}

\newcommand{\Dcal}{\mathcal D}
\newcommand{\Xbf}{\mathbf X}
\newcommand{\Ybf}{\mathbf Y}
\newcommand{\Zbf}{\mathbf Z}
\newcommand{\Id}{\mathrm{id}}
\newcommand{\dd}{\,\mathrm d}

\begin{document}

\title[Observing joinings]{Observing Joinings: A Distance-Array Characterization of Furstenberg Disjointness}

\author*[1,2]{\fnm{Ao} \sur{Xu}}\email{xuao24@mails.jlu.edu.cn}

\affil[1]{\orgdiv{School of Artificial Intelligence}, \orgname{Jilin University}, \orgaddress{\street{No. 2699 Qianjin Street}, \city{Changchun}, \postcode{130012}, \country{China}}}

\affil[2]{\orgname{Zhongguancun Academy}, \orgaddress{\street{Daniufang 2nd Ring Road, Haidian District}, \city{Beijing}, \postcode{100094}, \country{China}}}

\abstract{Joinings are fundamental global objects in ergodic theory, yet in compact metric models one naturally observes only finite orbit-distance patterns.  We bridge this gap by introducing multi-particle distance arrays, which sample finite orbit segments and record their joint metric evolution.  In the anchored fixed-model setting, this framework yields a purely finite-observable characterization of Furstenberg disjointness: two systems are disjoint if and only if all their anchored multi-orbit distance-array projections are independent.  The structural engine behind this criterion is a marked and colored version of the Gromov--Vershik reconstruction principle for exchangeable arrays; unanchored arrays reconstruct the intrinsic twin-free quotient, while anchors recover the actual joining in a fixed model.  To quantify this independence, we introduce Wasserstein dependence coefficients, establishing an all-order zero criterion for disjointness, and show that weak neighborhoods of the product joining always admit finite distance-array certificates.  Examples from compact rotations, Bernoulli and reversible Markov shifts, common factors, Kronecker factors, and weak mixing demonstrate the strict necessity of the multi-particle level and the broad scope of this approach.}

\keywords{joinings, disjointness, metric measure-preserving systems, distance arrays, Gromov reconstruction, weak mixing}

\pacs[MSC Classification]{37A05, 37A35, 37A25, 28A33, 60G09}

\maketitle

\section{Introduction}

Joinings are among the central objects of ergodic theory, beginning with
Furstenberg's original formulation of disjointness
\cite{furstenberg1967disjointness,furstenberg1981recurrence,glasner2003ergodic}.
If
\[
  \Xbf=(X,\mathcal B_X,\mu,T),\qquad
  \Ybf=(Y,\mathcal B_Y,\nu,S)
\]
are probability-preserving systems, a joining is a probability measure on
$X\times Y$ with marginals $\mu,\nu$ that is invariant under $T\times S$.
The systems are disjoint, in the sense of Furstenberg, when the only joining
is the product measure $\mu\otimes\nu$.  The force of this definition is that
it is global.  It asks whether two systems admit any stationary coupling beyond
independence, not whether a particular finite statistic happens to decorrelate.
This global viewpoint is one reason joining and self-joining methods have
become central in the structure theory of measure-preserving systems, from
classical disjointness theory to rigidity and characteristic-factor phenomena
\cite{furstenberg1977diagonal,rudolph1979minimal,ratner1991raghunathan,host2005nonconventional}.

This global nature creates the basic tension of the paper.  In a compact
metric model, the most natural finite observations are not arbitrary Borel
sets; they are orbit segments and metric relations among orbit points.  A
joining is an invariant measure on a product space, but the visible data in a
metric model consist of finite distance patterns.  The question we address is:
to what extent can joining theory, and in particular disjointness, be recovered
from such finite metric shadows?  This is a measure-theoretic reconstruction
problem from stationary metric observations.  It is philosophically adjacent
to finite-observation problems in smooth dynamics, such as delay-coordinate
embedding theorems \cite{takens1981detecting}, but the object reconstructed
here is not a topological attractor; it is a joining, hence a stationary
coupling modulo null sets.

There is an immediate obstruction.  The distance matrix of one orbit segment
may contain no phase information.  For an ergodic rotation on a compact
abelian group with a translation-invariant metric, the single-orbit distance
matrix is deterministic; it cannot distinguish the product joining from graph
joinings, or even distinguish different graph joinings.  Thus the observable
unit cannot be a single orbit.  One has to compare several independently
sampled orbit segments at once.  The passage from one orbit to many particles
is the essential step.

Given a joining $\lambda$ of compact metric measure-preserving systems
$(X,d_X,\mu,T)$ and $(Y,d_Y,\nu,S)$, we sample
$(x_i,y_i)_{i=1}^n$ independently from $\lambda$ and record all distances
between the length-$m$ orbit segments in each coordinate:
\[
  \Dcal^X_{n,m}
  =
  \bigl(d_X(T^a x_i,T^b x_j)\bigr)_{
    1\leq i,j\leq n,\ 0\leq a,b<m},
\]
and
\[
  \Dcal^Y_{n,m}
  =
  \bigl(d_Y(S^a y_i,S^b y_j)\bigr)_{
    1\leq i,j\leq n,\ 0\leq a,b<m}.
\]
The law
\[
  \Phi_{n,m}(\lambda)
  =
  \Law_{\lambda^{\otimes n}}(\Dcal^X_{n,m},\Dcal^Y_{n,m})
\]
is the finite distance-array projection of the joining.  These projections are
unanchored: they remember only the intrinsic metric relations among the sampled
orbit points, not the names of the points in a fixed model.

The reason these arrays are faithful enough is conceptual rather than
accidental.  They are finite-dimensional distributions of an exchangeable
array whose edge colors are the orbit-distance kernels
\[
  K_X^{a,b}((x,y),(x',y'))=d_X(T^a x,T^b x'),\qquad
  K_Y^{a,b}((x,y),(x',y'))=d_Y(S^a y,S^b y'),
\]
and whose vertex marks are the same-particle orbit distances.  Thus the
problem sits at the intersection of two reconstruction theories: the
Gromov--Vershik reconstruction of metric measure spaces
\cite{gromov1999metric,vershik2004random,sturm2006geometry,greven2009convergence,memoli2011gromov}
and the metric-dynamical perspective on matrix distributions
\cite{vershik2023dynamics}, together with the
Aldous--Hoover--Kallenberg theory of exchangeable arrays
\cite{aldous1981representations,hoover1979relations,kallenberg2005probabilistic}.
In modern combinatorics, the same pure-function reconstruction principle is
one of the engines behind graph limit theory and graphons
\cite{lovasz2006limits,borgs2008moments,diaconis2008graph,lovasz2012large}; our arrays are continuous-valued,
directed, marked and countably colored analogues forced by orbit geometry.
We use the corresponding measurable pure-function reconstruction theorem in a
marked and colored form.  The unavoidable price is the familiar one:
unanchored data reconstruct only the twin-free intrinsic quotient, unless a
purity hypothesis is imposed.

\subsection*{Main results}
The first main result is an intrinsic reconstruction theorem.  Equality of all
finite unanchored distance-array laws for two joinings implies that the
associated marked colored orbit-distance kernel spaces have isomorphic
twin-free quotients.  If the joinings are distance-array pure, this gives a
kernel-preserving isomorphism of the joining spaces.  In support-reduced
compact metric models, such an isomorphism splits into coordinate
metric-dynamical automorphisms.  Thus the unanchored theory reconstructs the
joining precisely up to the symmetries that unanchored metric data cannot see.

The second main result is the fixed-model version.  We add a countable dense
family of anchors in each support and record distances from orbit points to
these anchors.  The resulting anchored hierarchy is faithful: if two joinings
have the same anchored distance-array laws at all orders, then they are equal
as measures on the fixed product model.

The third main result is the disjointness criterion.  Furstenberg disjointness
is equivalent, in the anchored fixed-model sense, to finite pattern
independence:
\[
  \J(T,S)=\{\mu\otimes\nu\}
  \quad\Longleftrightarrow\quad
  \widetilde\Phi_{n,m,R}(\J(T,S))
  =
  \{\widetilde\Phi_{n,m,R}(\mu\otimes\nu)\}
  \quad\forall n,m,R.
\]
In words, every stationary coupling has only product shadows in every finite
multi-orbit distance-array projection.  The unanchored formulation gives the
corresponding intrinsic quotient-level statement.

We also introduce Wasserstein dependence coefficients obtained by taking the
largest Wasserstein distance, over all joinings, between a finite
distance-array projection and its product counterpart.  Their anchored
all-order vanishing is equivalent to disjointness.  This quantitative
geometric viewpoint parallels the use of optimal transport and
Kantorovich--Rubinstein duality in probability
\cite{kantorovich1958space,villani2009optimal}, and
also resonates with Ornstein's $\bar d$-metric approach to isomorphism theory,
where orbit names are compared through optimal stationary couplings
\cite{ornstein1970bernoulli,ornstein1974ergodic}.  Recent work on stationary
optimal transport likewise studies optimal couplings under invariance
constraints, equivalently optimal joinings for a prescribed cost
\cite{oconnor2024stationary}.  Departing from these classical setups, the present paper uses this circle of ideas in
a different direction: the transport is not placed directly on symbolic names
or on a fixed cost over the phase space, but on finite observable shadows of
joining-induced distance arrays, and the relevant extremum is over all
joinings as a test for disjointness.  In addition, a finite certificate theorem
shows that weak neighborhoods of the product joining are controlled by
finitely many anchored distance-array statistics.  The examples at the end of
the paper include compact rotations, Bernoulli and reversible Markov shifts,
common factors, Kronecker factors, and the classical characterization of weak
mixing by disjointness from Kronecker systems.

\section{Metric systems, joinings, and distance arrays}

\begin{definition}
A \emph{compact metric measure-preserving system} is a quadruple
\[
  \Xbf=(X,d_X,\mu,T),
\]
where $(X,d_X)$ is compact metric, $\mu$ is a Borel probability measure, and
$T:X\to X$ is a Borel map satisfying $T_\#\mu=\mu$.  It is \emph{invertible}
if $T$ has a measurable inverse modulo $\mu$ and $T^{-1}_\#\mu=\mu$.
\end{definition}

Throughout the paper all probability spaces are standard Borel spaces
completed with respect to the relevant probability measure.  Compactness is
used to avoid moment assumptions; all distance arrays then take values in a
compact cube.  Unless explicitly stated otherwise, factor maps, isomorphisms,
and commutation relations between measure-preserving transformations are
understood modulo null sets.  We use standard Borel facts such as
disintegration, the Doob--Dynkin factorization lemma, and the realization of
measure-algebra isomorphisms modulo null sets; see, for example,
\cite[Ch.~1]{kallenberg2005probabilistic}.

\begin{remark}[Support convention]
\label{rem:support-convention}
When convenient we replace a compact metric model $(X,d,\mu,T)$ by its
measure support $\supp\mu$.  If $T$ is continuous and $T_\#\mu=\mu$, then
\[
  T(\supp\mu)\subset\supp\mu .
\]
Indeed, if $x\in\supp\mu$ and $U$ is an open neighborhood of $Tx$, then
$T^{-1}U$ is an open neighborhood of $x$, so
$\mu(U)=\mu(T^{-1}U)>0$.  Thus $Tx\in\supp\mu$.  After this harmless
restriction the measure has full support; we call such a model
\emph{support-reduced}.  Anchors are always understood to be dense in the
relevant supports; in a support-reduced model this simply means dense in the
ambient compact spaces.
\end{remark}

\begin{lemma}[Distance coordinates generate the support Borel structure]
\label{lem:distance-coordinates-generate}
Let $(X,d)$ be compact, let $\mu$ be a Borel probability measure with full
support, and let $(a_r)_{r\ge1}$ be dense in $X$.  Then
\[
  \iota:X\to\R^\N,\qquad \iota(x)=(d(x,a_r))_{r\ge1},
\]
is continuous and injective.  Consequently it is a Borel isomorphism from $X$
onto its image, and the functions $x\mapsto d(x,a_r)$ generate
$\mathcal B_X$.  Equivalently, all equalities of generated
$\sigma$-algebras below may be read in the completed measure algebra.
\end{lemma}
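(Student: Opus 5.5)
The plan is to prove continuity, then injectivity, then deduce the Borel-isomorphism and generation claims from standard facts about compact spaces.

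\emph{Continuity.} Each coordinate $x\mapsto d(x,a_r)$ is $1$-Lipschitz, so $\iota$ is continuous for the product topology on $\R^\N$.

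\emph{Injectivity.} Suppose $\iota(x)=\iota(x')$. By density, choose a subsequence $a_{r_k}\to x$. Then
\[
  d(x',a_{r_k})=d(x,a_{r_k})\to 0,
\]
so $a_{r_k}\to x'$ as well, and uniqueness of limits gives $x=x'$. Full support of $\mu$ plays no role here, since density of the anchors in $X$ is all that is used. It matters only for reconciling this with the support convention of Remark~\ref{rem:support-convention}, where the anchors are dense in $\supp\mu=X$.

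\emph{Borel isomorphism.} $X$ is compact and $\R^\N$ is Hausdorff, so $\iota$ is a closed map and hence a homeomorphism onto the compact image $\iota(X)$. In particular it is a Borel isomorphism onto $\iota(X)$. No appeal to Lusin--Souslin is needed.

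\emph{Generation of $\mathcal B_X$.} Let $\mathcal G=\sigma\bigl(d(\cdot,a_r):r\ge1\bigr)$. Clearly $\mathcal G\subset\mathcal B_X$. Conversely, $\mathcal G=\iota^{-1}(\mathcal B_{\R^\N})$, because the Borel $\sigma$-algebra of $\R^\N$ is generated by the coordinate maps ($\R^\N$ is second countable). Now take any Borel $B\subset X$. Its image $\iota(B)$ is Borel in $\iota(X)$, and $\iota(X)$ is closed, hence Borel in $\R^\N$. Therefore $B=\iota^{-1}(\iota(B))\in\mathcal G$. The statement about completed measure algebras follows at once, since the two $\sigma$-algebras already coincide before completion.

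There is no real obstacle. The only point needing care is to confirm that the product Borel $\sigma$-algebra on $\R^\N$ equals the one generated by the coordinates, which holds because $\R^\N$ is second countable.
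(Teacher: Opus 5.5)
Your proof is correct and follows the same route as the paper: injectivity comes from density of the anchors, and your contrapositive form ($a_{r_k}\to x$ forcing $a_{r_k}\to x'$) is equivalent to the paper's observation that $d(x',a_{r_k})\to d(x',x)>0$; the Borel isomorphism then comes from the compact-to-Hausdorff homeomorphism argument. Your explicit check that $B=\iota^{-1}(\iota(B))$ lies in $\sigma\bigl(d(\cdot,a_r):r\ge1\bigr)$, using that $\iota(X)$ is closed and that $\mathcal B_{\R^\N}$ is generated by the coordinates, fills in the step the paper compresses into ``hence,'' and you are right that full support of $\mu$ is never used.
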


\begin{proof}
Continuity is immediate.  If $x\ne x'$, choose $r_k$ with $a_{r_k}\to x$.
Then
\[
  d(x',a_{r_k})\to d(x',x)>0,
  \qquad
  d(x,a_{r_k})\to0,
\]
so $\iota(x)\ne\iota(x')$.  A continuous injective map from a compact space
to a Hausdorff space is a homeomorphism onto its image.  Hence the coordinate
functions of $\iota$ generate the Borel structure.
\end{proof}

\begin{definition}
Let $\Xbf=(X,d_X,\mu,T)$ and $\Ybf=(Y,d_Y,\nu,S)$ be compact metric
measure-preserving systems.  A \emph{joining} of $\Xbf$ and $\Ybf$ is a Borel
probability measure $\lambda$ on $X\times Y$ such that
\[
  (\pi_X)_\#\lambda=\mu,\qquad
  (\pi_Y)_\#\lambda=\nu,\qquad
  (T\times S)_\#\lambda=\lambda .
\]
The convex set of joinings is denoted by $\J(T,S)$.  When $T$ and $S$ are
continuous, $\J(T,S)$ is weakly compact.  The systems are \emph{disjoint} if
$\J(T,S)=\{\mu\otimes\nu\}$.
\end{definition}

\begin{definition}
For $\lambda\in\J(T,S)$, integers $n,m\geq1$, and
$z_i=(x_i,y_i)\in X\times Y$, define
\[
  \Dcal^X_{n,m}(z_1,\ldots,z_n)
  =
  \bigl(d_X(T^a x_i,T^b x_j)\bigr)_{
  1\leq i,j\leq n,\ 0\leq a,b<m},
\]
\[
  \Dcal^Y_{n,m}(z_1,\ldots,z_n)
  =
  \bigl(d_Y(S^a y_i,S^b y_j)\bigr)_{
  1\leq i,j\leq n,\ 0\leq a,b<m}.
\]
The \emph{unanchored distance-array projection} of $\lambda$ is
\[
  \Phi_{n,m}(\lambda)
  =
  \Law_{\lambda^{\otimes n}}
  \bigl(\Dcal^X_{n,m},\Dcal^Y_{n,m}\bigr).
\]
\end{definition}

The product joining has projection
\[
  \Phi_{n,m}(\mu\otimes\nu)
  =
  \Law_{\mu^{\otimes n}\otimes\nu^{\otimes n}}
  \bigl(\Dcal^X_{n,m},\Dcal^Y_{n,m}\bigr).
\]
Equivalently, if $\rho^X_{n,m}$ and $\rho^Y_{n,m}$ denote the marginal
finite orbit-distance laws generated by $\mu^{\otimes n}$ and
$\nu^{\otimes n}$, then
\[
  \Phi_{n,m}(\mu\otimes\nu)=\rho^X_{n,m}\otimes\rho^Y_{n,m}.
\]
The shared index set creates no dependence under the product joining because
the entire $X$-sample and the entire $Y$-sample are independent.

\begin{proposition}[Continuity and compactness]
\label{prop:compact}
Assume $T$ and $S$ are continuous.  Then, for each $n,m$, the map
\[
  \lambda\mapsto \Phi_{n,m}(\lambda)
\]
from $\J(T,S)$ with the weak topology to the space of probability measures on
the finite array cube with the weak topology is continuous.  In particular
$\Phi_{n,m}(\J(T,S))$ is compact.
\end{proposition}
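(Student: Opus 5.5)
The plan is to write $\Phi_{n,m}(\lambda)$ as the pushforward of a product measure under a fixed continuous map, and then use two standard facts: weak continuity of $\lambda\mapsto\lambda^{\otimes n}$, and weak continuity of pushforward under continuous maps. Set $Z=X\times Y$, a compact metric space. The array map
\[
  F_{n,m}:Z^n\to[0,\operatorname{diam}X]^{n^2m^2}\times[0,\operatorname{diam}Y]^{n^2m^2},
  \qquad
  F_{n,m}(z_1,\ldots,z_n)=\bigl(\Dcal^X_{n,m},\Dcal^Y_{n,m}\bigr),
\]
is continuous. Each entry is $d_X(T^a x_i,T^b x_j)$, which is a composition of coordinate projections, the continuous iterates $T^a,T^b$, and the continuous metric $d_X$; the $Y$-entries are handled the same way. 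Hence $\Phi_{n,m}(\lambda)=(F_{n,m})_\#\lambda^{\otimes n}$.

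The first step is to show that $\lambda\mapsto\lambda^{\otimes n}$ is weakly continuous from $\mathcal P(Z)$ to $\mathcal P(Z^n)$. Since $Z$ is compact metric, $\mathcal P(Z)$ is metrizable, so sequences suffice. Take $\lambda_k\to\lambda$. For functions of product form $g_1\otimes\cdots\otimes g_n$ with $g_i\in C(Z)$,
\[
  \int g_1\otimes\cdots\otimes g_n\dd\lambda_k^{\otimes n}=\prod_i\int g_i\dd\lambda_k\longrightarrow\prod_i\int g_i\dd\lambda .
\]
Finite linear combinations of such product functions form a unital subalgebra of $C(Z^n)$ that separates points. By Stone--Weierstrass it is uniformly dense. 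All measures involved are probability measures, so uniform approximation transfers the convergence to every $f\in C(Z^n)$.

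The second step is the pushforward. For $h$ continuous on the cube, $h\circ F_{n,m}\in C(Z^n)$, so
\[
  \int h\dd\Phi_{n,m}(\lambda_k)=\int h\circ F_{n,m}\dd\lambda_k^{\otimes n}\to\int h\circ F_{n,m}\dd\lambda^{\otimes n}=\int h\dd\Phi_{n,m}(\lambda).
\]
This gives continuity of $\lambda\mapsto\Phi_{n,m}(\lambda)$ on $\J(T,S)$.

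For compactness, I would use the fact stated after the definition of joinings that $\J(T,S)$ is weakly compact when $T$ and $S$ are continuous. If a short justification is wanted: $\mathcal P(Z)$ is compact by Prokhorov/Riesz. The marginal conditions are closed because $\pi_X$ and $\pi_Y$ are continuous. Invariance is closed because $\lambda\mapsto(T\times S)_\#\lambda$ is continuous. The continuous image of a compact set is compact, which gives compactness of $\Phi_{n,m}(\J(T,S))$.

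The only step needing care is the density argument for $\lambda^{\otimes n}$; everything else is routine. The continuity of $T$ and $S$ is used exactly once, in the continuity of $F_{n,m}$. Without it the pushforward step would fail, since weak convergence only controls integrals of continuous functions.
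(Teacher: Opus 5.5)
Your proof is correct and is essentially the paper's argument (continuous array map, weak continuity of $\lambda\mapsto\lambda^{\otimes n}$, pushforward, and compactness of $\J(T,S)$ as a closed subset of a compact set); your Stone--Weierstrass step supplies the product-convergence fact that the paper simply asserts. One small correction to your closing remark: continuity of $T$ and $S$ is also used in the closedness of the invariance constraint $(T\times S)_\#\lambda=\lambda$, which is what gives compactness of $\J(T,S)$.
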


\begin{proof}
The map
$(z_1,\ldots,z_n)\mapsto(\Dcal^X_{n,m},\Dcal^Y_{n,m})$ is continuous on
$(X\times Y)^n$ because the metrics and the finitely many iterates of $T,S$
appearing in the definition are continuous.  If $\lambda_k\to\lambda$ weakly
on $X\times Y$, then $\lambda_k^{\otimes n}\to\lambda^{\otimes n}$ weakly.
Pushing forward by the continuous array map gives
$\Phi_{n,m}(\lambda_k)\to\Phi_{n,m}(\lambda)$.  The joining set is closed in
the compact set of couplings of $\mu,\nu$: the marginal constraints are closed,
and the invariance constraint is closed because
$\lambda\mapsto (T\times S)_\#\lambda$ is weakly continuous when $T\times S$
is continuous.  Hence $\J(T,S)$ is compact, and the image of a compact set
under a continuous map is compact.
\end{proof}

\section{Why single-orbit arrays are not enough}

The multi-particle definition is forced by a simple obstruction.  A single
orbit segment can be blind to the phase information carried by a joining.

\begin{proposition}[Failure of single-orbit distance arrays]
\label{prop:single-fails}
Let $G$ be a compact abelian group with Haar probability $m_G$, let $d$ be a
translation-invariant metric on $G$, and let $T_g x=x+g$ be an ergodic
rotation.  Then for every self-joining $\lambda$ of this rotation and every
$m\geq1$, the pair of single-orbit distance matrices
\[
 \left(
   (d(T_g^a x,T_g^b x))_{0\leq a,b<m},
   (d(T_g^a y,T_g^b y))_{0\leq a,b<m}
 \right)
\]
has the same deterministic law under $(x,y)\sim\lambda$.  In particular,
single-orbit distance arrays do not distinguish the product joining from any
graph joining
\[
  \lambda_h=(x\mapsto(x,x+h))_\#m_G .
\]
\end{proposition}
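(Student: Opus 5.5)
The plan is to compute the single-orbit distance matrix explicitly and see that it does not depend on the sampled point. By translation invariance of $d$,
\[
  d(T_g^a x,T_g^b x)=d(x+ag,x+bg)=d(ag,bg)=d\bigl((a-b)g,0\bigr)
\]
for every $x\in G$, with a constant depending only on $a-b$ and $g$. So for each $m$ the matrix
\[
  M_m:=\bigl(d(ag,bg)\bigr)_{0\le a,b<m}
\]
is one fixed matrix. The same identity in the $y$ coordinate gives $(d(T_g^a y,T_g^b y))_{a,b}=M_m$ for every $y\in G$.

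The pair of single-orbit matrices is therefore the constant map $(x,y)\mapsto(M_m,M_m)$ on all of $G\times G$. For any Borel probability measure $\lambda$ on $G\times G$, the pushforward of $\lambda$ under this constant map is the Dirac mass $\delta_{(M_m,M_m)}$. This holds in particular for every self-joining. Note that neither the marginal conditions, nor invariance, nor ergodicity of the rotation is needed; ergodicity only matters in the statement because it makes the graph joinings genuinely different objects.

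For the last claim, each graph joining $\lambda_h$ is the pushforward of $m_G$ under $x\mapsto(x,x+h)$. Its marginals are both $m_G$, since translation preserves Haar measure. It is invariant under $T_g\times T_g$, since $(x+g,x+h+g)$ is the image of $x+g$ and $m_G$ is $T_g$-invariant. So $\lambda_h$ is a self-joining, and so is $m_G\otimes m_G$. Both therefore give the same law $\delta_{(M_m,M_m)}$.

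There is no real obstacle. The only care needed is to state that the identity holds for every $x$, not merely almost every $x$, so that the law is exactly a Dirac mass regardless of which measure is used.
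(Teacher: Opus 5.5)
Your proof is correct and follows the paper's argument: translation invariance gives $d(T_g^a x,T_g^b x)=d(0,(b-a)g)$ for every $x$ (and likewise for $y$), so the pair of matrices is a constant function on $G\times G$ and every self-joining pushes forward to the same Dirac mass; your explicit check that $\lambda_h$ is a self-joining is a harmless addition that the paper leaves implicit. One minor inaccuracy in your side remark on ergodicity: distinct $h$ give distinct $\lambda_h$ (their supports are disjoint) even for non-ergodic rotations, so ergodicity is not what makes them different, but nothing in your proof depends on that remark.
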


\begin{proof}
For every $x\in G$ and $a,b\geq0$,
\[
 d(T_g^a x,T_g^b x)=d(x+ag,x+bg)=d(0,(b-a)g)
\]
by translation invariance.  The same computation applied to the second
coordinate gives, for every $y\in G$,
\[
 d(T_g^a y,T_g^b y)=d(0,(b-a)g)
\]
Hence the two single-orbit matrices are deterministic for
every pair $(x,y)$, independently of the self-joining used to sample that pair.
\end{proof}

\begin{remark}
The joinings $\lambda_h$ need not be equal, and they are generally non-product.
Thus a single orbit-distance observable can collapse the whole self-joining
simplex to one deterministic pattern.  Multi-particle arrays restore the
missing information because they compare the relative positions of
independently sampled orbit segments.
\end{remark}

\section{Marked colored kernel reconstruction}

We now isolate the reconstruction principle behind the unanchored theory.  The
following language is convenient because the distance-array projections of a
joining are exactly finite laws of a countably marked, countably colored
kernel array.

\begin{definition}
A \emph{countably marked colored kernel probability space} is a tuple
\[
  \Kcal=(\Omega,\mathcal F,\rho,(m_\alpha)_{\alpha\in A_0},(k_c)_{c\in C}),
\]
where $(\Omega,\mathcal F,\rho)$ is a standard probability space,
$A_0$ and $C$ are countable, each mark $m_\alpha:\Omega\to\R$ is bounded and
measurable, and each kernel $k_c:\Omega\times\Omega\to\R$ is bounded and
measurable.
Two such spaces are \emph{kernel-isomorphic} if there is a measure-space
isomorphism $U:\Omega\to\Omega'$ such that for every $\alpha\in A_0$,
\[
  m_\alpha(z)=m'_\alpha(Uz)
\]
for $\rho$-almost every $z$, and for every $c\in C$,
\[
  k_c(z,z')=k'_c(Uz,Uz')
\]
for $\rho\otimes\rho$-almost every $(z,z')$.
\end{definition}

Given $\Kcal$ and an i.i.d. sequence $(Z_i)_{i\geq1}$ with law $\rho$, define
the infinite marked colored array
\[
  \mathbf A_\Kcal=
  \bigl((m_\alpha(Z_i))_{\alpha\in A_0,\ i\ge1},
        (k_c(Z_i,Z_j))_{c\in C,\ i\ne j}\bigr).
\]
Its law is denoted by $\mathfrak A(\Kcal)$.

\begin{definition}
Two points $z,z'\in\Omega$ are \emph{twins} if for every $\alpha\in A_0$ and
every $c\in C$,
\[
  m_\alpha(z)=m_\alpha(z'),
\]
and
\[
  k_c(z,w)=k_c(z',w)
  \quad\text{and}\quad
  k_c(w,z)=k_c(w,z')
\]
for $\rho$-almost every $w$.  The space $\Kcal$ is
\emph{row-column-mark pure} if, after discarding a null set, no two distinct
points are twins.
\end{definition}

\subsection{Measurable kernel reconstruction}
\label{subsec:kernel-reconstruction-background}

We now record the measurable reconstruction result used in the sequel and
explain why it applies exactly to the marked colored kernels above.  The relevant
object in Vershik's terminology is the \emph{matrix distribution} of a
measurable function of two variables: if
$f:(\Omega,\rho)^2\to A$ is a measurable map into a standard Borel space and
$(Z_i)_{i\ge1}$ is i.i.d. with law $\rho$, then the off-diagonal law of
$(f(Z_i,Z_j))_{i\ne j}$ is a jointly exchangeable probability measure on
$A^{\{(i,j):i\ne j\}}$.
The precise external input is Vershik's matrix-distribution reconstruction
theorem for pure functions.  In the later corrected formulation
\cite[Theorem~5]{vershik2012classification}, if $D=D_f$ is the matrix
distribution of a totally pure measurable function, then the function
canonically constructed from a $D$-typical matrix is equivalent to $f$ and has
matrix distribution $D$; moreover the simple measures are exactly the matrix
distributions of totally pure measurable functions.  The original paper
\cite{vershik2002classification} introduced the matrix-distribution formalism
and the classification problem.  We use this theorem only through the
row-column indistinguishability quotient described below.  Thus the
reconstruction below is reconstruction at the level of the canonical
twin-free representative, unless purity is imposed separately.
This result is closely related to the Aldous--Hoover--Kallenberg theory of
jointly exchangeable arrays \cite{aldous1981representations,hoover1979relations,kallenberg2005probabilistic}.

In the graphon literature the same point appears as the uniqueness of
twin-free graphon representatives: the distribution of the random infinite
graph sampled from a graphon determines the graphon up to a measure-preserving
isomorphism after twins are identified
\cite{borgs2008moments,lovasz2012large}.  We use the measurable-function
form rather than the graphon form because our kernels are neither binary nor
symmetric, because we need countably many colors, and because the diagonal
blocks of our distance arrays are one-point marks.  These extensions are
formal: countably many bounded real marks and kernels can be encoded in a
single product-valued kernel, and non-symmetric kernels are handled by using
both row and column twins.

\begin{theorem}[Vershik pure-function reconstruction, form used here]
\label{thm:vershik-pure-function}
Let $(\Omega,\mathcal F,\rho)$ be a standard probability space, let $A$ be a
standard Borel space, and let $F:\Omega\times\Omega\to A$ be measurable.  Let
$(Z_i)_{i\ge1}$ be i.i.d. with law $\rho$, and let
\[
  \mathfrak M(F)=\Law\bigl((F(Z_i,Z_j))_{i\ne j}\bigr)
\]
be the off-diagonal matrix distribution.  Define two points $z,z'$ to be
$F$-twins if
\[
  F(z,w)=F(z',w)
  \quad\text{and}\quad
  F(w,z)=F(w,z')
\]
for $\rho$-almost every $w$.  Vershik's theorem asserts that
$\mathfrak M(F)$ determines the pure representative of $F$, obtained by
quotienting by this row-column indistinguishability relation.  Equivalently,
there is a twin-free quotient $(\Omega_F,\rho_F,F^{\rm tw})$, unique up to
null sets and measure-space isomorphism, with the same off-diagonal matrix
distribution; and if $F$ and $F'$ have the same off-diagonal matrix
distribution, then their twin-free quotients are isomorphic.  That is, there
is a measure-space isomorphism $U:\Omega_F\to\Omega_{F'}$ such that
\[
  F^{\rm tw}(z,z')=(F')^{\rm tw}(Uz,Uz')
\]
for $\rho_F\otimes\rho_F$-almost every $(z,z')$.  In particular, if both
$F$ and $F'$ are twin-free, equality of their matrix distributions implies
isomorphism of the original functions.
\end{theorem}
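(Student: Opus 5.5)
The plan is to derive the statement from two external inputs: Vershik's classification theorem for totally pure measurable functions \cite[Theorem~5]{vershik2012classification}, and, as a cross-check, the uniqueness half of the Aldous--Hoover--Kallenberg representation \cite{kallenberg2005probabilistic}. The real work is to build the twin-free quotient measurably and to check that it is the object Vershik's theorem classifies. As a first reduction, fix a Borel isomorphism of $A$ with a Borel subset of $[0,1]$. Then $F$ may be taken real-valued and bounded without changing either the twin relation or $\mathfrak M(F)$.

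\emph{Step 1: the quotient.} Consider the maps
\[
  z\mapsto F(z,\cdot)\in L^0(\rho),\qquad z\mapsto F(\cdot,z)\in L^0(\rho),
\]
with $L^0$ carrying the Polish topology of convergence in measure. By Fubini these maps are measurable, after a null-set modification, into a Polish space. Two points are twins exactly when they have the same image under the pair map
\[
  \Psi(z)=\bigl(F(z,\cdot),F(\cdot,z)\bigr).
\]
Set $\rho_F=\Psi_\#\rho$, a Borel probability on a Polish space, so $(\Omega_F,\rho_F)$ is a standard probability space. Next define $F^{\rm tw}(\Psi z,\Psi z')=F(z,z')$. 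To see this is well defined almost everywhere, take the measurable disintegration of $\rho$ over $\Psi$. Replacing $z$ by any twin changes $F(z,\cdot)$ only on a $\rho$-null set, and symmetrically in the second slot, so $F(z,z')$ is $\sigma(\Psi)\otimes\sigma(\Psi)$-measurable modulo $\rho\otimes\rho$-null sets. The Doob--Dynkin lemma then produces a measurable $F^{\rm tw}$ on $\Omega_F^2$.

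\emph{Step 2: the quotient is twin-free and has the same matrix distribution.} The equality $\mathfrak M(F^{\rm tw})=\mathfrak M(F)$ holds because $(\Psi Z_i)$ is i.i.d.\ with law $\rho_F$, and $F^{\rm tw}(\Psi Z_i,\Psi Z_j)=F(Z_i,Z_j)$ almost surely for each of the countably many pairs $i\ne j$. For twin-freeness, the row function of $F^{\rm tw}$ at $\Psi z$ is the image of $F(z,\cdot)$, which is itself a coordinate of $\Psi z$. Hence if $\Psi z$ and $\Psi z'$ are twins for $F^{\rm tw}$, they have equal coordinates and so coincide. This settles existence. Uniqueness of the quotient is the special case $F'=F$ of Step 3.

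\emph{Step 3: isomorphism of quotients.} This is the main obstacle. Suppose $\mathfrak M(F)=\mathfrak M(F')$. I would first apply Vershik's theorem to the pure functions $F^{\rm tw}$ and $(F')^{\rm tw}$; they have equal matrix distributions by Step 2. The theorem says that the canonical function built from a typical matrix of this common distribution is equivalent to each of them. Composing the two equivalences gives the measure-space isomorphism $U$ with
\[
  F^{\rm tw}(z,z')=(F')^{\rm tw}(Uz,Uz')\qquad\text{for }\rho_F\otimes\rho_F\text{-a.e. }(z,z').
\]

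The delicate point is matching hypotheses. Vershik's notion of ``totally pure'' must be checked to coincide with our almost-everywhere row-column twin-freeness, and it must allow non-symmetric kernels. If the match fails, I would fall back on the Aldous--Hoover--Kallenberg uniqueness theorem. Two representations of the same exchangeable array differ by measure-preserving maps of the auxiliary uniform variables. For functions that carry no extra randomness and are twin-free, the row-column profile $\Psi$ is recoverable from the array via the law of large numbers applied to row and column empirical measures. That forces the coupling maps to descend to a bijection $U$ between the quotients.

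The final sentence of the statement is immediate. If $F$ is twin-free, then $\Psi$ is injective off a null set, so $F$ is isomorphic to $F^{\rm tw}$; the same holds for $F'$, and Step 3 then applies.
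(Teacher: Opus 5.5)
The paper gives no proof of this statement. It records it as an external input and justifies it only by citing Vershik's Theorem~5. So your Step~3 is the paper's entire argument, and on the substantive point (twin-free representatives with equal matrix distributions are isomorphic) your route and the paper's coincide.

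What you add is a self-contained existence half:
\begin{itemize}
\item realizing the quotient as $\Psi_\#\rho$ on $L^0(\rho)\times L^0(\rho)$;
\item checking $\mathfrak M(F^{\rm tw})=\mathfrak M(F)$ and twin-freeness;
\item deducing that a twin-free $F$ is isomorphic to its quotient, from injectivity of $\Psi$ off a null set.
\end{itemize}
This gives an explicit quotient map $q=\Psi$, which the paper uses without constructing it in the proof of Theorem~\ref{thm:kernel-reconstruction}. It also narrows the cited input to the bare isomorphism statement.

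Two small repairs are needed.
\begin{itemize}
\item In Step~1, the fact that replacing $z$ by a twin changes $F(z,\cdot)$ only off a $z$-dependent null set does not by itself give $\sigma(\Psi)\otimes\sigma(\Psi)$-measurability. Instead, note that $F$ is both $\sigma(\Psi)\otimes\mathcal F$-measurable and $\mathcal F\otimes\sigma(\Psi)$-measurable modulo null sets, using a jointly measurable evaluation map on $L^0(\rho)\times\Omega$. Under $\rho\otimes\rho$ the two corresponding conditional expectations commute, and their product is the conditional expectation onto $\sigma(\Psi)\otimes\sigma(\Psi)$. Alternatively, carry out the fourfold disintegration estimate you allude to.
\item Uniqueness is not the case $F'=F$ of Step~3, which is vacuous. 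It is Step~3 applied to $F$ and a competing twin-free representative $G$, combined with your last paragraph.
\end{itemize}

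The hypothesis-matching worry you raise is real, and the paper does not resolve it either. Its remark only asserts that non-symmetric, jointly sampled kernels are covered.

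Your Aldous--Hoover--Kallenberg fallback is only a sketch. The equivalence maps there a priori act on vertex and edge variables together. Showing that, for twin-free kernels with no extra randomness, they descend to a single measure-preserving bijection is exactly the content of twin-free uniqueness theorems such as the Borgs--Chayes--Lov\'asz theorem for graphons. Row and column empirical measures only recover the distribution of a row or column, not its class in $L^0(\rho)$, so the law-of-large-numbers remark does not deliver this on its own.
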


\begin{remark}[Applicability to the present kernels]
\label{rem:vershik-applicability}
Theorem~\ref{thm:vershik-pure-function} is the two-variable case of Vershik's
classification of measurable functions by their matrix distributions
\cite{vershik2002classification,vershik2012classification}.  We use it only in
the following reduced form.  The target $A$ may be any standard Borel space;
in our application it is compact metrizable.  Countably many real-valued marks
and colors are encoded in one product-valued map
\[
  F_\Kcal(z,z')=(m(z),m(z'),k(z,z')),
\]
where $m(z)$ is the vector of vertex marks and $k(z,z')$ is the vector of edge
colors.  Non-symmetric kernels are covered because the twin relation uses both
rows and columns.  The diagonal entries of our distance arrays are not treated
as values of $F$ on the diagonal; they are included as one-point marks in
$m(z)$.  Thus the theorem applies exactly to the marked colored kernel spaces
used below.  The verification of this reduction is included in the proof of
Theorem~\ref{thm:kernel-reconstruction}.
\end{remark}

\begin{theorem}[Measurable marked colored-kernel reconstruction]
\label{thm:kernel-reconstruction}
Let
\[
  \Kcal=(\Omega,\mathcal F,\rho,(m_\alpha)_{\alpha\in A_0},(k_c)_{c\in C})
\]
and
\[
  \Kcal'=(\Omega',\mathcal F',\rho',(m'_\alpha)_{\alpha\in A_0},(k'_c)_{c\in C})
\]
be countably marked colored kernel probability spaces with bounded marks and
kernels.  There is a natural, modulo null sets, row-column-mark pure marked colored kernel
space $\Kcal^{\rm tw}$, called the
\emph{twin-free quotient} of $\Kcal$, with the following properties:
\begin{enumerate}
\item $\mathfrak A(\Kcal^{\rm tw})=\mathfrak A(\Kcal)$;
\item if $\Kcal$ is row-column-mark pure, then $\Kcal^{\rm tw}$ is
kernel-isomorphic to $\Kcal$;
\item if
\[
  \mathfrak A(\Kcal)=\mathfrak A(\Kcal'),
\]
then $\Kcal^{\rm tw}$ and $(\Kcal')^{\rm tw}$ are kernel-isomorphic.
\end{enumerate}
Consequently, two row-column-mark pure marked colored kernel spaces with the
same infinite marked colored array law are kernel-isomorphic.
\end{theorem}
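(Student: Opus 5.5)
The plan is to reduce everything to Theorem~\ref{thm:vershik-pure-function} through the encoding described in Remark~\ref{rem:vershik-applicability}. Fix enumerations of $A_0$ and $C$. Since the marks and kernels are bounded, we can take $A=\R^{A_0}\times\R^{A_0}\times\R^{C}$, which is standard Borel (in fact Polish). We then define
\[
  F_\Kcal(z,z')=\bigl(m(z),m(z'),k(z,z')\bigr),\qquad m(z)=(m_\alpha(z))_{\alpha\in A_0},\quad k(z,z')=(k_c(z,z'))_{c\in C}.
\]

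\textbf{Step 1: the two laws determine each other.} We show that $\mathfrak A(\Kcal)$ and $\mathfrak M(F_\Kcal)$ are images of one another under fixed Borel maps. One direction is immediate: the off-diagonal array $(F_\Kcal(Z_i,Z_j))_{i\ne j}$ is a Borel function of $\mathbf A_\Kcal$, since each entry only reassembles $m(Z_i)$, $m(Z_j)$ and $k(Z_i,Z_j)$. Conversely, $m(Z_i)$ can be read off as the first component of $F_\Kcal(Z_i,Z_j)$ for any fixed $j\ne i$, say $j=i+1$. Hence $\mathfrak A(\Kcal)=\mathfrak A(\Kcal')$ if and only if $\mathfrak M(F_\Kcal)=\mathfrak M(F_{\Kcal'})$.

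\textbf{Step 2: the two twin notions agree.} We claim that $F_\Kcal$-twins are exactly the twins in the marked colored sense.
\begin{itemize}
\item If $z,z'$ are marked colored twins, then $F_\Kcal(z,w)=F_\Kcal(z',w)$ and $F_\Kcal(w,z)=F_\Kcal(w,z')$ for a.e.\ $w$. The key point is that countably many null sets, one for each $c\in C$, have null union.
\item If $z,z'$ are $F_\Kcal$-twins, then comparing first components for a single admissible $w$ gives $m(z)=m(z')$. Such a $w$ exists because the exceptional set is null and $\rho$ is a probability measure. The kernel conditions are then read off coordinatewise.
\end{itemize}
Consequently, row-column-mark purity of $\Kcal$ is the same as twin-freeness of $F_\Kcal$.

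\textbf{Step 3: decode the quotient.} Take the twin-free quotient $(\Omega_F,\rho_F,F^{\rm tw})$ from Theorem~\ref{thm:vershik-pure-function}. We must show that $F^{\rm tw}$ again has the structured form $(\tilde m(z),\tilde m(z'),\tilde k(z,z'))$. I expect this to be the main technical obstacle, because the quotient is only given abstractly up to null sets. My first attempt is to realize the quotient concretely, as follows.
\begin{itemize}
\item Let $\pi:\Omega\to\Omega_F$ be the quotient map, so that $F_\Kcal(z,z')=F^{\rm tw}(\pi z,\pi z')$ a.e. This holds because twins have a.e.\ equal rows and columns. Concretely, $\pi$ can be taken as the map $z\mapsto$ (the row and column sections of $F_\Kcal$ at $z$, viewed as elements of $L^0$), which is measurable into a standard Borel space.
\item Marks are constant on twin classes by Step 2, so $m=\tilde m\circ\pi$ for a Borel $\tilde m$ by Doob--Dynkin.
\item The first and second components of $F^{\rm tw}(u,v)$ are then a.e.\ equal to $\tilde m(u)$ and $\tilde m(v)$, and we set $\tilde k$ to be the third component.
\end{itemize}
Thus $\Kcal^{\rm tw}=(\Omega_F,\rho_F,\tilde m,\tilde k)$ is a marked colored kernel space with bounded marks and kernels, and Step 2 applied to it shows that it is row-column-mark pure.

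\textbf{Step 4: the three items and the corollary.}
\begin{itemize}
\item Item (1) follows from Step 1 together with $\mathfrak M(F^{\rm tw})=\mathfrak M(F_\Kcal)$.
\item Item (2) holds because, if $\Kcal$ is pure, $\pi$ is injective off a null set. It is therefore a measure-space isomorphism, since injective Borel maps between standard spaces are Borel isomorphisms onto their images, and it intertwines marks and kernels by construction.
\item For item (3), Step 1 gives $\mathfrak M(F_\Kcal)=\mathfrak M(F_{\Kcal'})$. Theorem~\ref{thm:vershik-pure-function} then gives a measure-space isomorphism $U$ with $F^{\rm tw}=(F')^{\rm tw}\circ(U\times U)$ a.e. Reading off components and applying Fubini yields $\tilde m=\tilde m'\circ U$ a.e.\ and $\tilde k_c=\tilde k'_c\circ(U\times U)$ a.e. for each $c$, so $U$ is a kernel isomorphism.
\end{itemize}
The final "consequently" statement combines (2) and (3).
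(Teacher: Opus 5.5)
Your proposal is correct and follows the paper's proof. Both arguments encode all marks and colors in $F_\Kcal(z,z')=(m(z),m(z'),k(z,z'))$, check that the marked colored array law and the off-diagonal matrix distribution of $F_\Kcal$ determine each other, identify $F_\Kcal$-twins with row-column-mark twins, read the quotient marks and kernels off as coordinate projections of $F_\Kcal^{\rm tw}$, and then apply Theorem~\ref{thm:vershik-pure-function} to obtain items (1)--(3).

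Your one addition is the concrete realization of the quotient map through row and column sections in $L^0$; the paper does not attempt this and simply uses the quotient map supplied by Theorem~\ref{thm:vershik-pure-function}. If you keep it, the a.e.\ factorization $F_\Kcal=F^{\rm tw}\circ(\pi\times\pi)$ needs a genuine joint-measurability argument (for instance, commuting conditional expectations under $\rho\otimes\rho$), not just the remark that twins have a.e.\ equal rows and columns.
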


\begin{proof}
We reduce the marked colored statement to
Theorem~\ref{thm:vershik-pure-function}.  Since $A_0$ and $C$ are countable
and all marks and kernels are bounded, choose compact intervals
$J_\alpha,I_c\subset\R$ such that $m_\alpha(\Omega)\subset J_\alpha$ and
$k_c(\Omega^2)\subset I_c$ modulo null sets.  Set
\[
  A=\left(\prod_{\alpha\in A_0}J_\alpha\right)^2
    \times \prod_{c\in C} I_c .
\]
The product $A$ is compact metrizable and therefore standard Borel.  Define
the product-valued measurable kernel
\[
  F_\Kcal(z,z')=
  \bigl((m_\alpha(z))_{\alpha\in A_0},
        (m_\alpha(z'))_{\alpha\in A_0},
        (k_c(z,z'))_{c\in C}\bigr)\in A .
\]
The off-diagonal $A$-valued matrix
\[
  (F_\Kcal(Z_i,Z_j))_{i\ne j}
\]
contains exactly the same information as the collection of vertex marks and
off-diagonal colored matrices
\[
  (m_\alpha(Z_i))_{\alpha\in A_0,\ i\ge1},
  \qquad
  (k_c(Z_i,Z_j))_{c\in C,\ i\ne j}.
\]
Indeed, each mark $m_\alpha(Z_i)$ appears in every off-diagonal entry
$F_\Kcal(Z_i,Z_j)$ with $j\ne i$, and the coordinate projections of $A$ recover
all marks and all colors.  Conversely, the off-diagonal $A$-valued matrix is
obtained from the marked colored array by applying these coordinate maps.
Thus the marked colored array law and the off-diagonal $A$-valued matrix
distribution determine one another.

The $F_\Kcal$-twin relation is precisely the row-column-mark twin relation.
Indeed,
\[
  F_\Kcal(z,w)=F_\Kcal(z',w)
\]
implies both $m_\alpha(z)=m_\alpha(z')$ for every mark and
$k_c(z,w)=k_c(z',w)$ for every color; the corresponding column equality gives
$k_c(w,z)=k_c(w,z')$ for every color.  Conversely, these mark, row, and column
equalities imply equality of the product-valued rows and columns.  Hence
Theorem~\ref{thm:vershik-pure-function} gives the twin-free quotient
$(\Omega_F,\rho_F,F_\Kcal^{\rm tw})$ of the product-valued function.
Equivalently, if $q:\Omega\to\Omega_F$ is the quotient map, define
\[
  m_\alpha^{\rm tw}(qz)=m_\alpha(z),
  \qquad
  k_c^{\rm tw}(qz,qz')=k_c(z,z')
\]
outside the null sets on which the quotient is not represented.  The preceding
identification of the twin relation shows that these definitions are
well-defined modulo $\rho_F$ and $\rho_F\otimes\rho_F$.  These marks and
kernels are the coordinate projections of $F_\Kcal^{\rm tw}$, and they define
$\Kcal^{\rm tw}$.  Since the marked colored array is a measurable function of
the off-diagonal $A$-valued matrix, the equality
\[
  \mathfrak M(F_\Kcal^{\rm tw})=\mathfrak M(F_\Kcal)
\]
implies $\mathfrak A(\Kcal^{\rm tw})=\mathfrak A(\Kcal)$.

If $\Kcal$ is already row-column-mark pure, the quotient map in
Theorem~\ref{thm:vershik-pure-function} is one-to-one modulo null sets, so
$\Kcal^{\rm tw}$ is kernel-isomorphic to $\Kcal$.  If
$\mathfrak A(\Kcal)=\mathfrak A(\Kcal')$, then the marked colored array laws
agree, so the $A$-valued matrix distributions of
$F_\Kcal$ and $F_{\Kcal'}$ agree.  Theorem~\ref{thm:vershik-pure-function}
gives an isomorphism between the twin-free $A$-valued representatives, and
applying the coordinate projections of $A$ says exactly that every mark and
every colored kernel is preserved.  This proves all assertions.

This also explains the role of diagonal entries in the distance arrays.  The
Vershik theorem reconstructs the off-diagonal kernel up to product-a.e.
equality, while the same-particle diagonal blocks in this paper are not treated
as kernel diagonal values.  They are exactly the one-point marks
$m_\alpha(Z_i)$ and are therefore preserved by the marked reconstruction.
\end{proof}

\section{Unanchored reconstruction of joining projections}

For a joining $\lambda\in\J(T,S)$ define the associated marked colored kernel
space
\[
  \Kcal_\lambda
  =
  \bigl(X\times Y,\lambda,
    (M_X^{a,b})_{a,b\ge0},(M_Y^{a,b})_{a,b\ge0},
    (K_X^{a,b})_{a,b\ge0},(K_Y^{a,b})_{a,b\ge0}\bigr),
\]
where
\[
  M_X^{a,b}(x,y)=d_X(T^a x,T^b x),\qquad
  M_Y^{a,b}(x,y)=d_Y(S^a y,S^b y),
\]
and
\[
  K_X^{a,b}((x,y),(x',y'))=d_X(T^a x,T^b x'),\qquad
  K_Y^{a,b}((x,y),(x',y'))=d_Y(S^a y,S^b y').
\]
The marks are the same-particle orbit-distance blocks in $\Phi_{n,m}$; the
kernels are evaluated on distinct sampled particles.

\begin{definition}
The joining $\lambda$ is \emph{distance-array pure} if the marked colored
kernel space $\Kcal_\lambda$ is row-column-mark pure.  Its twin-free quotient is denoted by
$\Kcal_\lambda^{\rm tw}$.
\end{definition}

\begin{theorem}[Intrinsic reconstruction of joining kernels]
\label{thm:joining-kernel-reconstruction}
Let $\lambda,\lambda'\in\J(T,S)$.  If
\[
  \Phi_{n,m}(\lambda)=\Phi_{n,m}(\lambda')
  \qquad\text{for every } n,m\ge1,
\]
then $\Kcal_\lambda^{\rm tw}$ and $\Kcal_{\lambda'}^{\rm tw}$ are
kernel-isomorphic.  If, in addition, $\lambda$ and $\lambda'$ are
distance-array pure, then there is a measure-space isomorphism
\[
  U:(X\times Y,\lambda)\to(X\times Y,\lambda')
\]
such that for all $a,b\ge0$,
\[
  M_X^{a,b}(z)=M_X^{a,b}(Uz),\qquad
  M_Y^{a,b}(z)=M_Y^{a,b}(Uz)
\]
for $\lambda$-almost every $z$, and
\[
  K_X^{a,b}(z,z')=K_X^{a,b}(Uz,Uz'),\qquad
  K_Y^{a,b}(z,z')=K_Y^{a,b}(Uz,Uz')
\]
for $\lambda\otimes\lambda$-almost every $(z,z')$.
\end{theorem}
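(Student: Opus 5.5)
The plan is to reduce the statement to Theorem~\ref{thm:kernel-reconstruction} applied to $\Kcal=\Kcal_\lambda$ and $\Kcal'=\Kcal_{\lambda'}$. These are countably marked colored kernel spaces. The index sets $A_0=C=\{X,Y\}\times\N^2$ are countable. The underlying space $(X\times Y,\lambda)$ is standard. All marks and kernels are bounded by $\max(\operatorname{diam}X,\operatorname{diam}Y)$ by compactness, and they are measurable because $T,S$ are Borel and the metrics are continuous. So the only real work is to show that the hypothesis $\Phi_{n,m}(\lambda)=\Phi_{n,m}(\lambda')$ for all $n,m$ implies $\mathfrak A(\Kcal_\lambda)=\mathfrak A(\Kcal_{\lambda'})$.

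For this I will identify finite-dimensional marginals. Sample $Z_i=(x_i,y_i)$ i.i.d.\ from $\lambda$. The entries of $(\Dcal^X_{n,m},\Dcal^Y_{n,m})(Z_1,\dots,Z_n)$ with $i=j$ are exactly the marks $M_X^{a,b}(Z_i)$ and $M_Y^{a,b}(Z_i)$ with $a,b<m$. The entries with $i\ne j$ are exactly the kernels $K_X^{a,b}(Z_i,Z_j)$ and $K_Y^{a,b}(Z_i,Z_j)$ with $a,b<m$. Hence $\Phi_{n,m}(\lambda)$ is, up to a fixed coordinate relabelling independent of $\lambda$, the joint law of the restriction of $\mathbf A_{\Kcal_\lambda}$ to vertices $i\le n$ and colors/marks with indices $a,b<m$. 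Every finite set of coordinates of $\mathbf A_{\Kcal_\lambda}$ lies in one such block for $n,m$ large. Since the array takes values in a countable product of compact intervals, its law is determined by its finite-dimensional marginals (Kolmogorov, or a $\pi$-$\lambda$ argument on cylinder sets). So equality of all $\Phi_{n,m}$ gives $\mathfrak A(\Kcal_\lambda)=\mathfrak A(\Kcal_{\lambda'})$. Theorem~\ref{thm:kernel-reconstruction}(3) then yields that $\Kcal_\lambda^{\rm tw}$ and $\Kcal_{\lambda'}^{\rm tw}$ are kernel-isomorphic, which is the first assertion.

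For the second assertion, assume both joinings are distance-array pure, i.e.\ $\Kcal_\lambda$ and $\Kcal_{\lambda'}$ are row-column-mark pure. By Theorem~\ref{thm:kernel-reconstruction}(2) we have kernel isomorphisms $\Kcal_\lambda\cong\Kcal_\lambda^{\rm tw}$ and $\Kcal_{\lambda'}^{\rm tw}\cong\Kcal_{\lambda'}$. Composing these with the isomorphism from the first part gives a measure-space isomorphism $U:(X\times Y,\lambda)\to(X\times Y,\lambda')$. Because each of the three maps preserves every mark a.e.\ and every kernel $\rho\otimes\rho$-a.e., so does the composition. Here one checks only that composites of measure-preserving maps carry null sets to null sets, both in the single and in the product space. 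Reading off the coordinates $M_X^{a,b},M_Y^{a,b},K_X^{a,b},K_Y^{a,b}$ gives exactly the displayed identities. This is the ``Consequently'' clause of Theorem~\ref{thm:kernel-reconstruction}.

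The main point needing care is the bookkeeping in the first step. The diagonal blocks $i=j$ of the distance arrays must go to marks and not to kernel diagonal values, and one must check that the identification with the marginals of $\mathbf A_{\Kcal_\lambda}$ is the same fixed measurable bijection for both $\lambda$ and $\lambda'$. Both points hold because the index pattern $(i,j,a,b)$ alone determines which mark or kernel an entry is. I do not expect other obstacles, since the deep input is entirely contained in Theorem~\ref{thm:kernel-reconstruction}.
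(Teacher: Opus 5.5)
Your proposal is correct and follows essentially the same route as the paper: you send the diagonal blocks $i=j$ to the marks and the off-diagonal blocks to the colored kernels, deduce $\mathfrak A(\Kcal_\lambda)=\mathfrak A(\Kcal_{\lambda'})$ from agreement of all finite-dimensional marginals, and then invoke Theorem~\ref{thm:kernel-reconstruction}, including its final assertion about pure spaces for the second part. Your extra bookkeeping (boundedness, measurability, cylinder-set determination of the law, and composing and inverting the kernel isomorphisms) only makes explicit what the paper's proof leaves implicit.
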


\begin{proof}
The family of finite laws $\Phi_{n,m}(\lambda)$ determines exactly the
finite-dimensional distributions of the infinite marked colored array generated
by $\Kcal_\lambda$.  Indeed, any cylinder event in the infinite array involves
only finitely many sampled particles and finitely many colors and marks, hence
is contained in the array recorded by $\Phi_{n,m}$ for $n$ and $m$ large
enough.  In these finite arrays the diagonal blocks $i=j$ encode the vertex
marks $M_X^{a,b}$ and $M_Y^{a,b}$, while the off-diagonal blocks $i\ne j$
encode the colored kernels $K_X^{a,b}$ and $K_Y^{a,b}$.  Equality for all
$n,m$ therefore implies
$\mathfrak A(\Kcal_\lambda)=\mathfrak A(\Kcal_{\lambda'})$.  The twin-free quotient
claim follows from Theorem~\ref{thm:kernel-reconstruction}.  If both spaces are
already row-column-mark pure, the last assertion is the final assertion of that
theorem.
\end{proof}

\begin{remark}
The equality above uses the full distance arrays, including the entries with
$i=j$.  In the marked-kernel formulation these same-particle entries are the
vertex marks $M_X^{a,b}$ and $M_Y^{a,b}$, while the entries with $i\ne j$ are
the edge kernels $K_X^{a,b}$ and $K_Y^{a,b}$.  Thus a kernel-isomorphism of
marked colored spaces preserves the full finite laws $\Phi_{n,m}$, not merely
their off-diagonal parts.
\end{remark}

The preceding theorem reconstructs the joining as an intrinsic marked colored kernel
space.  To identify the ambiguity with familiar coordinate automorphisms one
needs a genuine coordinate-generation condition.

\begin{definition}
Let $\lambda\in\J(T,S)$.  The \emph{$X$-orbit-distance factor} of
$(X\times Y,\lambda)$ is the completed sub-$\sigma$-algebra
$\mathcal F_X^\lambda\subset\mathcal B(X\times Y)$ generated by the
$X$-colored kernels in either variable: it is the smallest complete
sub-$\sigma$-algebra $\mathcal G$ such that, for every $a,b\ge0$, the function
$K_X^{a,b}$ is both
$\mathcal G\otimes\mathcal B(X\times Y)$-measurable and
$\mathcal B(X\times Y)\otimes\mathcal G$-measurable modulo
$\lambda\otimes\lambda$.  Define $\mathcal F_Y^\lambda$ analogously.  We say
that $\lambda$ has \emph{coordinate-generated orbit-distance factors} if
\[
  \mathcal F_X^\lambda=\pi_X^{-1}\mathcal B_X,
  \qquad
  \mathcal F_Y^\lambda=\pi_Y^{-1}\mathcal B_Y
\]
modulo $\lambda$.
\end{definition}

\begin{lemma}[Full support generates the coordinate factors]
\label{lem:full-support-generation}
In a support-reduced model, every joining $\lambda\in\J(T,S)$ has
coordinate-generated orbit-distance factors.
\end{lemma}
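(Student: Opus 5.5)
We show the two inclusions $\mathcal F_X^\lambda\subset\pi_X^{-1}\mathcal B_X$ and $\pi_X^{-1}\mathcal B_X\subset\mathcal F_X^\lambda$ modulo $\lambda$; the $Y$-statement is symmetric.

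\emph{First inclusion.} Put $\mathcal G=\pi_X^{-1}\mathcal B_X$, completed. For every $a,b\ge0$ the function
\[
K_X^{a,b}((x,y),(x',y'))=d_X(T^a x,T^b x')
\]
is a Borel function of $(x,x')$. It is therefore $\mathcal G\otimes\mathcal B(X\times Y)$-measurable, and likewise $\mathcal B(X\times Y)\otimes\mathcal G$-measurable. By minimality in the definition, $\mathcal F_X^\lambda\subset\mathcal G$.

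\emph{Second inclusion.} Let $\mathcal G$ be any complete sub-$\sigma$-algebra for which $K_X^{0,0}$ is $\mathcal G\otimes\mathcal B(X\times Y)$-measurable modulo $\lambda\otimes\lambda$. The goal is to show that $z\mapsto d_X(\pi_X z,a)$ is $\mathcal G$-measurable for a dense set of points $a\in X$. Lemma~\ref{lem:distance-coordinates-generate} then gives $\pi_X^{-1}\mathcal B_X\subset\mathcal G$. This uses full support of $\mu$, which holds in a support-reduced model.

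To obtain these functions we freeze the second variable. Take a jointly measurable version $\tilde K$ of $K_X^{0,0}$ that is $\mathcal G\otimes\mathcal B$-measurable and agrees with $K_X^{0,0}$ off a $\lambda\otimes\lambda$-null set $N$. By Fubini, for $\lambda$-a.e.\ $w=(x',y')$ two things hold:
\begin{itemize}
\item the section $z\mapsto\tilde K(z,w)$ is $\mathcal G$-measurable;
\item this section equals $d_X(\pi_X z,x')$ for $\lambda$-a.e.\ $z$.
\end{itemize}
Since $\mathcal G$ is complete, $z\mapsto d_X(\pi_X z,x')$ is $\mathcal G$-measurable for every $x'$ in a set $E\subset X$ of full $\mu$-measure. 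This is the point where $(\pi_X)_\#\lambda=\mu$ is used.

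A set of full measure under a fully supported measure is dense. Indeed, every nonempty open set has positive $\mu$-measure, so it meets $E$. Choose a countable dense sequence $(a_r)$ inside $E$, which is possible because $X$ is separable metric. Then each $d_X(\pi_X(\cdot),a_r)$ is $\mathcal G$-measurable, and Lemma~\ref{lem:distance-coordinates-generate} gives $\pi_X^{-1}\mathcal B_X\subset\mathcal G$ modulo $\lambda$. Applying this with $\mathcal G=\mathcal F_X^\lambda$ yields the reverse inclusion. Only the kernel $K_X^{0,0}$ is needed here, and only in the first variable.

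The main obstacle is the measure-theoretic bookkeeping in the section argument. "Measurable modulo $\lambda\otimes\lambda$" must be converted into genuine $\mathcal G$-measurability of individual sections. This needs the version $\tilde K$, Fubini applied to the null set $N$, and completeness of $\mathcal G$ to absorb the $\lambda$-null discrepancies. I would spell out that a product-measurable function with respect to $\mathcal G\otimes\mathcal B$ has $\mathcal G$-measurable sections, which follows by a monotone class argument. I would also check that the countable family of exceptional sets stays null.

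Everything else is the density step, which is exactly where support reduction enters. Without full support, $E$ need only be dense in $\supp\mu$. That still suffices modulo $\lambda$, but only on $\supp\mu$, which is why the lemma is stated for support-reduced models.
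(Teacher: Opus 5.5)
Your proof is correct and follows the paper's argument essentially step for step. The easy inclusion comes from minimality, since the $X$-kernels factor through $\pi_X$ in each variable. The reverse inclusion takes Fubini sections of $K_X^{0,0}$ at a $\lambda$-full set of second points, uses full support to make their $X$-projections dense, extracts a countable dense subfamily, and invokes Lemma~\ref{lem:distance-coordinates-generate}; your explicit use of the version $\tilde K$ and of completeness of $\mathcal G$ simply spells out the bookkeeping the paper compresses into ``Fubini gives''.
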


\begin{proof}
By the support convention, $\mu$ and $\nu$ have full support.  We prove the
assertion for $X$; the proof for $Y$ is identical.  All generated
$\sigma$-algebras are understood modulo $\lambda$-null sets, or equivalently
inside the completed measure algebra of $(X\times Y,\lambda)$.  Every
$X$-colored kernel depends on a point $z=(x,y)$ only through its $X$-coordinate
in each variable.  Therefore
\[
  \mathcal F_X^\lambda\subset \pi_X^{-1}\mathcal B_X
\]
modulo $\lambda$.

For the reverse inclusion, use only the color $(0,0)$:
\[
  K_X^{0,0}(z,z')=d_X(x,x').
\]
Since this kernel is
$\mathcal F_X^\lambda\otimes\mathcal B(X\times Y)$-measurable, Fubini gives a
set $E\subset X\times Y$ with $\lambda(E)=1$ such that for every
$z'=(x',y')\in E$ the section
\[
  z\longmapsto d_X(\pi_X z,x')
\]
is $\mathcal F_X^\lambda$-measurable modulo $\lambda$.  The projection
$D=\pi_X(E)$ has $\mu(D)=1$.  Because $\mu$ has full support, $D$ is dense in
$X$.  Choose a countable dense subset $(x_r)_{r\ge1}$ of $D$, and for each
$r$ choose $y_r$ with $(x_r,y_r)\in E$.  Then all functions
$x\mapsto d_X(x,x_r)$, pulled back by $\pi_X$, are
$\mathcal F_X^\lambda$-measurable modulo $\lambda$, because they are sections of
$K_X^{0,0}$ at the points $(x_r,y_r)\in E$.

The map
\[
  x\longmapsto (d_X(x,x_r))_{r\ge1}
\]
is injective on the compact metric space $X$ because $(x_r)$ is dense.  By
Lemma~\ref{lem:distance-coordinates-generate}, these distance functions
generate $\mathcal B_X$.  Pulling back by $\pi_X$ gives
$\pi_X^{-1}\mathcal B_X\subset\mathcal F_X^\lambda$ modulo $\lambda$.  This
proves equality.
\end{proof}

\begin{lemma}[Full-support distance separation]
\label{lem:full-support-separation}
Let $(X,d)$ be compact and let $\mu$ be a Borel probability measure with full
support.  If $u,v:X\to X$ are measurable maps such that
\[
  d(u(x),x')=d(v(x),x')
\]
for $\mu\otimes\mu$-almost every $(x,x')$, then $u(x)=v(x)$ for
$\mu$-almost every $x$.
\end{lemma}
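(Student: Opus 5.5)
The plan is to use Fubini to extract, for almost every $x$, a dense set of test points $x'$ at which the two distance functions agree. We then use continuity of $d(u(x),\cdot)$ and $d(v(x),\cdot)$ to upgrade this agreement to all of $X$, and conclude by evaluating at $x'=u(x)$. Measurability of $u,v$ makes
\[
  E=\{(x,x')\in X\times X:\ d(u(x),x')=d(v(x),x')\}
\]
a Borel set, since it is the preimage of the diagonal under the Borel map $(x,x')\mapsto(d(u(x),x'),d(v(x),x'))$. By hypothesis $(\mu\otimes\mu)(E)=1$. By Fubini there is a Borel set $D\subset X$ with $\mu(D)=1$ such that for every $x\in D$ the section $E_x=\{x':(x,x')\in E\}$ satisfies $\mu(E_x)=1$.

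Fix $x\in D$. Since $\mu$ has full support, every set of full measure is dense: its complement is a null set and so contains no nonempty open set. Hence $E_x$ is dense in $X$. The functions $x'\mapsto d(u(x),x')$ and $x'\mapsto d(v(x),x')$ are continuous (indeed $1$-Lipschitz) and agree on the dense set $E_x$, so they agree on all of $X$. Taking $x'=u(x)$ gives
\[
  d(v(x),u(x))=d(u(x),u(x))=0,
\]
so $u(x)=v(x)$. This holds for every $x\in D$, and $\mu(D)=1$, which proves the lemma.

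No step is a serious obstacle. The only points needing care are the measurability of $E$, so that Fubini applies (alternatively one can work in the completion), and the observation that a full-measure set is dense under full support. The latter replaces the countable dense anchor family used in Lemma~\ref{lem:full-support-generation}. Compactness is not actually needed beyond $X$ being a separable metric space, which is what makes the Borel structure on $X\times X$ the product one.
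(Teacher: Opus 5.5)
Your proposal is correct and follows essentially the same route as the paper's proof: Fubini gives full-measure sections, these are dense by full support, continuity of $x'\mapsto d(u(x),x')$ and $x'\mapsto d(v(x),x')$ extends the equality to all of $X$, and evaluating at $x'=u(x)$ gives $u(x)=v(x)$. Your extra remarks on the Borel measurability of $E$ and on needing only separability rather than compactness are accurate but do not change the argument.
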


\begin{proof}
By Fubini, for $\mu$-almost every $x$ there is a Borel set $D_x\subset X$ with
$\mu(D_x)=1$ such that
\[
  d(u(x),x')=d(v(x),x')\qquad\forall x'\in D_x .
\]
Every full-measure Borel subset of $X$ is dense: if a nonempty open set were
disjoint from it, that open set would have measure zero, contradicting full
support.  Hence $D_x$ is dense.  The functions
$x'\mapsto d(u(x),x')$ and $x'\mapsto d(v(x),x')$ are continuous, so equality
on $D_x$ extends to all $x'\in X$.  Taking $x'=u(x)$ gives
\[
  0=d(u(x),u(x))=d(v(x),u(x)),
\]
and therefore $u(x)=v(x)$.  This holds for $\mu$-almost every $x$.
\end{proof}

\begin{lemma}[Almost-everywhere isometries are automorphisms]
\label{lem:ae-isometry-automorphism}
Let $(X,d)$ be compact and let $\mu$ have full support.  Suppose
$A:X\to X$ is measurable, $A_\#\mu=\mu$, and
\[
  d(Ax,Ax')=d(x,x')
\]
for $\mu\otimes\mu$-almost every $(x,x')$.  Then $A$ is a measure-space
automorphism of $(X,\mu)$.
\end{lemma}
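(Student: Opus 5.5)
The plan is to show that $A$ agrees $\mu$-almost everywhere with a genuine isometry $\tilde A$ of $X$ onto $X$. Such a $\tilde A$ is a homeomorphism, hence a Borel automorphism with Borel inverse, and it preserves $\mu$ because $\tilde A_\#\mu=A_\#\mu=\mu$. This gives the statement modulo null sets, which is the paper's convention for automorphisms.

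\textbf{Step 1 (restrict to a good set).} By Fubini, there is a Borel set $G\subset X$ with $\mu(G)=1$ such that for every $x\in G$ the set
\[
  D_x=\{x' : d(Ax,Ax')=d(x,x')\}
\]
has full measure. Then $G\cap\bigcap_{r} D_{x_r}$ has full measure for any countable family $(x_r)\subset G$. I would choose $(x_r)$ dense in $G$; this is possible since $G$ is dense, because full support makes every full-measure set dense, as in Lemma~\ref{lem:full-support-separation}. I would then set
\[
  G'=G\cap\bigcap_r D_{x_r}.
\]
The key property is that for $x\in G'$ we have $d(Ax,Ax_r)=d(x,x_r)$ for all $r$.

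\textbf{Step 2 (upgrade to an exact isometry).} This is the main obstacle: the almost-everywhere identity only controls pairs in a product-null-complement, not all pairs in $G'$. The remedy is to take a countable dense subset $Q\subset G'$ and iterate the Fubini argument countably many times, shrinking $G'$ so that it is closed under the relevant sections. Concretely, I would define
\[
  H=\{x\in G : \mu(D_x)=1\},
\]
pick $Q=\{q_r\}\subset H$ dense, and let
\[
  H'=H\cap\bigcap_r D_{q_r}.
\]
Then on $Q\times H'$ the identity holds exactly. The map $A|_Q$ is then an isometry of $Q$ into $X$, and it extends uniquely to an isometry $\tilde A:X\to X$ on $\overline Q=X$. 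For $x\in H'$, both $Ax$ and $\tilde Ax$ have the same distances to the dense set $\{\tilde Aq_r\}$ in $\tilde A(X)$. To conclude $Ax=\tilde Ax$, I would use that $Ax\in\tilde A(X)$. That holds for almost every $x$ because $\tilde A(X)$ is compact and $\mu(\tilde A(X))\ge\mu(\tilde A(Q))$. More robustly, since $A_\#\mu=\mu$ has full support, almost every $Ax$ lies in the closure of $A(H')$. Alternatively, I would apply Lemma~\ref{lem:full-support-separation} to $u=A$, $v=\tilde A$: for almost every $x$, $d(Ax,Ax')=d(x,x')=d(\tilde Ax,\tilde Ax')$ for almost every $x'$. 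Since $A_\#\mu=\mu$, the points $Ax'$ range over a set that is dense in $\supp\mu=X$. Continuity in the second slot then gives $d(Ax,w)=d(\tilde Ax,w)$ on a dense set of $w$, hence $Ax=\tilde Ax$.

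\textbf{Step 3 (surjectivity).} A distance-preserving self-map of a compact metric space is surjective. This is the standard argument: otherwise some point $w$ has $\operatorname{dist}(w,\tilde A X)=\varepsilon>0$, and the orbit $(\tilde A^k w)$ is $\varepsilon$-separated, contradicting compactness. So $\tilde A$ is a bijective isometry, hence a homeomorphism. Since $\tilde A=A$ $\mu$-almost everywhere, $\tilde A_\#\mu=\mu$, and $\tilde A^{-1}$ is measure-preserving too. Therefore $A$ is a measure-space automorphism of $(X,\mu)$ modulo null sets.
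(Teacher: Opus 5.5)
There is a gap in Step~2, at exactly the point you flagged as the main obstacle. Your construction only gives exactness on $Q\times H'$: $d(Aq_r,Ax)=d(q_r,x)$ for $x\in H'$. It does not give $d(Aq_r,Aq_s)=d(q_r,q_s)$. Note that $H$ is just $G$ again, and $Q$ is chosen in it \emph{before} $H'$ is formed. A countable set is typically $\mu$-null, so nothing forces $q_s\in D_{q_r}$. Hence ``$A|_Q$ is an isometry'' does not follow, and neither does the existence of $\tilde A$.

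There are two ways to repair this.
\begin{enumerate}
\item Choose $Q$ inductively: $q_{k+1}\in G\cap\bigcap_{j\le k}D_{q_j}\cap B_{k+1}$, where $(B_k)$ is a countable base of nonempty open sets. This is possible because a full-measure set meets every nonempty open set. Since the relation $x'\in D_x$ is symmetric, $A$ is then isometric on $Q$.
\item More directly, show that $A$ is isometric on all of $G$. For $x,y\in G$ put $F=D_x\cap D_y$. It has full measure, so it is dense. $A(F)$ is dense too, because $\mu(\overline{A(F)})=\mu(A^{-1}\overline{A(F)})\ge\mu(F)=1$. Hence
\[
  d(Ax,Ay)=\sup_{z\in F}\bigl|d(Ax,Az)-d(Ay,Az)\bigr|
  =\sup_{z\in F}\bigl|d(x,z)-d(y,z)\bigr|=d(x,y),
\]
and $\tilde A$ is the isometric extension of $A|_G$. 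No $H'$ is needed.
\end{enumerate}

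The end of Step~2 is also confused, though harmlessly so. You do not need $Ax\in\tilde A(X)$. The continuous functions $d(Ax,\cdot)$ and $d(\tilde Ax,\cdot)$ agree on $\{\tilde Aq_r\}$, hence on its closure $\tilde A(X)$. That closure contains $\tilde Ax$, and evaluating there gives $d(Ax,\tilde Ax)=0$.

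The three justifications you offer for $Ax\in\tilde A(X)$ are invalid as written. $\mu(\tilde A(Q))$ is typically $0$, so the first gives nothing. The other two use $Ax'=\tilde Ax'$ for almost every $x'$, which is the statement being proved.

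Step~3 is fine. With these repairs your route differs from the paper's. The paper never builds $\tilde A$. It shows $\sigma(A)=\mathcal B_X$ modulo $\mu$, because each $d(\cdot,y)$ agrees a.e.\ with $d(A\cdot,Ay)$, and then invokes realization of measure-algebra automorphisms on standard spaces. Your argument is more hands-on and gives the stronger conclusion that $A$ is a.e.\ equal to an onto isometry.
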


\begin{proof}
Let $\sigma(A)=A^{-1}\mathcal B_X$ be the sub-$\sigma$-algebra generated by
$A$.  By Fubini, there is a full-measure set $D\subset X$ such that for every
$y\in D$,
\[
  d(Ax,Ay)=d(x,y)
\]
for $\mu$-almost every $x$.  For each such $y$, the function
$x\mapsto d(x,y)$ is $\sigma(A)$-measurable, because it agrees almost
everywhere with $x\mapsto d(Ax,Ay)$.  The set $D$ is dense by full support, so
we may choose a countable dense subset $(y_r)_{r\ge1}$ of $D$.  The map
$x\mapsto(d(x,y_r))_{r\ge1}$ is injective and Borel on the compact space $X$;
hence the functions $d(\cdot,y_r)$ generate $\mathcal B_X$.  Therefore
\[
  \mathcal B_X\subset \sigma(A)
\]
modulo $\mu$.  The reverse inclusion is automatic, so
$\sigma(A)=\mathcal B_X$ modulo $\mu$.

The pullback map $B\mapsto A^{-1}B$ is measure preserving because
$A_\#\mu=\mu$, and the preceding equality says that it is onto the completed
measure algebra.  It is also injective: if $\mu(A^{-1}B)=0$, then
$\mu(B)=0$.  Thus $A$ induces an automorphism of the measure algebra, which is
equivalent on a standard probability space to being a measure-space
automorphism modulo null sets.  Since compact metric probability spaces are
standard Borel, this measure-algebra automorphism induced by $A$ admits a
measurable inverse modulo null sets.
\end{proof}

\begin{definition}
Let $\Aut_{\rm md}(\Xbf)$ be the group of measure-space automorphisms $A$ of
$(X,\mu)$ such that $A\circ T=T\circ A$ modulo $\mu$ and
\[
  d_X(Ax,Ax')=d_X(x,x')
\]
for $\mu\otimes\mu$-almost every $(x,x')$.  Define $\Aut_{\rm md}(\Ybf)$
analogously.  Two joinings $\lambda,\lambda'$ are
\emph{coordinate-isomorphic} if
\[
  \lambda'=(A\times B)_\#\lambda
\]
for some $A\in\Aut_{\rm md}(\Xbf)$ and $B\in\Aut_{\rm md}(\Ybf)$.
\end{definition}

\begin{theorem}[Splitting of kernel isomorphisms]
\label{thm:splitting}
Assume $T,S$ are continuous and the models are support-reduced, and
$\lambda,\lambda'\in\J(T,S)$.  Let
\[
  U:(X\times Y,\lambda)\to(X\times Y,\lambda')
\]
be a kernel-preserving isomorphism between $\Kcal_\lambda$ and
$\Kcal_{\lambda'}$.  Then there are
$A\in\Aut_{\rm md}(\Xbf)$ and $B\in\Aut_{\rm md}(\Ybf)$ such that
\[
  U(x,y)=(Ax,By)
\]
for $\lambda$-almost every $(x,y)$.  Consequently
$\lambda'=(A\times B)_\#\lambda$.
\end{theorem}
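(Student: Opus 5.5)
The plan is to write $U=(U_1,U_2)$ with $U_1=\pi_X\circ U$ and $U_2=\pi_Y\circ U$, and to show that $U_1$ depends only on $x$ and $U_2$ only on $y$. The $X$-coordinate statement follows from the $X$-colored kernels alone, and the $Y$ case is symmetric. Since $U$ pushes $\lambda$ to $\lambda'$ and both are joinings, $(U_1)_\#\lambda=\mu$ and $(U_2)_\#\lambda=\nu$. I expect the factorization step below to be the main obstacle. The subsequent identification of $A$ as an element of $\Aut_{\rm md}(\Xbf)$ is a direct application of Lemmas~\ref{lem:ae-isometry-automorphism} and~\ref{lem:full-support-separation}.

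\emph{Factorization.} Preservation of the color $K_X^{0,0}$ gives
\[
  d_X(U_1z,U_1z')=d_X(x,x')
\]
for $\lambda\otimes\lambda$-a.e.\ $(z,z')$, where $z=(x,y)$ and $z'=(x',y')$. By Fubini there is a $\lambda$-full set $E$ such that for each $z'\in E$ this holds for $\lambda$-a.e.\ $z$. The set $U_1(E)$ has $\mu$-full measure, so by full support it is dense; this is the argument of Lemma~\ref{lem:full-support-generation}, transported through $U$. I will choose $z'_r\in E$ so that the points $a_r=U_1z'_r$ are dense in $X$, and set $g_r(x)=d_X(x,x'_r)$. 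Then $d_X(U_1z,a_r)=g_r(x)$ for $\lambda$-a.e.\ $z$ and all $r$ simultaneously. By Lemma~\ref{lem:distance-coordinates-generate}, $\iota(w)=(d_X(w,a_r))_r$ is a Borel isomorphism onto its image. Hence
\[
  U_1z=\iota^{-1}\bigl((g_r(x))_r\bigr)=:A(x)
\]
$\lambda$-a.e., with $A$ Borel after modification on a $\mu$-null set. Equivalently, $U$ carries $\mathcal F_X^\lambda$ into $\mathcal F_X^{\lambda'}$, and Lemma~\ref{lem:full-support-generation} identifies both with coordinate factors. In the same way $U_2z=B(y)$ for some Borel $B$, so $U(x,y)=(Ax,By)$ almost everywhere.

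\emph{Properties of $A$.} From $(U_1)_\#\lambda=\mu$ we get $A_\#\mu=\mu$. The displayed identity becomes $d_X(Ax,Ax')=d_X(x,x')$ for $\mu\otimes\mu$-a.e.\ $(x,x')$, because the $X$-marginal of $\lambda\otimes\lambda$ is $\mu\otimes\mu$. Lemma~\ref{lem:ae-isometry-automorphism} then makes $A$ a measure-space automorphism.

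For commutation with $T$, I use the color $K_X^{1,0}$:
\[
  d_X(TAx,Ax')=d_X(Tx,x')=d_X(ATx,Ax').
\]
The second equality uses the a.e.\ isometry applied to the pair $(Tx,x')$, whose law is again $\mu\otimes\mu$ since $T_\#\mu=\mu$. As $x'\mapsto Ax'$ pushes $\mu$ to $\mu$, this yields $d_X(TAx,w)=d_X(ATx,w)$ for $\mu\otimes\mu$-a.e.\ $(x,w)$. Lemma~\ref{lem:full-support-separation} with $u=TA$ and $v=AT$ gives $TA=AT$ $\mu$-a.e., so $A\in\Aut_{\rm md}(\Xbf)$; likewise $B\in\Aut_{\rm md}(\Ybf)$. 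Finally, $\lambda'=U_\#\lambda=(A\times B)_\#\lambda$.
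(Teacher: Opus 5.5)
Your proof is correct. Its second half is exactly the paper's argument: $A_\#\mu=\mu$, the a.e.\ isometry read off from the $X$-marginal of $\lambda\otimes\lambda$, Lemma~\ref{lem:ae-isometry-automorphism}, and commutation via the color $(1,0)$ together with Lemma~\ref{lem:full-support-separation}.

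The factorization step is where you take a different route.
\begin{itemize}
\item[] \emph{The paper} argues at the level of $\sigma$-algebras. It applies the minimality built into the definition of the orbit-distance factor to both $U$ and $U^{-1}$, and shows $U^{-1}\mathcal F_X^{\lambda'}=\mathcal F_X^{\lambda}$. It then identifies both sides with $\pi_X^{-1}\mathcal B_X$ through Lemma~\ref{lem:full-support-generation}, applied to $\lambda$ and to $\lambda'$. Finally it obtains $A$ from the Doob--Dynkin lemma.
\item[] \emph{You} run the Fubini-plus-density argument underlying that lemma once, on the target side. You choose the dense points $a_r=U_1z'_r$ so that $d_X(U_1z,a_r)=d_X(x,x'_r)$. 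You then write $A$ explicitly as $\iota^{-1}\circ(g_r)_r$. This is Borel because $\iota(X)$ is compact and $\iota^{-1}$ is continuous on it.
\end{itemize}
Your version uses only the color $(0,0)$ for this step. It never needs $U^{-1}$ to be kernel-preserving and never touches the abstract minimal $\sigma$-algebra $\mathcal F_X^\lambda$. It also replaces Doob--Dynkin by an explicit formula. What the paper's version buys is conceptual: it isolates coordinate generation of the orbit-distance factors as the exact mechanism that turns an intrinsic kernel isomorphism into a coordinate map. This is the point emphasized in the remark after Corollary~\ref{cor:mod-auto}.

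Two small points of wording:
\begin{itemize}
\item[] Your parenthetical claim that $U$ carries $\mathcal F_X^\lambda$ into $\mathcal F_X^{\lambda'}$ states the inclusion $\mathcal F_X^\lambda\subset U^{-1}\mathcal F_X^{\lambda'}$. That is not the inclusion that yields factorization. What your computation actually proves is $U^{-1}\pi_X^{-1}\mathcal B_X\subset\pi_X^{-1}\mathcal B_X$, the direction the paper gets from $U^{-1}$. Since nothing rests on that sentence, just correct or drop it.
\item[] For the choice of the $a_r$ you only need density of $U_1(E)$, not that it has full measure. Density follows without any measurability of the image. If a nonempty open set $O$ missed $U_1(E)$, then $\lambda(U_1^{-1}O)=0$, hence $\mu(O)=0$, contradicting full support.
\end{itemize}
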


\begin{proof}
By Lemma~\ref{lem:full-support-generation}, both $\lambda$ and $\lambda'$ have
coordinate-generated orbit-distance factors.
Let
\[
  H=U^{-1}\mathcal F_X^{\lambda'}
\]
be the pullback of the $X$-orbit-distance factor of
$(X\times Y,\lambda')$.  Since $U$ is kernel-preserving,
\[
  K_X^{a,b}(z,z')=K_X^{a,b}(Uz,Uz')
\]
modulo $\lambda\otimes\lambda$ for every $a,b\ge0$.  Hence each
$X$-colored kernel of $\Kcal_\lambda$ is measurable with respect to $H$ in the
corresponding variable.  By the minimality in the definition of
$\mathcal F_X^\lambda$,
\[
  \mathcal F_X^\lambda\subset U^{-1}\mathcal F_X^{\lambda'} .
\]
Applying the same argument to the inverse kernel isomorphism $U^{-1}$ gives
the reverse inclusion; $U^{-1}$ is kernel-preserving because the defining
kernel equalities for $U$ pull back under the measure-space isomorphism $U$.
Hence
\[
  U^{-1}\mathcal F_X^{\lambda'}=\mathcal F_X^\lambda
\]
in the completed measure algebra.  The coordinate-generation hypothesis gives
\[
  U^{-1}\pi_X^{-1}\mathcal B_X=\pi_X^{-1}\mathcal B_X
\]
modulo $\lambda$.  Hence $\pi_X\circ U$ is
$\pi_X^{-1}\mathcal B_X$-measurable.  By the Doob--Dynkin factorization lemma
on standard Borel spaces, there is a measurable map $A:X\to X$ such that
\[
  \pi_X(U(x,y))=A x
\]
for $\lambda$-almost every $(x,y)$.  The same argument applied to the
$Y$-colored kernels gives a measurable $B:Y\to Y$ with
$\pi_Y(U(x,y))=B y$ almost everywhere.  Thus $U=A\times B$ modulo $\lambda$.

Since $U_\#\lambda=\lambda'$ and both joinings have marginals $\mu,\nu$, the
maps $A$ and $B$ preserve $\mu$ and $\nu$.  The preservation of the color
$(a,b)=(0,0)$ gives
\[
  d_X(Ax,Ax')=d_X(x,x')
\]
for $\mu\otimes\mu$-almost every $(x,x')$, because the $X$-marginal of
$\lambda\otimes\lambda$ is $\mu\otimes\mu$.  Similarly $B$ is an isometry
modulo $\nu$.  By Lemma~\ref{lem:ae-isometry-automorphism}, $A$ and $B$ are
measure-space automorphisms of $(X,\mu)$ and $(Y,\nu)$, respectively.

It remains to prove that $A$ commutes with $T$ modulo $\mu$.  Kernel
preservation for the color $(1,0)$ says that, for
$z=(x,y)$ and $z'=(x',y')$,
\[
  K_X^{1,0}(Uz,Uz')=K_X^{1,0}(z,z')
\]
for $\lambda\otimes\lambda$-almost every $(z,z')$.  Since $U=A\times B$
modulo $\lambda$, this is exactly
\[
  d_X(TAx,Ax')=d_X(Tx,x')
\]
for $\mu\otimes\mu$-almost every $(x,x')$.  Since $A$ is an isometry modulo
$\mu$ and $T_\#\mu=\mu$, applying
$d_X(Au,Av)=d_X(u,v)$ with $(u,v)=(Tx,x')$ gives
\[
  d_X(ATx,Ax')=d_X(Tx,x')
\]
for $\mu\otimes\mu$-almost every $(x,x')$.  Hence
$d_X(TAx,Ax')=d_X(ATx,Ax')$ for $\mu\otimes\mu$-almost every $(x,x')$.
Since $A_\#\mu=\mu$, the map $(x,x')\mapsto(x,Ax')$ sends
$\mu\otimes\mu$ to $\mu\otimes\mu$.  Therefore
\[
  d_X(TAx,y)=d_X(ATx,y)
\]
for $\mu\otimes\mu$-almost every $(x,y)$.  Applying
Lemma~\ref{lem:full-support-separation} to the maps $TA$ and $AT$
gives $TAx=ATx$ for $\mu$-almost every $x$.  The proof for $B$ and $S$ is
identical.  Thus $A\in\Aut_{\rm md}(\Xbf)$ and $B\in\Aut_{\rm md}(\Ybf)$.
\end{proof}

\begin{corollary}[Reconstruction modulo coordinate symmetries]
\label{cor:mod-auto}
Assume $T,S$ are continuous and the models are support-reduced, and
$\lambda,\lambda'\in\J(T,S)$ are distance-array pure.  If
$\Phi_{n,m}(\lambda)=\Phi_{n,m}(\lambda')$ for all $n,m$, then $\lambda$ and
$\lambda'$ are coordinate-isomorphic.
\end{corollary}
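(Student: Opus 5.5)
This corollary is a direct composition of Theorem~\ref{thm:joining-kernel-reconstruction} with Theorem~\ref{thm:splitting}. The plan has three steps.

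\emph{Step 1: pass to the infinite array.} From the hypothesis $\Phi_{n,m}(\lambda)=\Phi_{n,m}(\lambda')$ for all $n,m$, the first part of the proof of Theorem~\ref{thm:joining-kernel-reconstruction} gives
\[
  \mathfrak A(\Kcal_\lambda)=\mathfrak A(\Kcal_{\lambda'}).
\]
The point is that every cylinder event of the infinite marked colored array involves only finitely many particles and finitely many orbit times $a,b$. It is therefore already visible in some $\Phi_{n,m}$, with the diagonal blocks supplying the marks $M_X^{a,b},M_Y^{a,b}$.

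\emph{Step 2: obtain a kernel-preserving isomorphism.} Since $\lambda$ and $\lambda'$ are distance-array pure, the second assertion of Theorem~\ref{thm:joining-kernel-reconstruction} applies. Equivalently, Theorem~\ref{thm:kernel-reconstruction}(2),(3) identifies each space with its twin-free quotient, and the quotients are kernel-isomorphic. This yields a measure-space isomorphism
\[
  U:(X\times Y,\lambda)\to(X\times Y,\lambda')
\]
that preserves every mark $M_X^{a,b},M_Y^{a,b}$ $\lambda$-a.e. and every kernel $K_X^{a,b},K_Y^{a,b}$ $\lambda\otimes\lambda$-a.e.

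\emph{Step 3: split the isomorphism.} Under the standing assumptions (continuity of $T,S$ and support-reduced models), Theorem~\ref{thm:splitting} applies to this $U$. It produces $A\in\Aut_{\rm md}(\Xbf)$ and $B\in\Aut_{\rm md}(\Ybf)$ with $U=A\times B$ $\lambda$-a.e. Because $U_\#\lambda=\lambda'$, this gives $\lambda'=(A\times B)_\#\lambda$, which is exactly coordinate-isomorphism.

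The only point needing care is that Theorem~\ref{thm:splitting} is stated for a kernel-preserving isomorphism between $\Kcal_\lambda$ and $\Kcal_{\lambda'}$ themselves, not between their twin-free quotients. So the isomorphism must be transported back through the identifications $\Kcal_\lambda\cong\Kcal_\lambda^{\rm tw}$ and $\Kcal_{\lambda'}\cong\Kcal_{\lambda'}^{\rm tw}$ supplied by purity. A composition of kernel-preserving measure-space isomorphisms is again kernel-preserving: the a.e. kernel identities pull back along measure-preserving maps because null sets of $\rho\otimes\rho$ are preserved. Hence this transport is harmless. Note also that the support-reduced hypothesis is what makes Lemma~\ref{lem:full-support-generation} available inside Theorem~\ref{thm:splitting}, so no further verification is needed.
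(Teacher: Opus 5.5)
Your proposal is correct and matches the paper's proof. The paper likewise composes Theorem~\ref{thm:joining-kernel-reconstruction}, where purity yields a kernel-preserving isomorphism $U$ from $\Kcal_\lambda$ to $\Kcal_{\lambda'}$, with Theorem~\ref{thm:splitting}, which splits $U=A\times B$ with $A\in\Aut_{\rm md}(\Xbf)$ and $B\in\Aut_{\rm md}(\Ybf)$, so that $\lambda'=(A\times B)_\#\lambda$. Your extra remark about transporting through the twin-free quotients is harmless but not needed, since the second assertion of Theorem~\ref{thm:joining-kernel-reconstruction} already provides $U$ between the original spaces.
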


\begin{proof}
Theorem~\ref{thm:joining-kernel-reconstruction} gives a kernel-preserving
isomorphism $U$.  Theorem~\ref{thm:splitting} identifies $U$ with a product
$A\times B$ of metric-dynamical automorphisms.  Hence
$\lambda'=(A\times B)_\#\lambda$.
\end{proof}

\begin{remark}
The coordinate-generation condition is the point at which an intrinsic
marked-kernel isomorphism is converted into a statement about the original coordinates.
Without support reduction, the same statement should be read on
$\supp\mu\times\supp\nu$.  The twin-free quotient theorem is the correct
intrinsic conclusion: unanchored arrays reconstruct the observable marked
orbit-distance kernel structure, not arbitrary labels outside the support of a
chosen model.
\end{remark}

\section{Anchored fixed-model reconstruction}

We now add anchors to obtain literal equality of joinings on a fixed compact
model.

\begin{definition}
Let $(a_r)_{r\ge1}$ be a dense sequence in $\supp\mu$ and $(b_r)_{r\ge1}$ a
dense sequence in $\supp\nu$.  For $R\ge1$ define the anchored array
\[
 \widetilde\Dcal^X_{n,m,R}(z_1,\ldots,z_n)
 =
 \left(
   \Dcal^X_{n,m}(z_1,\ldots,z_n),
   \bigl(d_X(T^a x_i,a_r)\bigr)_{
      1\le i\le n,\ 0\le a<m,\ 1\le r\le R}
 \right),
\]
and analogously $\widetilde\Dcal^Y_{n,m,R}$.  The anchored projection is
\[
 \widetilde\Phi_{n,m,R}(\lambda)
 =
 \Law_{\lambda^{\otimes n}}\bigl(
   \widetilde\Dcal^X_{n,m,R},
   \widetilde\Dcal^Y_{n,m,R}
 \bigr).
\]
\end{definition}

\begin{lemma}[Metric anchors separate support points]
\label{lem:anchors}
Let $(X,d)$ be compact, let $K\subset X$ be compact, and let $(a_r)$ be dense
in $K$.  The map
\[
  \iota_K:K\to\R^\N,\qquad
  \iota_K(x)=(d(x,a_r))_{r\ge1},
\]
is a continuous injective map.  Hence it is a Borel isomorphism from $K$ onto
its image and the functions $x\mapsto d(x,a_r)$, restricted to $K$, generate
$\mathcal B(K)$.
\end{lemma}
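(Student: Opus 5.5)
The argument follows Lemma~\ref{lem:distance-coordinates-generate} closely, with $K$ in place of $X$. No measure is involved now, and density is taken inside $K$ rather than in all of $X$. We treat $\R^\N$ with the product topology, which is metrizable and hence Hausdorff.

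\emph{Continuity.} Each coordinate $x\mapsto d(x,a_r)$ is $1$-Lipschitz on $K$. A map into a product space is continuous exactly when all of its coordinates are, so $\iota_K$ is continuous.

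\emph{Injectivity.} Take $x\ne x'$ in $K$. Since $x\in K$ and $(a_r)$ is dense in $K$, we can pick a subsequence with $a_{r_k}\to x$. Then $d(x,a_{r_k})\to0$, while $d(x',a_{r_k})\to d(x',x)>0$. So for $k$ large the $r_k$-th coordinates of $\iota_K(x)$ and $\iota_K(x')$ differ. This is the only step that uses the hypothesis that the anchors lie in $K$ and are dense there.

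\emph{Borel structure.} $K$ is compact and $\R^\N$ is Hausdorff, so the continuous injection $\iota_K$ is a closed map onto its image. It is therefore a homeomorphism $K\to\iota_K(K)$, and in particular a Borel isomorphism onto the compact, hence Borel, set $\iota_K(K)$.

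The Borel $\sigma$-algebra of $\R^\N$ is generated by the coordinate projections, because $\R^\N$ is second countable. Its trace on $\iota_K(K)$ is the Borel $\sigma$-algebra of the subspace. Pulling back through the homeomorphism $\iota_K$, we get that $\mathcal B(K)$ is generated by the functions $d(\cdot,a_r)|_K$.

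I do not expect a real obstacle. The only point that needs care is the last one: "generated" has to be checked through the trace of the product Borel $\sigma$-algebra on the image, and second countability of $\R^\N$ is what makes the product and Borel $\sigma$-algebras coincide.
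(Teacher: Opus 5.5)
Your proof is correct and follows the paper's argument step for step: continuity coordinatewise, injectivity by choosing anchors $a_{r_k}\to x$, and the compact-to-Hausdorff homeomorphism onto the image. You spell out the $\sigma$-algebra step more carefully than the paper does, via second countability of $\R^\N$ and the trace on $\iota_K(K)$. One cosmetic point: the indices $r_k$ need not form an increasing subsequence (if $x$ is isolated in $K$, density simply forces $x=a_r$ for some $r$), and this changes nothing.
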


\begin{proof}
Continuity is immediate.  If $x\neq x'$, choose $r_k$ with $a_{r_k}\to x$.
Then $d(x,a_{r_k})\to0$ while
$d(x',a_{r_k})\to d(x',x)>0$, so the coordinate sequences differ.  A continuous
injective map from a compact space into a Hausdorff space is a homeomorphism
onto its image.  Therefore the coordinate functions generate the Borel
$\sigma$-algebra.
\end{proof}

\begin{theorem}[Anchored fixed-model reconstruction]
\label{thm:anchored}
Let $\lambda,\lambda'\in\J(T,S)$.  If
\[
  \widetilde\Phi_{n,m,R}(\lambda)
  =
  \widetilde\Phi_{n,m,R}(\lambda')
  \qquad\text{for all } n,m,R\ge1,
\]
then $\lambda=\lambda'$.
\end{theorem}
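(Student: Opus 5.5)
It suffices to use only $n=1$, $m=1$: the anchored projection $\widetilde\Phi_{1,1,R}(\lambda)$ contains the joint law under $\lambda$ of the vector
\[
  \bigl((d_X(x,a_r))_{r\le R},\,(d_Y(y,b_r))_{r\le R}\bigr).
\]
The plan is to show that these finite-dimensional laws, for all $R$, determine $\lambda$ as a Borel measure on $X\times Y$. Higher $n,m$ are not needed for this statement.

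First, both $\lambda$ and $\lambda'$ are concentrated on $K=\supp\mu\times\supp\nu$. Indeed, $\lambda(X\times Y\setminus K)\le \mu(X\setminus\supp\mu)+\nu(Y\setminus\supp\nu)=0$, since the marginals are $\mu,\nu$. So it suffices to compare $\lambda,\lambda'$ as measures on the compact set $K$. Next, apply Lemma~\ref{lem:anchors} to $\supp\mu$ with anchors $(a_r)$ and to $\supp\nu$ with $(b_r)$. This shows that the map
\[
  \iota(x,y)=\bigl((d_X(x,a_r))_{r\ge1},(d_Y(y,b_r))_{r\ge1}\bigr)\in\R^\N\times\R^\N
\]
is continuous and injective on $K$, hence a homeomorphism onto its compact image $\iota(K)$. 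In particular it is a Borel isomorphism $K\to\iota(K)$.

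Now the hypothesis, applied with $n=m=1$ and arbitrary $R$, gives equality of all finite-dimensional marginals of $\iota_\#\lambda$ and $\iota_\#\lambda'$ on $\R^\N\times\R^\N$. Here we restrict to the sub-vector of anchor coordinates; it is a coordinate projection of the recorded array, so its law is determined. Cylinder sets form a $\pi$-system generating the product Borel $\sigma$-algebra of $\R^\N\times\R^\N$, so the $\pi$--$\lambda$ theorem (Kolmogorov uniqueness) gives $\iota_\#\lambda=\iota_\#\lambda'$. Since $\iota$ is a Borel isomorphism from $K$ onto its image and both measures live on $K$, pulling back yields $\lambda=\lambda'$ on $K$, hence on $X\times Y$.

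The only delicate point is the identification of supports. The anchors are dense only in $\supp\mu$ and $\supp\nu$, not in all of $X,Y$, so injectivity of $\iota$ holds only on $K$. Both joinings must therefore be shown to give $K$ full mass, which the marginal argument above provides. No invariance, continuity of $T,S$, or the Vershik machinery is required; the dynamics enter only trivially.
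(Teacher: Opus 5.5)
Your proposal is correct and follows the paper's proof essentially step for step: restrict to $n=m=1$, apply Lemma~\ref{lem:anchors} on $\supp\mu$ and $\supp\nu$ to get a Borel isomorphism onto the image, identify the pushforwards by uniqueness from their finite-dimensional cylinder laws, and pull back. Your explicit marginal argument that both joinings give $\supp\mu\times\supp\nu$ full mass fills in a step the paper asserts without justification.
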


\begin{proof}
It is enough to use the case $n=m=1$ and all $R$.  For $z=(x,y)$,
$\widetilde\Dcal^X_{1,1,R}$ records
$(d_X(x,a_r))_{1\le r\le R}$ and $\widetilde\Dcal^Y_{1,1,R}$ records
$(d_Y(y,b_r))_{1\le r\le R}$.  Write
\[
  \iota_X(x)=(d_X(x,a_r))_{r\ge1}\quad (x\in\supp\mu),
  \qquad
  \iota_Y(y)=(d_Y(y,b_r))_{r\ge1}\quad (y\in\supp\nu).
\]
Equality of the laws for all $R$ gives equality of all finite-dimensional
distributions of $(\iota_X,\iota_Y)$, and therefore
\[
  (\iota_X,\iota_Y)_\#\lambda=(\iota_X,\iota_Y)_\#\lambda'
\]
on
\[
  \iota_X(\supp\mu)\times\iota_Y(\supp\nu).
\]
Here probability measures on the countable product of real lines are
determined by their finite-dimensional cylinder distributions.
Both $\lambda$ and $\lambda'$ are supported on $\supp\mu\times\supp\nu$.  By
Lemma~\ref{lem:anchors}, applied to the two supports, $\iota_X$ and $\iota_Y$
are Borel isomorphisms onto their support images.  Pulling back the common
pushforward measure gives $\lambda=\lambda'$.
\end{proof}

\begin{remark}
The proof shows why anchored reconstruction is a fixed-coordinate safety net
rather than the main intrinsic theorem: anchors deliberately encode the points
of the model.  The unanchored theorem reconstructs only the marked colored
kernel structure and therefore has the correct quotient ambiguity.
\end{remark}

\section{Distance-array criteria for disjointness}

\begin{theorem}[Anchored distance-array criterion for disjointness]
\label{thm:anchored-disjoint}
Let $\Xbf$ and $\Ybf$ be compact metric measure-preserving systems with dense
anchor families.  Then the following are equivalent:
\begin{enumerate}
\item $\Xbf$ and $\Ybf$ are disjoint;
\item for every $n,m,R\ge1$,
\[
  \widetilde\Phi_{n,m,R}(\J(T,S))
  =
  \{\widetilde\Phi_{n,m,R}(\mu\otimes\nu)\}.
\]
\end{enumerate}
\end{theorem}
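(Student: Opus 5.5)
The equivalence follows almost directly from Theorem~\ref{thm:anchored}, and I would prove the two implications separately.

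For (1)$\Rightarrow$(2): if $\Xbf$ and $\Ybf$ are disjoint, then $\J(T,S)=\{\mu\otimes\nu\}$. The image of a singleton under $\widetilde\Phi_{n,m,R}$ is the singleton $\{\widetilde\Phi_{n,m,R}(\mu\otimes\nu)\}$ for every $n,m,R$. Nothing further is needed, since $\mu\otimes\nu$ always belongs to $\J(T,S)$ and so the image is never empty.

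For (2)$\Rightarrow$(1), take an arbitrary $\lambda\in\J(T,S)$. Hypothesis (2) gives
\[
  \widetilde\Phi_{n,m,R}(\lambda)=\widetilde\Phi_{n,m,R}(\mu\otimes\nu)
\]
for all $n,m,R\ge1$. Both $\lambda$ and $\mu\otimes\nu$ are joinings, so Theorem~\ref{thm:anchored} applies and yields $\lambda=\mu\otimes\nu$. Hence $\J(T,S)=\{\mu\otimes\nu\}$, which is disjointness.

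The only point needing care is the hypotheses under which Theorem~\ref{thm:anchored} is invoked. Its proof used only the case $n=m=1$ with all $R$, the density of the anchors in $\supp\mu$ and $\supp\nu$, and the fact that every joining is concentrated on $\supp\mu\times\supp\nu$. That last fact holds because its marginals are $\mu$ and $\nu$, and $\supp\mu$, $\supp\nu$ have full marginal measure. None of this requires continuity of $T,S$ or support reduction. I would remark that the reverse implication in fact only needs the $n=m=1$ anchored projections, while the full hierarchy is stated for uniformity with the unanchored theory. There is no real obstacle here: the substantive content sits entirely in the anchored reconstruction theorem, and this criterion is a formal corollary of it.
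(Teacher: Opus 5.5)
Your proposal is correct and matches the paper's proof: the forward direction is immediate from $\J(T,S)=\{\mu\otimes\nu\}$, and the converse applies Theorem~\ref{thm:anchored} to an arbitrary joining $\lambda$ and to $\mu\otimes\nu$. Your side remarks are also consistent with the paper's proof of Theorem~\ref{thm:anchored}: only the $n=m=1$ anchored projections are needed, and neither continuity nor support reduction is used.
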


\begin{proof}
If the systems are disjoint, the joining set consists only of
$\mu\otimes\nu$, so the displayed equality is immediate.  Conversely, assume
the displayed equality for all $n,m,R$.  Let $\lambda\in\J(T,S)$.  Then
\[
  \widetilde\Phi_{n,m,R}(\lambda)
  =
  \widetilde\Phi_{n,m,R}(\mu\otimes\nu)
  \qquad\forall n,m,R.
\]
By Theorem~\ref{thm:anchored}, $\lambda=\mu\otimes\nu$.  Since $\lambda$ was
arbitrary, $\J(T,S)=\{\mu\otimes\nu\}$.
\end{proof}

For the unanchored formulation, let $\J(T,S)/{\sim}$ denote the quotient
obtained by identifying joinings whose twin-free marked colored-kernel quotients are
kernel-isomorphic.  The product class is denoted by $[\mu\otimes\nu]$.
The next statement is the intrinsic quotient-level analogue of the anchored
disjointness criterion above.  It is not meant to be a fixed-model formulation
of Furstenberg disjointness: unanchored arrays do not remember labels beyond
the marked kernel quotient.

\begin{theorem}[Intrinsic disjointness criterion modulo symmetries]
\label{thm:intrinsic-disjoint}
The following are equivalent:
\begin{enumerate}
\item $\J(T,S)/{\sim}=\{[\mu\otimes\nu]\}$;
\item for every $n,m\ge1$,
\[
  \Phi_{n,m}(\J(T,S))
  =
  \{\Phi_{n,m}(\mu\otimes\nu)\}.
\]
\end{enumerate}
\end{theorem}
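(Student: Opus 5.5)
The plan is to prove both implications by passing through the infinite marked colored array law $\mathfrak A(\Kcal_\lambda)$. The key fact, already used in the proof of Theorem~\ref{thm:joining-kernel-reconstruction}, is that the family $(\Phi_{n,m}(\lambda))_{n,m\ge1}$ and the law $\mathfrak A(\Kcal_\lambda)$ determine each other. Every cylinder event of the infinite array involves only finitely many particles and finitely many marks and colors, so it is visible in some $\Phi_{n,m}$. Conversely, each finite array $(\Dcal^X_{n,m},\Dcal^Y_{n,m})$ is a coordinate projection of the infinite array: its diagonal blocks are the marks $M_X^{a,b},M_Y^{a,b}$ and its off-diagonal blocks are the kernels $K_X^{a,b},K_Y^{a,b}$. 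Hence
\[
\Phi_{n,m}(\lambda)=\Phi_{n,m}(\lambda')\ \forall n,m
\quad\Longleftrightarrow\quad
\mathfrak A(\Kcal_\lambda)=\mathfrak A(\Kcal_{\lambda'}).
\]

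For (2)$\Rightarrow$(1), take an arbitrary $\lambda\in\J(T,S)$. Hypothesis (2) gives $\Phi_{n,m}(\lambda)=\Phi_{n,m}(\mu\otimes\nu)$ for all $n,m$. Theorem~\ref{thm:joining-kernel-reconstruction} then shows that $\Kcal^{\rm tw}_\lambda$ and $\Kcal^{\rm tw}_{\mu\otimes\nu}$ are kernel-isomorphic, so $\lambda\sim\mu\otimes\nu$. Since $\lambda$ was arbitrary, $\J(T,S)/{\sim}=\{[\mu\otimes\nu]\}$.

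For (1)$\Rightarrow$(2), take $\lambda\in\J(T,S)$. Hypothesis (1) supplies a kernel isomorphism $U$ between $\Kcal^{\rm tw}_\lambda$ and $\Kcal^{\rm tw}_{\mu\otimes\nu}$. The chain is then as follows.
\begin{enumerate}
\item By Theorem~\ref{thm:kernel-reconstruction}(1), $\mathfrak A(\Kcal_\lambda)=\mathfrak A(\Kcal^{\rm tw}_\lambda)$, and likewise for $\mu\otimes\nu$.
\item A kernel isomorphism preserves the array law. If $(Z_i)$ is i.i.d.\ with law $\rho$, then $(UZ_i)$ is i.i.d.\ with law $\rho'$. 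The marks and kernels agree almost surely for each fixed index and pair, and there are only countably many indices, pairs, marks and colors, so the two infinite arrays coincide almost surely. Hence $\mathfrak A(\Kcal^{\rm tw}_\lambda)=\mathfrak A(\Kcal^{\rm tw}_{\mu\otimes\nu})$.
\item Combining the two steps gives $\mathfrak A(\Kcal_\lambda)=\mathfrak A(\Kcal_{\mu\otimes\nu})$.
\end{enumerate}
Projecting onto the finite arrays yields $\Phi_{n,m}(\lambda)=\Phi_{n,m}(\mu\otimes\nu)$. Since $\lambda$ was arbitrary, (2) follows.

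The step needing the most care is the second one. The kernel equalities under $U$ hold only modulo $\rho\otimes\rho$ on pairs, and one must make sure these null sets do not accumulate. Countability of the index set handles this, and the same-particle diagonal entries enter only through the marks, which are preserved $\rho$-almost everywhere. I would also note explicitly that $\sim$ is an equivalence relation: kernel isomorphisms compose and invert, as observed in the proof of Theorem~\ref{thm:splitting}. This makes the class $[\mu\otimes\nu]$ well defined.
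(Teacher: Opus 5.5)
Your proposal is correct and follows essentially the same route as the paper: for (2)$\Rightarrow$(1) you invoke Theorem~\ref{thm:joining-kernel-reconstruction}, and for (1)$\Rightarrow$(2) you chain $\mathfrak A(\Kcal_\lambda)=\mathfrak A(\Kcal_\lambda^{\rm tw})$ with invariance of the array law under kernel isomorphism and then project to the finite arrays. The only difference is that you spell out why a kernel isomorphism preserves the infinite array law, using the i.i.d.\ pushforward and a countable union of null sets, a step the paper simply asserts.
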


\begin{proof}
Assume first that $\J(T,S)/{\sim}=\{[\mu\otimes\nu]\}$, and let
$\lambda\in\J(T,S)$.  Then $\lambda\sim\mu\otimes\nu$, so their twin-free
marked colored-kernel quotients are kernel-isomorphic.  Kernel-isomorphic
marked colored spaces have the same infinite marked colored array law.  Since
each space and its twin-free quotient have the same array law by
Theorem~\ref{thm:kernel-reconstruction}, the original joining kernels
$\Kcal_\lambda$ and $\Kcal_{\mu\otimes\nu}$ also have the same infinite marked
colored array law, and hence the same finite-dimensional laws.  Therefore
\[
  \Phi_{n,m}(\lambda)=\Phi_{n,m}(\mu\otimes\nu)
  \qquad\forall n,m\ge1.
\]
Since $\lambda$ was arbitrary, the second condition follows.  Conversely, suppose
the second condition holds.  For any $\lambda\in\J(T,S)$, all finite array
projections agree with those of $\mu\otimes\nu$.  By
Theorem~\ref{thm:joining-kernel-reconstruction}, their twin-free quotients are
kernel-isomorphic, so $[\lambda]=[\mu\otimes\nu]$.

\end{proof}

\begin{remark}
The role of Theorem~\ref{thm:intrinsic-disjoint} is to say exactly what the
unanchored observables can determine: the collapse of the joining space after
passing to twin-free marked-kernel quotients.  Ordinary Furstenberg
disjointness is stronger because it asks for literal equality of joinings on
the fixed product model.  That fixed-model statement is Theorem~\ref{thm:anchored-disjoint},
where anchors remove the quotient ambiguity.
\end{remark}

\begin{remark}
Finite profiles depend on the chosen metric model.  The all-order anchored
zero criterion does not: by Theorem~\ref{thm:anchored-disjoint}, it is exactly
Furstenberg disjointness, which is a measure-theoretic property of the systems.
\end{remark}

\section{Quantitative finite-pattern dependence}

Let $W_p$ denote the ordinary $p$-Wasserstein distance on the finite array cube,
with any fixed product Euclidean metric.  Compactness ensures finiteness.

\begin{definition}
For $p\ge1$ define the unanchored maximal finite-pattern dependence coefficient
\[
  \operatorname{Dep}_{n,m,p}(\Xbf,\Ybf)
  =
  \sup_{\lambda\in\J(T,S)}
  W_p\bigl(
    \Phi_{n,m}(\lambda),
    \Phi_{n,m}(\mu\otimes\nu)
  \bigr).
\]
The anchored coefficient is
\[
  \widetilde{\operatorname{Dep}}_{n,m,R,p}(\Xbf,\Ybf)
  =
  \sup_{\lambda\in\J(T,S)}
  W_p\bigl(
    \widetilde\Phi_{n,m,R}(\lambda),
    \widetilde\Phi_{n,m,R}(\mu\otimes\nu)
  \bigr).
\]
\end{definition}

\begin{proposition}[Existence of maximizers]
\label{prop:max}
If $T$ and $S$ are continuous, then the suprema defining
$\operatorname{Dep}_{n,m,p}$ and
$\widetilde{\operatorname{Dep}}_{n,m,R,p}$ are attained.
\end{proposition}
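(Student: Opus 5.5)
The plan is to express each supremum as the maximum of a continuous real-valued function over a compact set, and then apply the extreme value theorem. By Proposition~\ref{prop:compact}, when $T$ and $S$ are continuous the joining set $\J(T,S)$ is weakly compact. The same proposition shows that $\lambda\mapsto\Phi_{n,m}(\lambda)$ is weakly continuous into the probability measures on the finite array cube.

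First I will prove the matching continuity statement for the anchored projection $\widetilde\Phi_{n,m,R}$. The argument copies the proof of Proposition~\ref{prop:compact}. The anchored array map $(z_1,\ldots,z_n)\mapsto(\widetilde\Dcal^X_{n,m,R},\widetilde\Dcal^Y_{n,m,R})$ is continuous on $(X\times Y)^n$: its new entries $d_X(T^a x_i,a_r)$ and $d_Y(S^a y_i,b_r)$ are continuous in the sampled points because the metrics and finitely many iterates of $T,S$ are continuous, and the anchors are fixed. If $\lambda_k\to\lambda$ weakly, then $\lambda_k^{\otimes n}\to\lambda^{\otimes n}$ weakly, and pushing forward along this continuous map gives $\widetilde\Phi_{n,m,R}(\lambda_k)\to\widetilde\Phi_{n,m,R}(\lambda)$ weakly.

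Next I need continuity of $W_p$. The array cube $Q$ (a product of compact intervals, of the form $[0,\operatorname{diam}X]^{N}\times[0,\operatorname{diam}Y]^{N'}$) is compact. On the probability measures of a compact metric space, $W_p$ metrizes the weak topology. So $W_p(\cdot,\cdot)$ is continuous in each argument, and in particular $\nu\mapsto W_p(\nu,\nu_0)$ is continuous for the fixed reference $\nu_0=\Phi_{n,m}(\mu\otimes\nu)$ or $\widetilde\Phi_{n,m,R}(\mu\otimes\nu)$. This can be cited from \cite{villani2009optimal}. If a self-contained argument is preferred, use the triangle inequality $|W_p(\alpha,\nu_0)-W_p(\beta,\nu_0)|\le W_p(\alpha,\beta)$ together with the fact that weak convergence on a compact space implies $W_p$-convergence, since all moments are uniformly bounded by $\operatorname{diam}(Q)^p$.

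Combining the two steps, $\lambda\mapsto W_p(\Phi_{n,m}(\lambda),\Phi_{n,m}(\mu\otimes\nu))$ and its anchored counterpart are continuous real functions on the nonempty compact set $\J(T,S)$, which contains $\mu\otimes\nu$. A continuous real function on a nonempty compact set attains its supremum, so both suprema are maxima.

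The only step needing care is that weak convergence is equivalent to $W_p$-convergence on a compact space, and that is standard. One further point must be checked: the weak topology on $\J(T,S)$ used in Proposition~\ref{prop:compact} is metrizable, so that sequential continuity suffices. This holds because $X\times Y$ is compact metric; alternatively the continuity argument can be run with nets.
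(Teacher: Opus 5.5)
Your proposal is correct and follows essentially the same route as the paper: continuity of $\lambda\mapsto\Phi_{n,m}(\lambda)$ and of its anchored analogue on the compact set $\J(T,S)$ via Proposition~\ref{prop:compact}, continuity of $W_p$ because it metrizes weak convergence on the compact array cube, and then the extreme value theorem. Your extra checks only make explicit what the paper's short proof leaves implicit: continuity of the anchored array map, nonemptiness of $\J(T,S)$ since it contains $\mu\otimes\nu$, and metrizability of the weak topology so that sequential continuity suffices.
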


\begin{proof}
By Proposition~\ref{prop:compact}, $\lambda\mapsto\Phi_{n,m}(\lambda)$ is
continuous on the compact joining set.  Wasserstein distance on a compact
metric space metrizes weak convergence and is continuous
\cite[Ch.~6]{villani2009optimal}.  Hence the displayed
objective is continuous and attains its maximum.  The anchored case is
identical because the anchored array map is continuous when the dynamics are
continuous.
\end{proof}

\begin{theorem}[Quantitative zero criterion]
\label{thm:zero}
For compact metric measure-preserving systems with dense support anchors and
any fixed $p\ge1$,
\[
  \widetilde{\operatorname{Dep}}_{n,m,R,p}(\Xbf,\Ybf)=0
  \qquad\forall n,m,R\ge1
\]
if and only if $\Xbf$ and $\Ybf$ are disjoint.
\end{theorem}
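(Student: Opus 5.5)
The plan is to reduce the statement to Theorem~\ref{thm:anchored-disjoint} by showing that, for each fixed $(n,m,R)$, vanishing of the anchored coefficient is equivalent to the set equality $\widetilde\Phi_{n,m,R}(\J(T,S))=\{\widetilde\Phi_{n,m,R}(\mu\otimes\nu)\}$. The only analytic input is that $W_p$ is a genuine metric on the probability measures on the compact array cube. Since the array cube is compact, every law on it has finite $p$-th moment. Hence $W_p(P,Q)$ is finite, nonnegative, and vanishes if and only if $P=Q$; see \cite[Ch.~6]{villani2009optimal}. No continuity of $T,S$ is needed, since we never use attainment of the supremum.

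For the forward direction, assume $\widetilde{\operatorname{Dep}}_{n,m,R,p}=0$ for all $n,m,R$. Fix any $\lambda\in\J(T,S)$. Each term in the supremum is nonnegative and the supremum is zero, so
\[
  W_p\bigl(\widetilde\Phi_{n,m,R}(\lambda),\widetilde\Phi_{n,m,R}(\mu\otimes\nu)\bigr)=0 .
\]
Because $W_p$ separates points, this gives $\widetilde\Phi_{n,m,R}(\lambda)=\widetilde\Phi_{n,m,R}(\mu\otimes\nu)$. Note that $\J(T,S)$ is nonempty, since it contains $\mu\otimes\nu$. Hence condition (2) of Theorem~\ref{thm:anchored-disjoint} holds, and that theorem (equivalently, Theorem~\ref{thm:anchored} applied to each $\lambda$) yields $\J(T,S)=\{\mu\otimes\nu\}$.

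For the converse, assume disjointness. Then the supremum runs over the single joining $\mu\otimes\nu$, and its value is $W_p(P,P)=0$ for every $n,m,R$.

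The only point requiring care, rather than a real obstacle, is the identity-of-indiscernibles property of $W_p$ together with the finiteness of $W_p$ on the cube. Both are standard once the underlying space is compact, so the whole argument is short.
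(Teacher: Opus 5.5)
Your proposal is correct and follows the paper's proof essentially step for step. Disjointness makes each supremum run over the single product joining. Conversely, vanishing of every coefficient forces $W_p\bigl(\widetilde\Phi_{n,m,R}(\lambda),\widetilde\Phi_{n,m,R}(\mu\otimes\nu)\bigr)=0$ for each joining, hence equality of the anchored laws because $W_p$ is a metric on the compact cube, and Theorem~\ref{thm:anchored-disjoint} (equivalently Theorem~\ref{thm:anchored}) finishes. Your remark that continuity of $T,S$ is not needed is consistent with the paper's argument, which likewise never uses attainment of the supremum.
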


\begin{proof}
If $\Xbf$ and $\Ybf$ are disjoint, then $\J(T,S)=\{\mu\otimes\nu\}$.  Hence the
supremum defining each anchored coefficient is taken over the single product
joining, and every coefficient is zero.

Conversely, assume that
\[
  \widetilde{\operatorname{Dep}}_{n,m,R,p}(\Xbf,\Ybf)=0
  \qquad\forall n,m,R\ge1 .
\]
Let $\lambda\in\J(T,S)$.  Since the coefficient is the supremum over all
joinings, for every $n,m,R$ we have
\[
  W_p\bigl(
    \widetilde\Phi_{n,m,R}(\lambda),
    \widetilde\Phi_{n,m,R}(\mu\otimes\nu)
  \bigr)=0 .
\]
The finite anchored array cube is compact, and $W_p$ is a metric on
probability measures on this cube.  Therefore
\[
  \widetilde\Phi_{n,m,R}(\lambda)
  =
  \widetilde\Phi_{n,m,R}(\mu\otimes\nu)
  \qquad\forall n,m,R\ge1 .
\]
By the anchored disjointness criterion, Theorem~\ref{thm:anchored-disjoint},
this implies $\lambda=\mu\otimes\nu$.  Since $\lambda$ was arbitrary,
$\J(T,S)=\{\mu\otimes\nu\}$.
\end{proof}

The raw coefficients need not decay with the orbit length: finite arrays always
contain the short-time coordinates already visible at $m=1$.  Decay estimates
therefore become meaningful after applying normalized orbit-observable
projections to the array.  The next examples separate these two phenomena.  The
Bernoulli example shows uniform non-decay of the raw profile, while the
expanding and Markov examples give explicit Wasserstein rates for normalized
time-averaged projections of the same distance-array laws.

\begin{proposition}[Uniform non-decay for Bernoulli shifts]
\label{prop:bernoulli-unanchored-bound}
Let $X=\{0,1\}^{\mathbb Z}$, let $\mu=(\frac12,\frac12)^{\mathbb Z}$, let
$T$ be the two-sided Bernoulli shift, and equip $X$ with the metric
\[
  d(x,x')=\sum_{k\in\mathbb Z}2^{-|k|-2}|x_k-x'_k|.
\]
Let $\Xbf=(X,d,\mu,T)$.  If the finite array cubes are equipped with the
coordinate Euclidean metrics used in the definition of $W_p$, then for every
$m\ge1$ and every $p\ge1$,
\[
  \operatorname{Dep}_{2,m,p}(\Xbf,\Xbf)
  \ge
  \frac{5}{144\sqrt2}.
\]
In particular the unanchored dependence profile of the self-pair
$(\Xbf,\Xbf)$ has no decay in the orbit length $m$ at particle level $n=2$.
\end{proposition}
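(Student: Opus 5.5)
The plan is to exhibit one explicit non-product self-joining whose distance-array law at $n=2$ stays at a fixed positive Wasserstein distance from the product law. I would take the diagonal joining $\lambda_\Delta=(x\mapsto(x,x))_\#\mu$, which is a self-joining of $(X,\mu,T)$. Since $\operatorname{Dep}_{2,m,p}$ is a supremum over $\J(T,T)$, it is enough to show
\[
  W_p\bigl(\Phi_{2,m}(\lambda_\Delta),\Phi_{2,m}(\mu\otimes\mu)\bigr)\ge \tfrac{5}{144\sqrt2}.
\]
The main point is to make this bound uniform in $m$. I would discard almost all of the array and keep only the two entries with $(i,j)=(1,2)$ and $(a,b)=(0,0)$, namely $U=d(x_1,x_2)$ in the $X$-block and $V=d(y_1,y_2)$ in the $Y$-block. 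These two entries are present for every $m\ge1$.

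The first step is a reduction of $W_p$ to a two-dimensional $W_1$ problem. Coordinate projection from the array cube with its Euclidean product metric onto $\R^2$ is $1$-Lipschitz, so pushing both measures forward does not increase $W_p$. On a compact space $W_p\ge W_1$ by Jensen's inequality applied to the optimal coupling. Hence it suffices to bound from below $W_1\bigl(\Law_{\lambda_\Delta^{\otimes2}}(U,V),\Law_{(\mu\otimes\mu)^{\otimes2}}(U,V)\bigr)$.

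The second step is to identify these two laws.
\begin{itemize}
\item Under $\lambda_\Delta^{\otimes2}$ we have $y_i=x_i$, so $V=U$ and the law is that of $(D,D)$.
\item Under $(\mu\otimes\mu)^{\otimes2}$ the $X$-sample and the $Y$-sample are independent, so the law is that of $(D,D')$ with $D'$ an independent copy of $D$.
\end{itemize}
Here $D=\sum_k 2^{-|k|-2}\xi_k$, where $\xi_k=|x_{1,k}-x_{2,k}|$ are i.i.d. Bernoulli$(\tfrac12)$.

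The third step uses Kantorovich--Rubinstein duality with the test function $f(u,v)=|u-v|/\sqrt2$. This function is $1$-Lipschitz for the Euclidean metric on $\R^2$. It integrates to $0$ against the law of $(D,D)$ and to $\mathbb E|D-D'|/\sqrt2$ against the law of $(D,D')$. So everything reduces to the one-dimensional estimate
\[
  \mathbb E|D-D'|\ge \tfrac{5}{144}.
\]

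To prove this estimate, write $D-D'=\tfrac14\eta_0+W$, where $\eta_k=\xi_k-\xi'_k$ takes the values $\pm1$ with probability $\tfrac14$ each and $0$ with probability $\tfrac12$. The remainder $W=\sum_{k\ne0}2^{-|k|-2}\eta_k$ is symmetric and independent of $\eta_0$. For symmetric $W$ and any constant $c$, convexity gives $\mathbb E|c+W|=\mathbb E\tfrac12\bigl(|c+W|+|c-W|\bigr)\ge|c|$. Conditioning on $\eta_0$ then yields
\[
  \mathbb E|D-D'|\ge \tfrac12\cdot\tfrac14=\tfrac18>\tfrac5{144}.
\]
This gives the stated bound, in fact with room to spare.

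The only delicate point is bookkeeping rather than analysis. One has to check that the metric on the array cube really is the Euclidean product metric, so that the projection is $1$-Lipschitz and $|u-v|/\sqrt2$ has Lipschitz constant exactly $1$. One also has to check that $\lambda_\Delta$ belongs to $\J(T,T)$. Once these are confirmed, the bound does not depend on $m$ or $p$, which is the claimed absence of decay.
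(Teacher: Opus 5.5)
Your proof is correct and follows the paper's skeleton: the diagonal joining $\lambda_\Delta$, the reduction from $W_p$ to $W_1$, and a Kantorovich--Rubinstein test function that depends only on the entries $d(x_1,x_2)$ and $d(y_1,y_2)$ and vanishes on the diagonal. The difference is in the key estimate. The paper uses the quadratic statistic $(r-s)^2$. Its product-side integral is computed exactly as $2\operatorname{Var}(R)=5/96$. The cost is that $(r-s)^2$ is Lipschitz only on a bounded set, so it must be rescaled by the diameter-dependent constant $3\sqrt2/2$, and this rescaling is exactly where the stated constant $5/(144\sqrt2)$ comes from. You use $|u-v|/\sqrt2$ instead, which is globally $1$-Lipschitz and needs no diameter bound; it is the same test function the paper later uses for reversible Markov shifts. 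You also replace the exact moment computation by the symmetrization inequality $\mathbb E|c+W|\ge|c|$, which isolates the dominant $k=0$ coordinate. This yields $W_1\ge 1/(8\sqrt2)$, which is $3.6$ times the paper's constant, so the statement follows with room to spare. The paper's route is a short variance calculation that works for any non-degenerate $d(X_1,X_2)$ once its variance and the diameter are known. Yours uses the independent symmetric increments and the dominant leading weight of this particular metric, and in return gives the sharper bound.
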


\begin{proof}
Let $\lambda_\Delta=(x\mapsto(x,x))_\#\mu$ be the diagonal joining of
$\Xbf$ with itself.  Since $\lambda_\Delta\in\J(T,T)$,
\[
  \operatorname{Dep}_{2,m,p}(\Xbf,\Xbf)
  \ge
  W_p\bigl(\Phi_{2,m}(\lambda_\Delta),\Phi_{2,m}(\mu\otimes\mu)\bigr).
\]
It is enough to give a lower bound for $W_1$, because $W_p\ge W_1$ on a
bounded metric space for every $p\ge1$.

The array $\Phi_{2,m}$ contains, for every $m\ge1$, the two coordinates
\[
  r=d(x_1,x_2),
  \qquad
  s=d(y_1,y_2),
\]
corresponding to $a=b=0$.  Let
\[
  F(r,s)=(r-s)^2 .
\]
The diameter of $(X,d)$ is at most
\[
  \sum_{k\in\mathbb Z}2^{-|k|-2}
  =
  \frac14+2\sum_{k\ge1}2^{-k-2}
  =
  \frac34 .
\]
Therefore $0\le r,s\le 3/4$, and the gradient of $F$ satisfies
\[
  \|\nabla F(r,s)\|_2
  =
  2\sqrt2\,|r-s|
  \le
  \frac{3\sqrt2}{2}.
\]
Thus $F$ is $(3\sqrt2/2)$-Lipschitz as a function of the full finite array
with its coordinate Euclidean metric.  Hence
$\psi=(2/(3\sqrt2))F$ is $1$-Lipschitz.

Under the diagonal joining, $y_i=x_i$ for $i=1,2$, so $r=s$ almost surely and
\[
  \int \psi\,\dd\Phi_{2,m}(\lambda_\Delta)=0.
\]
Under the product joining, $r$ and $s$ are independent copies of
\[
  R=d(X_1,X_2)
  =
  \sum_{k\in\mathbb Z}2^{-|k|-2}Z_k,
\]
where the $Z_k=|X_{1,k}-X_{2,k}|$ are independent Bernoulli variables with
parameter $1/2$.  Therefore
\[
  \operatorname{Var}(R)
  =
  \sum_{k\in\mathbb Z}2^{-2|k|-4}\operatorname{Var}(Z_k)
  =
  \frac14\sum_{k\in\mathbb Z}2^{-2|k|-4}.
\]
The remaining geometric sum is
\[
  \sum_{k\in\mathbb Z}2^{-2|k|-4}
  =
  \frac1{16}\left(1+2\sum_{k\ge1}4^{-k}\right)
  =
  \frac1{16}\left(1+\frac23\right)
  =
  \frac5{48}.
\]
Hence $\operatorname{Var}(R)=5/192$.  If $R'$ is an independent copy of $R$,
then
\[
  \mathbb E(R-R')^2=2\operatorname{Var}(R)=\frac5{96}.
\]
Consequently,
\[
  \int \psi\,\dd\Phi_{2,m}(\mu\otimes\mu)
  =
  \frac{2}{3\sqrt2}\cdot\frac5{96}
  =
  \frac5{144\sqrt2}.
\]
By the Kantorovich--Rubinstein dual formula for $W_1$
\cite[Ch.~1]{villani2009optimal},
\[
  W_1\bigl(\Phi_{2,m}(\lambda_\Delta),\Phi_{2,m}(\mu\otimes\mu)\bigr)
  \ge
  \frac5{144\sqrt2}.
\]
This proves the displayed lower bound for all $m\ge1$ and all $p\ge1$.
\end{proof}

\begin{proposition}[An averaged Wasserstein rate for the doubling map]
\label{prop:doubling-averaged-rate}
Let $\mathbb T=\mathbb R/\mathbb Z$, let $T x=2x\pmod 1$, let $\mu$ be Haar
measure, and let $d(x,y)=\min_{k\in\mathbb Z}|x-y-k|$ be the circle metric.
For $m\ge1$ define the orbit-averaged two-particle statistic
\[
  A_m(x_1,x_2)=\frac1m\sum_{t=0}^{m-1} d(T^t x_1,T^t x_2).
\]
Let $\lambda_\Delta=(x\mapsto(x,x))_\#\mu$ be the diagonal joining of
$(\mathbb T,d,\mu,T)$ with itself, and let $Q_m^\Delta$ and $Q_m^\otimes$ be
the laws on $[0,1/2]^2$ of
\[
  (A_m(x_1,x_2),A_m(y_1,y_2))
\]
under $\lambda_\Delta^{\otimes2}$ and $(\mu\otimes\mu)^{\otimes2}$,
respectively.  Then
\[
  \frac{1}{24\sqrt2\,m}
  \le
  W_1(Q_m^\Delta,Q_m^\otimes)
  \le
  \frac{1}{\sqrt{24m}} .
\]
Consequently, for the full unanchored array laws with the coordinate Euclidean
array metric,
\[
  W_1\bigl(\Phi_{2,m}(\lambda_\Delta),\Phi_{2,m}(\mu\otimes\mu)\bigr)
  \ge
  \frac{1}{24\sqrt{2m}} .
\]
\end{proposition}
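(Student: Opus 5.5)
The plan is to reduce everything to one scalar random variable, the averaged distance $A_m$, and to show its variance is exactly $1/(48m)$. This comes from a Fourier orthogonality of the doubling iterates. Under $\lambda_\Delta$ we have $y_i=x_i$, so $Q_m^\Delta$ is the law of $(A,A)$ with $A=A_m(x_1,x_2)$, $x_1,x_2$ i.i.d. Haar. Under $(\mu\otimes\mu)^{\otimes 2}$, $Q_m^\otimes$ is the law of $(A,A')$ with $A'$ an independent copy of $A$. Both bounds then follow from the single quantity $\mathbb E(A-A')^2=2\operatorname{Var}(A)$.

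\emph{Variance computation (the key step).} Write $u=x_1-x_2$, which is Haar-distributed on $\mathbb T$. Since $T$ is a group endomorphism, $d(T^tx_1,T^tx_2)=g(2^tu)$, where $g(u)=d(u,0)$ is the triangle wave. Hence $A=\frac1m\sum_{t<m}g(2^tu)$.

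The function $g-\tfrac14$ has Fourier coefficients supported on odd frequencies only, with $\hat g(n)=-1/(\pi^2n^2)$ for odd $n$. Consequently $g(2^t\cdot)-\tfrac14$ is supported on $2^t\cdot$odd. These frequency sets are disjoint for distinct $t$, so by Parseval the summands are pairwise uncorrelated. Therefore
\[
\operatorname{Var}(A)=\frac{1}{m}\operatorname{Var}(g(u))=\frac1m\Bigl(\frac1{12}-\frac1{16}\Bigr)=\frac1{48m}.
\]
I expect this orthogonality to be the only real content; the rest is bookkeeping.

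\emph{Lower bound.} Follow the Bernoulli proof (Proposition~\ref{prop:bernoulli-unanchored-bound}) with $F(r,s)=(r-s)^2$ on $[0,1/2]^2$. Its gradient norm is $2\sqrt2|r-s|\le\sqrt2$, so $\psi=F/\sqrt2$ is $1$-Lipschitz. The function $\psi$ integrates to $0$ against $Q_m^\Delta$ and to $2\operatorname{Var}(A)/\sqrt2=\sqrt2/(48m)$ against $Q_m^\otimes$. By Kantorovich--Rubinstein,
\[
W_1(Q_m^\Delta,Q_m^\otimes)\ge \frac{1}{24\sqrt2\,m}.
\]

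\emph{Upper bound.} Use the coupling that keeps the first coordinate $A$ common and sends $(A,A)$ to $(A,A')$. By Cauchy--Schwarz,
\[
W_1\le\mathbb E|A-A'|\le\sqrt{2\operatorname{Var}(A)}=\sqrt{2/(48m)}=1/\sqrt{24m}.
\]

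\emph{Passage to the full array.} The pair $(A_m(x_1,x_2),A_m(y_1,y_2))$ is obtained from the array $(\Dcal^X_{2,m},\Dcal^Y_{2,m})$ by averaging the $m$ coordinates $(i,j,a,b)=(1,2,t,t)$ in each block. Each average is $(1/\sqrt m)$-Lipschitz for the Euclidean metric by Cauchy--Schwarz. The joint map $G$ into $\R^2$ is therefore $(1/\sqrt m)$-Lipschitz, since the two blocks use disjoint coordinates. Pushforward by an $L$-Lipschitz map contracts $W_1$ by the factor $L$, so
\[
W_1\bigl(\Phi_{2,m}(\lambda_\Delta),\Phi_{2,m}(\mu\otimes\mu)\bigr)\ge\sqrt m\,W_1(Q_m^\Delta,Q_m^\otimes)\ge\frac{1}{24\sqrt{2m}}.
\]
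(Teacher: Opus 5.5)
Your proposal is correct and follows the paper's proof essentially step for step. The steps are: the exact identity $\operatorname{Var}(A_m)=1/(48m)$ from odd-frequency Fourier orthogonality of the iterates $f(2^t\cdot)$; the test function $(a-b)^2/\sqrt2$ with Kantorovich--Rubinstein for the lower bound; the common-first-coordinate coupling with Cauchy--Schwarz for the upper bound; and the $m^{-1/2}$-Lipschitz averaging projection for the full array. The only cosmetic difference is that you note directly that the frequency sets $2^t\cdot(\text{odd})$ are pairwise disjoint, whereas the paper reduces a pair $(s,t)$ to $(0,t-s)$ by a Haar-invariant change of variables.
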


\begin{proof}
Write
\[
  f(u)=d(u,0),\qquad u\in\mathbb T .
\]
If $X_1,X_2$ are independent Haar points, then
$U=X_1-X_2$ is Haar and
\[
  A_m(X_1,X_2)=\frac1m\sum_{t=0}^{m-1} f(2^tU).
\]
The function $f(u)=\min(u,1-u)$ has Fourier expansion
\[
  f(u)=\frac14-\frac{2}{\pi^2}
  \sum_{r=0}^{\infty}
  \frac{\cos(2\pi(2r+1)u)}{(2r+1)^2}.
\]
Thus only odd Fourier frequencies occur.  For every $t\ge1$, the nonzero
frequencies of $f(2^t u)$ are $2^t(2r+1)$, which are even, and hence are
disjoint from the odd frequencies of $f(u)$.  Orthogonality of exponentials
therefore gives
\[
  \operatorname{Cov}\bigl(f(U),f(2^tU)\bigr)=0
  \qquad\forall t\ge1.
\]
More generally, if $0\le s<t$, then the change of variables $v=2^s u$ and
Haar invariance give
\[
  \int_{\mathbb T} f(2^s u)f(2^t u)\,\dd u
  =
  \int_{\mathbb T} f(v)f(2^{t-s}v)\,\dd v,
\]
so the same Fourier orthogonality yields
\[
  \operatorname{Cov}\bigl(f(2^sU),f(2^tU)\bigr)=0 .
\]
Also
\[
  \int_{\mathbb T} f\,\dd u=\frac14,\qquad
  \int_{\mathbb T} f^2\,\dd u
  =
  2\int_0^{1/2}u^2\,\dd u
  =
  \frac1{12},
\]
so
\[
  \operatorname{Var}(f(U))=\frac1{12}-\frac1{16}=\frac1{48}.
\]
The covariance cancellation yields the exact variance identity
\[
  \operatorname{Var}(A_m)=\frac{1}{48m}.
\]

Under $\lambda_\Delta^{\otimes2}$ the two coordinates in $Q_m^\Delta$ are
identical, so $Q_m^\Delta=\Law(A_m,A_m)$.  Under the product joining they are
independent copies, so $Q_m^\otimes=\Law(A_m,A_m')$ with $A_m'$ independent of
$A_m$.

For the upper bound, couple $(A_m,A_m)$ and $(A_m,A_m')$ using the same first
coordinate and an independent copy in the second coordinate.  Then
\[
  W_1(Q_m^\Delta,Q_m^\otimes)
  \le
  \mathbb E|A_m-A_m'|
  \le
  \sqrt{\mathbb E(A_m-A_m')^2}
  =
  \sqrt{2\operatorname{Var}(A_m)}
  =
  \frac{1}{\sqrt{24m}}.
\]
For the lower bound, the function
\[
  \psi(a,b)=\frac{(a-b)^2}{\sqrt2},\qquad 0\le a,b\le\frac12,
\]
is $1$-Lipschitz because
\[
  \|\nabla\psi(a,b)\|_2=\frac{2|a-b|}{\sqrt2}\sqrt2=2|a-b|\le1.
\]
Therefore the Kantorovich--Rubinstein dual formula for $W_1$
\cite[Ch.~1]{villani2009optimal} gives
\[
  W_1(Q_m^\Delta,Q_m^\otimes)
  \ge
  \left|
  \mathbb E\psi(A_m,A_m)-\mathbb E\psi(A_m,A_m')
  \right|
  =
  \frac{1}{\sqrt2}\mathbb E(A_m-A_m')^2
  =
  \frac{1}{24\sqrt2\,m}.
\]

Finally, the observable defining $Q_m^\Delta$ and $Q_m^\otimes$ is the
pushforward of the full array law under the map
\[
  \text{full array}\longmapsto (A_m^X,A_m^Y).
\]
This map averages $m$ disjoint $X$-coordinates and $m$ disjoint
$Y$-coordinates, and has Lipschitz constant $m^{-1/2}$ for the coordinate
Euclidean metrics.  Wasserstein distance decreases under Lipschitz pushforward
\cite[Ch.~6]{villani2009optimal},
hence
\[
  W_1(Q_m^\Delta,Q_m^\otimes)
  \le
  m^{-1/2}
  W_1\bigl(\Phi_{2,m}(\lambda_\Delta),\Phi_{2,m}(\mu\otimes\mu)\bigr),
\]
which gives the displayed lower bound for the full array laws.
\end{proof}

\begin{proposition}[A spectral Wasserstein rate for reversible Markov shifts]
\label{prop:markov-spectral-rate}
Let $E$ be a finite set with at least two points, let $P$ be an irreducible
aperiodic transition matrix on $E$, and let $\pi$ be its stationary law.  Assume
that $P$ is reversible on $L^2(\pi)$, and write
\[
  \theta=\max\{|\alpha|:\alpha\in\operatorname{Spec}(P),\ \alpha\ne1\}<1 .
\]
Let $\mu_P$ be the stationary one-sided Markov measure on
$X=E^{\mathbb N_0}$, and let $T$ be the left shift.  Fix
$0<\beta<1$ and $\eta>0$ such that
\[
  \tau:=\eta\sum_{r\ge1}\beta^r=\frac{\eta\beta}{1-\beta}<1,
\]
and equip $X$ with the compatible metric
\[
  d_{\beta,\eta}(x,x')
  =
  \mathbf 1_{\{x_0\ne x'_0\}}
  +\eta\sum_{r\ge1}\beta^r\mathbf 1_{\{x_r\ne x'_r\}} .
\]
For $m\ge1$ define the normalized two-particle mismatch observable
\[
  B_m(x_1,x_2)
  =
  \frac1m\sum_{t=0}^{m-1}\mathbf 1_{\{x_{1,t}\ne x_{2,t}\}} .
\]
Let $\lambda_\Delta=(x\mapsto(x,x))_\#\mu_P$ be the diagonal joining of
$(X,d_{\beta,\eta},\mu_P,T)$ with itself.  Let $Q_m^\Delta$ and $Q_m^\otimes$
be the laws on $[0,1]^2$ of
\[
  (B_m(x_1,x_2),B_m(y_1,y_2))
\]
under $\lambda_\Delta^{\otimes2}$ and $(\mu_P\otimes\mu_P)^{\otimes2}$,
respectively.  Put
\[
  h(i,j)=\mathbf 1_{\{i\ne j\}},\qquad
  v_h=\operatorname{Var}_{\pi\otimes\pi}(h),
\]
and let
\[
  \sigma_h^2
  =
  v_h+2\sum_{t=1}^\infty
  \operatorname{Cov}_{\pi\otimes\pi}
  \bigl(h(Z_0),h(Z_t)\bigr),
\]
where $(Z_t)$ is the stationary Markov chain on $E\times E$ with transition
$P\otimes P$.  Then $\sigma_h^2>0$ and, for every $m\ge1$,
\[
  W_1(Q_m^\Delta,Q_m^\otimes)
  \le
  \left(
    \frac{2v_h(1+\theta)}{m(1-\theta)}
  \right)^{1/2}.
\]
Moreover,
\[
  \liminf_{m\to\infty}
  \sqrt m\, W_1(Q_m^\Delta,Q_m^\otimes)
  \ge
  \frac{\sqrt2\,\sigma_h}{\sqrt\pi}.
\]
In particular, for all sufficiently large $m$,
\[
  W_1(Q_m^\Delta,Q_m^\otimes)
  \ge
  \frac{\sigma_h}{\sqrt{2\pi m}} .
\]
Thus this normalized Markov distance-array projection has the sharp order
$m^{-1/2}$.

Finally, $Q_m^\Delta$ and $Q_m^\otimes$ are Lipschitz projections of the full
unanchored laws $\Phi_{2,m}(\lambda_\Delta)$ and
$\Phi_{2,m}(\mu_P\otimes\mu_P)$.  More precisely,
\[
  W_1(Q_m^\Delta,Q_m^\otimes)
  \le
  \frac{1}{(1-\tau)\sqrt m}\,
  W_1\bigl(\Phi_{2,m}(\lambda_\Delta),
           \Phi_{2,m}(\mu_P\otimes\mu_P)\bigr).
\]
Consequently the full raw two-particle array laws remain separated along a
subsequence; indeed
\[
  \liminf_{m\to\infty}
  W_1\bigl(\Phi_{2,m}(\lambda_\Delta),
           \Phi_{2,m}(\mu_P\otimes\mu_P)\bigr)
  \ge
  \frac{\sqrt2(1-\tau)\sigma_h}{\sqrt\pi}.
\]
\end{proposition}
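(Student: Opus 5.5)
The plan is to reduce everything to the stationary pair chain and then run the same two arguments as in Proposition~\ref{prop:doubling-averaged-rate}: a same-first-coordinate coupling for the upper bound, and a Kantorovich--Rubinstein test function for the lower bound.

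\emph{Setup.} Under $\mu_P\otimes\mu_P$ the pair $Z_t=(x_{1,t},x_{2,t})$ is the stationary Markov chain on $E\times E$ with transition $P\otimes P$ and law $\pi\otimes\pi$. Hence $B_m=\frac1m\sum_{t<m}h(Z_t)$. Under $\lambda_\Delta^{\otimes2}$ we have $y_i=x_i$, so $Q_m^\Delta=\Law(B_m,B_m)$. Under the product joining the $X$- and $Y$-samples are independent, so $Q_m^\otimes=\Law(B_m,B_m')$ with $B_m'$ an independent copy of $B_m$.

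\emph{Upper bound.} Couple the two laws by keeping the first coordinate. This gives
\[
W_1(Q_m^\Delta,Q_m^\otimes)\le\mathbb E|B_m-B_m'|\le\sqrt{2\operatorname{Var}B_m}.
\]
To bound the variance, I use that $P\otimes P$ is self-adjoint on $L^2(\pi\otimes\pi)$ with eigenvalues $\alpha\alpha'$, where $\alpha,\alpha'\in\operatorname{Spec}P$. By irreducibility and aperiodicity, the eigenvalue $1$ is simple. Every eigenvalue other than $1$ has modulus at most $\max(\theta,\theta^2)=\theta$.

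Let $\sigma_h$ be the spectral measure of the centered function $h-\mathbb Eh$. It has total mass $v_h$ and is supported in $[-\theta,\theta]$. Then
\[
\operatorname{Var}\Bigl(\sum_{t<m}h(Z_t)\Bigr)=\int\sum_{s,t<m}\gamma^{|s-t|}\,\dd\sigma_h(\gamma)\le m\,v_h\,\frac{1+\theta}{1-\theta},
\]
because $\sum_{k\in\mathbb Z}|\gamma|^{|k|}\le(1+\theta)/(1-\theta)$. This yields the stated upper bound.

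\emph{Positivity of $\sigma_h^2$.} The same spectral representation gives
\[
\sigma_h^2=\int\frac{1+\gamma}{1-\gamma}\,\dd\sigma_h\ge\frac{1-\theta}{1+\theta}\,v_h.
\]
Moreover $v_h>0$: $\pi$ has full support by irreducibility and $|E|\ge2$, so $h$ is not $\pi\otimes\pi$-a.s.\ constant.

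\emph{Asymptotic lower bound.} Use the $1$-Lipschitz test function $\psi(a,b)=|a-b|/\sqrt2$ in the Kantorovich--Rubinstein formula. Since $\psi$ vanishes on the diagonal, this gives
\[
W_1(Q_m^\Delta,Q_m^\otimes)\ge\frac{1}{\sqrt2}\,\mathbb E|B_m-B_m'|.
\]
Next apply the Markov chain CLT for a finite ergodic chain: $\sqrt m(B_m-\mathbb EB_m)\Rightarrow N(0,\sigma_h^2)$. Hence $\sqrt m(B_m-B_m')\Rightarrow N(0,2\sigma_h^2)$.

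The second moments $m\,\mathbb E(B_m-B_m')^2$ are uniformly bounded by the variance estimate above. So the family is uniformly integrable, and the first absolute moments converge:
\[
\sqrt m\,\mathbb E|B_m-B_m'|\to\sqrt2\,\sigma_h\sqrt{2/\pi}.
\]
Dividing by $\sqrt2$ gives the $\liminf$ bound $\sqrt2\,\sigma_h/\sqrt\pi$. The eventual bound $\sigma_h/\sqrt{2\pi m}$ follows because $\sigma_h/\sqrt{2\pi}$ is strictly less than this limit.

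I expect this step to be the main obstacle. One must carefully justify the CLT and the passage from weak convergence to convergence of $\mathbb E|\cdot|$, and check that $\sigma_h^2$ is the correct asymptotic variance.

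\emph{Lipschitz projection.} For $t<m$, take the diagonal-time entries $r_t=d_{\beta,\eta}(T^tx_1,T^tx_2)$ of the array. If $x_{1,t}\ne x_{2,t}$ then $r_t\ge1$; otherwise $r_t\le\tau$. Define
\[
g(r)=\min\bigl(1,\max\bigl(0,(r-\tau)/(1-\tau)\bigr)\bigr).
\]
This $g$ is $(1-\tau)^{-1}$-Lipschitz, and $g(r_t)=\mathbf 1_{\{x_{1,t}\ne x_{2,t}\}}$.

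The map from the full array to $\bigl(\frac1m\sum_t g(r^X_t),\frac1m\sum_t g(r^Y_t)\bigr)$ uses $m$ disjoint coordinates on each side. By Cauchy--Schwarz it has Euclidean Lipschitz constant $\bigl((1-\tau)\sqrt m\bigr)^{-1}$, and it pushes $\Phi_{2,m}$ forward to $Q_m$. Wasserstein contraction under Lipschitz pushforward gives the displayed inequality. Combining it with the $\liminf$ bound from the previous step gives the final assertion.
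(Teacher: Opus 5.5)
Your proposal is correct and follows the paper's proof essentially step for step. The shared steps are: the pair chain $P\otimes P$ with spectral radius at most $\theta$ on mean-zero functions; the same-first-coordinate coupling for the upper bound; the Markov CLT plus uniform integrability, with the test function $|a-b|/\sqrt2$, for the lower rate; and a $(1-\tau)^{-1}$-Lipschitz threshold function on the diagonal-time entries $d_{\beta,\eta}(T^tx_1,T^tx_2)$ for the projection inequality. Your spectral-measure computation also gives the slightly sharper bound $\sigma_h^2\ge\frac{1-\theta}{1+\theta}v_h$, where the paper only argues qualitatively that $\sigma_h^2>0$; however, you should rename that spectral measure, since $\sigma_h$ already denotes the asymptotic standard deviation.
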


\begin{proof}
Let $(X_t)$ and $(X'_t)$ be independent stationary Markov chains with
transition matrix $P$ and common stationary law $\pi$.  Then
$Z_t=(X_t,X'_t)$ is a stationary reversible Markov chain on $E\times E$ with
transition matrix $R=P\otimes P$ and stationary law $\Pi=\pi\otimes\pi$.
Since $P$ is irreducible and aperiodic, the product chain $R=P\otimes P$ is
also irreducible and aperiodic.
The nontrivial eigenvalues of $R$ are products of eigenvalues of $P$, with at
least one factor different from $1$.  Hence the $L^2_0(\Pi)$ spectral radius
of $R$ is at most $\theta$.

Set $\bar h=h-\int h\,\dd\Pi$, and write
\[
  \gamma_t=\operatorname{Cov}_\Pi(h(Z_0),h(Z_t))
  =
  \langle \bar h,R^t\bar h\rangle_{L^2(\Pi)} .
\]
The spectral bound gives $|\gamma_t|\le \theta^t v_h$.  Since
\[
  \operatorname{Var}(B_m)
  =
  \frac{1}{m^2}
  \left(
    mv_h+2\sum_{t=1}^{m-1}(m-t)\gamma_t
  \right),
\]
we obtain
\[
  \operatorname{Var}(B_m)
  \le
  \frac{v_h}{m}
  \left(1+2\sum_{t=1}^{m-1}\theta^t\right)
  \le
  \frac{v_h(1+\theta)}{m(1-\theta)}.
\]

Under $\lambda_\Delta^{\otimes2}$ the two coordinates in $Q_m^\Delta$ are the
same random variable $B_m$, while under the product joining they are
independent copies $B_m,B'_m$.  Coupling these laws by using the same first
coordinate gives
\[
  W_1(Q_m^\Delta,Q_m^\otimes)
  \le
  \mathbb E|B_m-B'_m|
  \le
  \sqrt{2\operatorname{Var}(B_m)},
\]
which proves the upper bound.

It remains to prove the lower rate.  Because $P$ is finite-state, irreducible,
and aperiodic, the standard central limit theorem for additive functionals of
finite Markov chains applies to $\bar h$; see, for example,
\cite{meyn2009markov,levin2017markov}.  Thus
\[
  \sqrt m\left(B_m-\int h\,\dd\Pi\right)
  \Rightarrow
  N(0,\sigma_h^2).
\]
The spectral representation
\[
  \sigma_h^2
  =
  \sum_{\rho\in\operatorname{Spec}(R|_{L^2_0(\Pi)})}
  \frac{1+\rho}{1-\rho}\,\|E_\rho\bar h\|_2^2
\]
also shows $\sigma_h^2>0$: since $R$ is irreducible and aperiodic, its
nontrivial eigenvalues lie in $(-1,1)$, so all coefficients
$(1+\rho)/(1-\rho)$ are positive; moreover $\bar h\ne0$ because $|E|\ge2$ and
$\pi$ has full support.  Applying the same central limit theorem to an
independent copy gives
\[
  \sqrt m\,(B_m-B'_m)\Rightarrow G-G' .
\]
The variance bound above gives a uniform $L^2$ bound for
$\sqrt m\,(B_m-B'_m)$, and hence uniform integrability of its absolute value.
Therefore
\[
  \sqrt m\,\mathbb E|B_m-B'_m|
  \longrightarrow
  \mathbb E|G-G'|
  =
  \frac{2\sigma_h}{\sqrt\pi},
\]
where $G,G'$ are independent $N(0,\sigma_h^2)$ variables.  The function
\[
  \varphi(a,b)=\frac{|a-b|}{\sqrt2},\qquad 0\le a,b\le1,
\]
is $1$-Lipschitz for the Euclidean metric on $[0,1]^2$ and vanishes on the
diagonal.  Kantorovich--Rubinstein duality for $W_1$
\cite[Ch.~1]{villani2009optimal} therefore yields
\[
  W_1(Q_m^\Delta,Q_m^\otimes)
  \ge
  \frac{1}{\sqrt2}\mathbb E|B_m-B'_m|.
\]
Taking the lower limit proves the asserted asymptotic lower bound, and the
eventual bound follows immediately.

It remains only to connect this normalized observable to the distance-array
coordinates.  Choose a function $\chi:[0,1+\tau]\to[0,1]$ such that
\[
  \chi=0\text{ on }[0,\tau],\qquad
  \chi=1\text{ on }[1,1+\tau],\qquad
  \operatorname{Lip}(\chi)\le\frac1{1-\tau}.
\]
For every $x,x'\in X$,
\[
  \mathbf 1_{\{x_0\ne x'_0\}}=\chi(d_{\beta,\eta}(x,x')).
\]
Hence $B_m(x_1,x_2)$ is obtained from the $m$ array coordinates
$d_{\beta,\eta}(T^t x_1,T^t x_2)$, $0\le t<m$, by applying $\chi$ and averaging.
The map from the full two-particle array to $(B_m^X,B_m^Y)$ depends on the
$m$ coordinates
\[
  d_{\beta,\eta}(T^t x_1,T^t x_2),\qquad 0\le t<m,
\]
and the disjoint $m$ coordinates
\[
  d_{\beta,\eta}(T^t y_1,T^t y_2),\qquad 0\le t<m .
\]
The two output coordinates have orthogonal gradients.  More explicitly, at
points where $\chi$ is differentiable the Jacobian of this projection has
diagonal Gram matrix whose two diagonal entries are bounded by
\[
  \frac{1}{m^2}\sum_{t=0}^{m-1}\operatorname{Lip}(\chi)^2
  \le
  \frac{1}{(1-\tau)^2m}.
\]
Since $\chi$ may be chosen Lipschitz and is differentiable almost everywhere,
the same bound follows for difference quotients by approximation along line
segments.  Thus the projection has Lipschitz constant at most
$1/((1-\tau)\sqrt m)$ for the coordinate Euclidean metrics.  Pushing
Wasserstein distance forward by this map, using the Lipschitz contraction
property of Wasserstein distance \cite[Ch.~6]{villani2009optimal}, gives the displayed projection
inequality, and combining it with the asymptotic lower bound gives the final
separation estimate for the raw arrays.
\end{proof}

\section{Examples and consequences}

\begin{proposition}[Graph-joining quotient class for compact rotations]
\label{prop:rotation-graph-class}
In the setting of Proposition~\ref{prop:single-fails}, the graph joinings
\[
  \lambda_h=(x\mapsto(x,x+h))_\#m_G,\qquad h\in G,
\]
all lie in one coordinate-isomorphism class and have the same unanchored
distance-array laws:
\[
  \Phi_{n,m}(\lambda_h)=\Phi_{n,m}(\lambda_{h'})
  \qquad\forall h,h'\in G,\ \forall n,m\ge1.
\]
Moreover, if the random variable $d(X_1,X_2)$ is non-degenerate for
independent Haar points $X_1,X_2$, then this graph-joining class is separated
from the product joining by the two-particle array $\Phi_{2,1}$.
\end{proposition}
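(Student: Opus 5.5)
I will prove the three assertions directly from the definitions, since everything reduces to translation invariance of $d$ on the compact abelian group $G$.

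\textbf{Coordinate isomorphism.} Fix $h,h'\in G$, let $A=\Id_G$, and let $B$ be the translation $y\mapsto y+(h'-h)$. Then $B$ preserves Haar measure $m_G$, commutes with $T_g$ because $G$ is abelian, and is an isometry of $d$ by translation invariance. Hence $B\in\Aut_{\rm md}(\Xbf)$, and trivially $A\in\Aut_{\rm md}(\Xbf)$. Pushing forward,
\[
(A\times B)_\#\lambda_h=(x\mapsto(x,x+h'))_\#m_G=\lambda_{h'},
\]
so all the $\lambda_h$ are coordinate-isomorphic.

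\textbf{Equality of unanchored laws.} For any $A,B\in\Aut_{\rm md}$, the map $A\times B$ preserves every mark $M^{a,b}$ and every kernel $K^{a,b}$. In fact, in the present case we can check this pointwise rather than only almost everywhere: translation commutes with $T_g^a$, and $d$ is translation invariant, so $\Dcal^Y_{n,m}$ is unchanged when every $y_i$ is shifted by $h'-h$. Therefore the array map composed with $(A\times B)^{\otimes n}$ equals the array map itself, and
\[
\Phi_{n,m}(\lambda_{h'})=\Phi_{n,m}(\lambda_h)\qquad\text{for all } n,m\ge1.
\]

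\textbf{Separation from the product.} Under $\lambda_h^{\otimes 2}$ we have $y_i=x_i+h$, so $d(y_1,y_2)=d(x_1,x_2)$ almost surely. In $\Phi_{2,1}$, the $(1,2)$ entries $(r,s)=(d(x_1,x_2),d(y_1,y_2))$ therefore satisfy $r=s$ almost surely. Under $\mu\otimes\nu$, $r$ and $s$ are independent copies of $R=d(X_1,X_2)$. Take the bounded continuous test function $F=(r-s)^2$. It integrates to $0$ under $\lambda_h$ and to $2\operatorname{Var}(R)$ under the product joining. By non-degeneracy $\operatorname{Var}(R)>0$, so the laws differ and $\Phi_{2,1}(\lambda_h)\neq\Phi_{2,1}(m_G\otimes m_G)$.

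No step is a real obstacle. The only care needed is to note that the product joining is not in the same class, and this follows from the separation itself. Because $\Phi_{2,1}$ agrees across the whole graph-joining class, the separation holds for the class as a whole.
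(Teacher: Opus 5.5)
Your proof is correct and follows the paper's argument: the translation $\Id\times\tau_{h'-h}$ gives the coordinate isomorphism, and the test function $(r-s)^2$, which separates the law of $(R,R)$ from that of $(R,R')$, gives the $\Phi_{2,1}$ separation. The only difference is in the middle step: you show the array map is invariant under the translation symmetry, whereas the paper observes directly that under $\lambda_h^{\otimes n}$ the $Y$-array coincides with the $X$-array, so its law visibly does not depend on $h$.
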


\begin{proof}
Let $\tau_t(x)=x+t$.  Translation invariance gives
$d(\tau_t x,\tau_t x')=d(x,x')$, and $\tau_t$ commutes with $T_g$ because
$G$ is abelian.  Therefore $\tau_t\in\Aut_{\rm md}(G,d,m_G,T_g)$ and
\[
  (\Id\times\tau_{h'-h})_\#\lambda_h=\lambda_{h'}.
\]
This proves that all graph joinings lie in one coordinate-isomorphism class.

For the array laws, if $(x_i,y_i)_{i=1}^n$ is sampled from
$\lambda_h^{\otimes n}$, then $y_i=x_i+h$ for every $i$.  Hence, for all
$a,b$ and all $i,j$,
\[
  d(T_g^a y_i,T_g^b y_j)
  =
  d(x_i+h+ag,x_j+h+bg)
  =
  d(T_g^a x_i,T_g^b x_j).
\]
Thus the $Y$-array is identical to the $X$-array and the law is independent of
$h$.

For $n=2,m=1$, the graph joining gives the pair
\[
  \bigl(d(x_1,x_2),d(y_1,y_2)\bigr)=(R,R)
\]
with $R=d(X_1,X_2)$.  Under the product joining the corresponding pair is
$(R,R')$, where $R'$ is an independent copy of $R$.  If $R$ is non-degenerate,
then
\[
  \mathbb E(R-R)^2=0,
  \qquad
  \mathbb E(R-R')^2=2\operatorname{Var}(R)>0.
\]
Thus the laws of $(R,R)$ and $(R,R')$ are different, and therefore
$\Phi_{2,1}(\lambda_h)\ne\Phi_{2,1}(m_G\otimes m_G)$.
\end{proof}

\begin{proposition}[A low-order Bernoulli example]
\label{prop:bernoulli-low-order}
Let $X=\{0,1\}^{\mathbb Z}$, let $\mu=(\frac12,\frac12)^{\mathbb Z}$, and let
$T$ be the two-sided Bernoulli shift.  Put
\[
  d(x,x')=\sum_{k\in\mathbb Z}2^{-|k|-2}|x_k-x'_k|.
\]
Consider the two identical systems $\Xbf=\Ybf=(X,d,\mu,T)$, and choose dense
anchor families with first anchor equal to the all-zero sequence $0^\infty$ in
both coordinates.  Then for every $p\ge1$,
\[
  \widetilde{\operatorname{Dep}}_{1,1,1,p}(\Xbf,\Ybf)>0 .
\]
\end{proposition}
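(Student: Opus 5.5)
Since the coefficient is a supremum over $\J(T,T)$, it suffices to exhibit one joining $\lambda$ with $\widetilde\Phi_{1,1,1}(\lambda)\ne\widetilde\Phi_{1,1,1}(\mu\otimes\mu)$. Then $W_p$, being a metric on probability measures on the compact array cube, is strictly positive for this $\lambda$. The natural choice is the diagonal joining $\lambda_\Delta=(x\mapsto(x,x))_\#\mu$, which lies in $\J(T,T)$ exactly as in Proposition~\ref{prop:bernoulli-unanchored-bound}.

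At level $n=m=R=1$ the anchored array records, for a single sampled pair $(x,y)$, the diagonal entries $d(x,x)=0$ and $d(y,y)=0$, which are deterministic. It also records the anchored coordinates $u=d(x,0^\infty)$ and $v=d(y,0^\infty)$. So $\widetilde\Phi_{1,1,1}(\lambda)$ is, up to the trivial zero coordinates, the law of $(u,v)$. Write
\[
  U=d(X,0^\infty)=\sum_{k\in\mathbb Z}2^{-|k|-2}X_k ,
\]
where the $X_k$ are i.i.d. Bernoulli$(1/2)$. Under $\lambda_\Delta$ we have $y=x$, so the law of $(u,v)$ is $\Law(U,U)$. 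Under $\mu\otimes\mu$ it is $\Law(U,U')$, where $U'$ is an independent copy of $U$.

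To separate these two laws, I would reuse the test function from Proposition~\ref{prop:bernoulli-unanchored-bound}. Take $\psi(u,v)=c\,(u-v)^2$, with $c>0$ chosen so that $\psi$ is $1$-Lipschitz on the cube $[0,3/4]^2$ (for example $c=2/(3\sqrt2)$). Then $\int\psi\,\dd\Phi_\Delta=0$, while $\int\psi\,\dd\Phi_\otimes=2c\operatorname{Var}(U)$. The same geometric-sum computation as before gives
\[
  \operatorname{Var}(U)=\sum_k 2^{-2|k|-4}\cdot\tfrac14=\tfrac5{192}>0 .
\]
Kantorovich--Rubinstein duality then gives $W_1\ge 2c\cdot 5/192>0$, and $W_p\ge W_1$ yields the claim for every $p\ge1$. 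Even without the quantitative bound, non-degeneracy of $U$ already shows that the two laws differ: $\mathbb E(U-U)^2=0\ne\mathbb E(U-U')^2$.

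There is no real obstacle. The only points needing care are, first, that the anchor $0^\infty$ is allowed: it lies in $\supp\mu=X$, so it can legitimately begin a dense anchor sequence. Second, the deterministic diagonal coordinates do not affect Lipschitz constants or distances, because $\psi$ ignores them.
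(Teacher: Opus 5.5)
Your proposal is correct and follows the paper's proof: you use the diagonal joining $\lambda_\Delta$, observe that the anchored coordinates $(d(x,0^\infty),d(y,0^\infty))$ are distributed as $(U,U)$ versus $(U,U')$, and use non-degeneracy of $U$ to separate the laws, so $W_p>0$. Your extra Kantorovich--Rubinstein step, reusing the test function from Proposition~\ref{prop:bernoulli-unanchored-bound}, is a valid addition that gives the explicit bound $\widetilde{\operatorname{Dep}}_{1,1,1,p}(\Xbf,\Ybf)\ge 5/(144\sqrt2)$, which the paper does not state for this proposition.
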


\begin{proof}
Let $\lambda_\Delta=(x\mapsto(x,x))_\#\mu$ be the diagonal joining.  The
anchored observable at order $(n,m,R)=(1,1,1)$ contains the two real variables
\[
  D_X=d(x,0^\infty),\qquad D_Y=d(y,0^\infty).
\]
Under $\lambda_\Delta$ one has $D_X=D_Y$ almost surely.  Under the product
joining $\mu\otimes\mu$, the variables $D_X$ and $D_Y$ are independent copies.
The variable
\[
  D_X=\sum_{k\in\mathbb Z}2^{-|k|-2}x_k
\]
is not almost surely constant, since its variance is
\[
  \frac14\sum_{k\in\mathbb Z}2^{-2|k|-4}>0.
\]
Consequently,
\[
  \mathbb E_{\lambda_\Delta}(D_X-D_Y)^2=0,
  \qquad
  \mathbb E_{\mu\otimes\mu}(D_X-D_Y)^2=2\operatorname{Var}(D_X)>0.
\]
Hence the law of $(D_X,D_Y)$ under $\lambda_\Delta$ differs from its law under
$\mu\otimes\mu$.  Since $(D_X,D_Y)$ is a coordinate projection of the anchored
order $(1,1,1)$ array, the full anchored array law
$\widetilde\Phi_{1,1,1}(\lambda_\Delta)$ therefore differs from
$\widetilde\Phi_{1,1,1}(\mu\otimes\mu)$.  Since $W_p$ is a metric on
probability measures on the compact finite array cube and the dependence
coefficient is a supremum over joinings, the displayed coefficient is strictly
positive.
\end{proof}

\begin{proposition}[Common factors create nonzero anchored dependence]
\label{prop:common-factor}
Suppose $\Xbf$ and $\Ybf$ have a common factor
$\Zbf=(Z,\eta,R)$: there are factor maps $\pi_X:X\to Z$ and $\pi_Y:Y\to Z$
with $(\pi_X)_\#\mu=(\pi_Y)_\#\nu=\eta$,
$\pi_X\circ T=R\circ\pi_X$, and $\pi_Y\circ S=R\circ\pi_Y$.  Assume that the
factor is seen nontrivially from both systems, in the sense that
$\pi_X^{-1}\mathcal B_Z$ is nontrivial modulo $\mu$ and
$\pi_Y^{-1}\mathcal B_Z$ is nontrivial modulo $\nu$.  Then the systems are not
disjoint.  Consequently, for any dense support anchors and any $p\ge1$, at least one
anchored coefficient
$\widetilde{\operatorname{Dep}}_{n,m,R,p}$ is strictly positive.
\end{proposition}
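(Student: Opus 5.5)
The plan is to exhibit a non-product joining directly: the relatively independent joining over the common factor. Then I will invoke Theorem~\ref{thm:zero}, or equivalently Theorem~\ref{thm:anchored-disjoint}, to get a positive anchored coefficient.

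First, disintegrate. Let $\mu=\int\mu_z\,\dd\eta(z)$ and $\nu=\int\nu_z\,\dd\eta(z)$ be the disintegrations of $\mu$ and $\nu$ over $\pi_X$ and $\pi_Y$. These are standard Borel facts, which the paper allows us to assume. Define
\[
  \lambda=\int_Z \mu_z\otimes\nu_z\,\dd\eta(z).
\]
The marginals are $\mu$ and $\nu$ by construction. For invariance, I will use the equivariance relations $T_\#\mu_z=\mu_{Rz}$ and $S_\#\nu_z=\nu_{Rz}$ for $\eta$-a.e.\ $z$. These follow from uniqueness of disintegration: $T_\#\mu$ disintegrates over $\pi_X$ as $\int T_\#\mu_z\,\dd\eta(z)$, and $T_\#\mu_z$ is carried by $\pi_X^{-1}(Rz)$ because $\pi_X\circ T=R\circ\pi_X$. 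Hence $(T\times S)_\#\lambda=\int\mu_{Rz}\otimes\nu_{Rz}\,\dd\eta(z)=\lambda$, using $R_\#\eta=\eta$, which follows from $(\pi_X)_\#\mu=\eta$ and $T_\#\mu=\mu$. So $\lambda\in\J(T,S)$.

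Next, show $\lambda\ne\mu\otimes\nu$. By the nontriviality hypothesis there is a Borel $C\subset Z$ with $0<\eta(C)<1$. Indeed, a set $B\in\pi_X^{-1}\mathcal B_Z$ with $0<\mu(B)<1$ modulo null sets has the form $\pi_X^{-1}C$, and then $\eta(C)=\mu(B)$. Put $B_X=\pi_X^{-1}C$ and $B_Y=\pi_Y^{-1}C$. Since $\mu_z$ and $\nu_z$ live on the fibres over $z$,
\[
  \lambda(B_X\times B_Y)=\int \mathbf 1_C(z)^2\,\dd\eta=\eta(C)\ne\eta(C)^2=\mu(B_X)\nu(B_Y).
\]
So $\lambda$ is not the product joining, and the systems are not disjoint. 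Only one of the two nontriviality hypotheses is actually needed once $C$ is fixed, since $\eta(C)$ is the same number on both sides.

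Finally, apply Theorem~\ref{thm:zero}. If every anchored coefficient vanished, the systems would be disjoint, which is a contradiction. More concretely, by Theorem~\ref{thm:anchored} some $\widetilde\Phi_{n,m,R}(\lambda)$ differs from $\widetilde\Phi_{n,m,R}(\mu\otimes\nu)$. Since $W_p$ is a metric on the compact cube, the supremum defining $\widetilde{\operatorname{Dep}}_{n,m,R,p}$ is at least this positive distance.

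The main obstacle is the measure-theoretic care in the invariance step. The equivariance of the disintegration holds only almost everywhere. The factor maps satisfy $\pi_X\circ T=R\circ\pi_X$ only modulo null sets, and $T$ need not be invertible. I will handle this by checking invariance on product rectangles $A\times B$. For these,
\[
  \lambda(A\times B)=\int \mathbb E_\mu[\mathbf 1_A\mid\pi_X=z]\;\mathbb E_\nu[\mathbf 1_B\mid\pi_Y=z]\,\dd\eta(z),
\]
and invariance reduces to the identity $\mathbb E_\mu[\mathbf 1_A\circ T\mid\pi_X]=\mathbb E_\mu[\mathbf 1_A\mid\pi_X]\circ R\circ\pi_X$, which follows from the intertwining relation. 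A $\pi$-system argument then extends invariance from rectangles to all Borel sets.
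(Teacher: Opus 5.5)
Your route is the paper's: the relatively independent joining over $Z$, the rectangle computation $\eta(C)$ versus $\eta(C)^2$, then Theorem~\ref{thm:zero}. The non-product computation and the last step are fine. The gap is the invariance step, exactly where you try to handle non-invertibility.

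Uniqueness of disintegration does not give $T_\#\mu_z=\mu_{Rz}$. In $\int T_\#\mu_z\,\dd\eta(z)$ the measure $T_\#\mu_z$ sits on the fibre over $Rz$, so reindexing by $w=Rz$ only yields $\mu_w=\int T_\#\mu_z\,\dd\eta^w(z)$. Here $\eta=\int\eta^w\,\dd\eta(w)$ is the disintegration of $\eta$ over $R$. This is your equivariance only when $\eta^w$ is a point mass, i.e.\ when $R$ is invertible modulo $\eta$. Your conditional-expectation identity fails for the same reason: your right-hand side is $\mathbb E_\mu[\mathbf 1_A\circ T\mid R\circ\pi_X]$, the conditional expectation on the smaller $\sigma$-algebra $T^{-1}\pi_X^{-1}\mathcal B_Z$, not on $\pi_X$.

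Worse, the joining itself can fail to be invariant. For rectangles, $(T\times S)_\#\lambda$ averages $T_\#\mu_z\otimes S_\#\nu_z$ over $\eta^w$, whereas $\lambda$ takes the product of the two averages. Here is a counterexample:
\begin{itemize}
\item Let $X=Y=Z=\{0,1\}^{\mathbb N_0}$, with $\mu=\nu=\eta$ the $(\tfrac12,\tfrac12)$-Bernoulli measure, $T=S=R=\sigma$ the one-sided shift, and $\pi_X=\pi_Y=\sigma$.
\item Writing $iw=(i,w_0,w_1,\dots)$, one has $\mu_w=\nu_w=\tfrac12(\delta_{0w}+\delta_{1w})$. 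Hence $T_\#\mu_w=\delta_w\ne\mu_{\sigma w}$.
\item For $A=\{x_0=0\}$, $\mathbb E_\mu[\mathbf 1_A\circ T\mid\pi_X]=\mathbf 1_{\{x_1=0\}}$, while $\mathbb E_\mu[\mathbf 1_A\mid\pi_X]\equiv\tfrac12$.
\item Under $\lambda$, the bits $x_0,y_0$ are independent and fair with $\sigma x=\sigma y$, so $\lambda(x_0\ne y_0)=\tfrac12$. But $(\sigma\times\sigma)_\#\lambda$ is the diagonal measure, which gives $\{x_0\ne y_0\}$ mass $0$.
\end{itemize}
Thus $\lambda\notin\J(T,S)$, although all hypotheses of the proposition hold.

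The paper's proof has the same gap implicitly. It asserts invariance by citing the standard relatively independent joining, which is a fact about invertible systems, while this framework admits non-invertible maps. There are two repairs.
\begin{itemize}
\item If $R$ is invertible modulo $\eta$, as for the Kronecker factors in Corollary~\ref{cor:kronecker-obstruction}, then $\eta^w=\delta_{R^{-1}w}$ and your argument is correct as written.
\item In general, pass to natural extensions $\tilde X,\tilde Y,\tilde Z$ and lift $\pi_X,\pi_Y$ coordinatewise to factor maps onto $\tilde Z$. Form the relatively independent joining $\tilde\lambda$ over $\tilde Z$; it is invariant because $\tilde R$ is invertible. Push it to $X\times Y$ by the time-zero coordinate maps. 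The image lies in $\J(T,S)$ and still satisfies $\pi_X(x)=\pi_Y(y)$ almost surely, so it gives $\pi_X^{-1}C\times\pi_Y^{-1}C$ mass $\eta(C)\ne\eta(C)^2$. The rest of your proof then goes through unchanged.
\end{itemize}
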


\begin{proof}
The relatively independent joining over $Z$,
\[
  \lambda=\int_Z \mu_z\otimes\nu_z\,\dd\eta(z),
\]
where $\mu=\int\mu_z\,\dd\eta(z)$ and $\nu=\int\nu_z\,\dd\eta(z)$ are the
disintegrations over the common factor, is invariant under $T\times S$ and has
marginals $\mu,\nu$; this is the standard relatively independent joining over
a factor \cite[Ch.~6]{glasner2003ergodic}.  Since the factor is nontrivial, choose
$B\in\mathcal B_Z$ with $0<\eta(B)<1$.  Set
$A_X=\pi_X^{-1}B$ and $A_Y=\pi_Y^{-1}B$.  Under the relatively independent
joining,
\[
  \lambda(A_X\times A_Y)=\eta(B),
\]
whereas
\[
  (\mu\otimes\nu)(A_X\times A_Y)=\eta(B)^2.
\]
Thus $\lambda\ne\mu\otimes\nu$.  The systems are not disjoint.  If all anchored
coefficients were zero, Theorem~\ref{thm:zero} would imply disjointness, a
contradiction.
\end{proof}

\begin{corollary}[Kronecker factor obstruction]
\label{cor:kronecker-obstruction}
Let $\Xbf=(X,d_X,\mu,T)$ be a compact metric model of an ergodic system.  If
the underlying system has a nontrivial Kronecker factor, then there exists an
ergodic compact abelian rotation $\Ybf=(Y,d_Y,\nu,S)$ such that, for any dense
support anchor families and any $p\ge1$,
\[
  \widetilde{\operatorname{Dep}}_{n,m,R,p}(\Xbf,\Ybf)>0
\]
for at least one finite triple $(n,m,R)$.
\end{corollary}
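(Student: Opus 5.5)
The plan is to reduce the statement to Proposition~\ref{prop:common-factor}. To do so we take for $\Ybf$ a compact metric model of the Kronecker factor of $\Xbf$ itself.

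Let $\pi_X:(X,\mu,T)\to(Z,\eta,R)$ be a nontrivial Kronecker factor of the ergodic system. By the Halmos--von Neumann theorem, we may realize it as an ergodic rotation $R z=z+g$ on a compact abelian group $Z$ with Haar measure $\eta=m_Z$; since the measure spaces are standard, $Z$ can be taken metrizable. Nontriviality of the factor means $Z$ is not a single point modulo null sets, i.e.\ $\pi_X^{-1}\mathcal B_Z$ is nontrivial modulo $\mu$. Equip $Z$ with a translation-invariant compatible metric $d_Z$, which exists for compact metrizable groups, for instance by averaging any compatible metric over Haar measure or by using a countable character expansion. This gives
\[
  \Ybf=(Z,d_Z,m_Z,R),
\]
an ergodic compact abelian rotation. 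Here $R$ is continuous, Haar measure has full support, and so the model is support-reduced.

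Next we verify the hypotheses of Proposition~\ref{prop:common-factor} with common factor $\Zbf=(Z,m_Z,R)$. On the $X$ side the factor map is $\pi_X$, which intertwines $T$ and $R$ modulo null sets. On the $Y$ side the factor map is $\pi_Y=\Id_Z$, which trivially intertwines $R$ with itself and pushes $m_Z$ to itself. Nontriviality on the $X$ side was noted above. On the $Y$ side, $\Id_Z^{-1}\mathcal B_Z=\mathcal B_Z$ is nontrivial modulo $m_Z$ because $Z$ is not a point, so some Borel set has Haar measure strictly between $0$ and $1$. Proposition~\ref{prop:common-factor} then shows that $\Xbf$ and $\Ybf$ are not disjoint. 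By Theorem~\ref{thm:zero}, for any dense support anchors and any $p\ge1$, some $\widetilde{\operatorname{Dep}}_{n,m,R,p}(\Xbf,\Ybf)$ is strictly positive. We could also bypass Proposition~\ref{prop:common-factor} and exhibit the non-product joining directly, namely $(\Id\times\pi_X)_\#\mu$ on the graph of $\pi_X$; this agrees with the relatively independent joining over $Z$.

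The main obstacle is the realization step: making the Kronecker factor concrete as a compact metrizable abelian group rotation with a compatible translation-invariant metric, and making sure the factor map is defined and equivariant only modulo null sets, which is the convention the paper adopts throughout. A secondary point is that the definition of a "nontrivial" Kronecker factor must exclude the one-point group. With that reading, $\eta(B)\in(0,1)$ for some $B$, and the computation in Proposition~\ref{prop:common-factor} goes through.
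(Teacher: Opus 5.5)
Your proposal is correct and follows the paper's proof essentially verbatim. You realize the Kronecker factor as an ergodic compact abelian group rotation $\Ybf$, treat it as a common factor of $\Xbf$ (via the composed factor map) and of $\Ybf$ (via the identity), invoke Proposition~\ref{prop:common-factor} for non-disjointness, and conclude with Theorem~\ref{thm:zero}. Your extra remarks on metrizability, the translation-invariant metric, and the identification of the relatively independent joining with the graph joining $(\Id\times\pi_X)_\#\mu$ are correct but not needed.
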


\begin{proof}
A nontrivial Kronecker factor is measurably isomorphic to an ergodic rotation
on a compact abelian group \cite[Ch.~2]{petersen1983ergodic}.  Choose such a compact abelian group rotation
model $\Ybf=(Y,d_Y,\nu,S)$ for the Kronecker factor, and compose the original
measure-theoretic factor map from $X$ with this model isomorphism to obtain
$\pi:X\to Y$ modulo null sets.  Then $\Xbf$ and $\Ybf$ have the common factor
$\Ybf$, seen through $\pi$ on the first coordinate and through the identity map
on the second.  The factor $\sigma$-algebra is nontrivial by assumption.
Proposition~\ref{prop:common-factor} therefore gives non-disjointness and
hence, by Theorem~\ref{thm:zero}, at least one anchored finite-pattern
dependence coefficient must be strictly positive.
\end{proof}

\begin{corollary}[Weak mixing through distance-array projections]
Let $\Xbf$ be a compact metric model of an ergodic probability-preserving
system, and fix dense support anchors on $\Xbf$.  If the underlying system of
$\Xbf$ is weakly mixing, then for every ergodic compact metric Kronecker model
$\Ybf$, for any fixed dense support anchors on $\Ybf$, and for every fixed
$p\ge1$,
\[
  \widetilde{\operatorname{Dep}}_{n,m,R,p}(\Xbf,\Ybf)=0
  \qquad\forall n,m,R\ge1.
\]
Conversely, if for some fixed $p\ge1$ this all-order vanishing holds for every
ergodic compact metric Kronecker model $\Ybf$ and for any fixed dense support
anchors on $\Ybf$, then the underlying system of $\Xbf$ is weakly mixing.
\end{corollary}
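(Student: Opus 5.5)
The plan is to reduce both directions to Theorem~\ref{thm:zero}, which converts all-order vanishing of the anchored coefficients into Furstenberg disjointness for any fixed $p\ge1$ and any dense support anchors. The remaining work is then the classical fact that an ergodic system is weakly mixing if and only if it is disjoint from every ergodic Kronecker system (ergodic rotation on a compact abelian group). We quote this fact from \cite{furstenberg1967disjointness,glasner2003ergodic}, or else prove the needed half directly.

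\textbf{Forward direction.} Assume the underlying system of $\Xbf$ is weakly mixing, and let $\Ybf$ be an ergodic compact metric Kronecker model. The first step is to show $\J(T,S)=\{\mu\otimes\nu\}$. Since every joining is a barycenter of ergodic joinings, it suffices to treat an ergodic joining $\lambda$. The $L^2(\nu)$ space of the Kronecker system is spanned by eigenfunctions $\chi$ with $\chi\circ S=e^{2\pi i\alpha}\chi$. For $f\in L^2(\mu)$ with $\int f\,\dd\mu=0$, consider the function $g=\mathbb E_\lambda[\bar\chi\mid\pi_X^{-1}\mathcal B_X]$, viewed on $X$. It satisfies $g\circ T=e^{2\pi i\alpha}g$. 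Weak mixing forces $g$ to be constant, and zero unless $\chi$ is constant. Hence $\int f\otimes\chi\,\dd\lambda=\int f\,\bar g\,\dd\mu=0$ whenever $f$ or $\chi$ has zero mean. By density of products $f\otimes\chi$, this gives $\lambda=\mu\otimes\nu$. Theorem~\ref{thm:zero} then gives vanishing of all anchored coefficients, for every $p$ and any anchors, since disjointness makes each supremum run over the single product joining.

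\textbf{Converse.} Suppose the vanishing holds for some $p$ and all ergodic compact metric Kronecker models. By Theorem~\ref{thm:zero}, $\Xbf$ is disjoint from every such $\Ybf$. If $\Xbf$ were not weakly mixing, then, being ergodic, it would have a nontrivial Kronecker factor, namely a nonconstant eigenfunction. Corollary~\ref{cor:kronecker-obstruction} then produces an ergodic compact abelian rotation $\Ybf$ with a positive anchored coefficient for any dense anchors, a contradiction. One must check that this $\Ybf$ qualifies as an ``ergodic compact metric Kronecker model''. It does: it is the rotation model of the Kronecker factor, with a compatible translation-invariant metric and Haar measure, hence support-reduced, so any dense anchors are admissible.

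\textbf{Main obstacle.} The step I expect to be delicate is the forward direction. One must justify that the conditional expectation $g$ is an eigenfunction for $T$ modulo $\mu$, using invariance of $\lambda$ under $T\times S$. One must also get the quantifier on $p$ right: the converse assumes vanishing for one $p$ only, and this suffices because Theorem~\ref{thm:zero} holds for each fixed $p$. A minor point is that Kronecker models need not have continuous dynamics in general. The argument avoids Proposition~\ref{prop:max} and uses only Theorem~\ref{thm:zero}, which requires no continuity.
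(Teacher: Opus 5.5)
Your proposal is correct and follows the paper's route: both directions reduce to Theorem~\ref{thm:zero} together with the classical equivalence between weak mixing and disjointness from every ergodic Kronecker system. The only difference is that you prove the forward half of that equivalence by the standard conditional-expectation and eigenfunction argument, and obtain the converse through Corollary~\ref{cor:kronecker-obstruction}, whereas the paper simply cites the classical fact for both halves (one harmless slip: with $\chi\circ S=e^{2\pi i\alpha}\chi$ and $g=\mathbb E_\lambda[\bar\chi\mid x]$ one gets $g\circ T=e^{-2\pi i\alpha}g$, which changes nothing since only whether the eigenvalue equals $1$ matters).
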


\begin{proof}
An ergodic system is weakly mixing if and only if it is disjoint from every
ergodic Kronecker system.  This is the classical Furstenberg disjointness
characterization of weak mixing
\cite{furstenberg1981recurrence,glasner2003ergodic,petersen1983ergodic}.
If the underlying system of $\Xbf$ is weakly mixing, it is disjoint from every
ergodic Kronecker system, and Theorem~\ref{thm:zero} gives the displayed
vanishing for every ergodic compact metric Kronecker model and for any fixed
dense support anchors on that model.  Conversely, if the displayed
vanishing holds for every ergodic compact metric Kronecker model and for any
fixed dense support anchors on that model, Theorem~\ref{thm:zero} implies
disjointness from every such Kronecker system, hence weak mixing by the same
classical characterization.
\end{proof}

\section{Finite observable certificates}

The anchored criterion has a useful consequence that is not naturally visible
from the classical definition of disjointness.  A non-product joining is usually
witnessed by abstract Borel sets in $X\times Y$.  The next result says that, in
compact metric models, such a witness can always be replaced by a finite
continuous statistic of anchored orbit-distance arrays.  Moreover this can be
done uniformly on compact families of alternative joinings.

\begin{theorem}[Finite distance-array certificates]
\label{thm:finite-certificate}
Assume $T$ and $S$ are continuous, and fix dense support anchors.  Let
$\lambda_0=\mu\otimes\nu$.
\begin{enumerate}
\item If $\lambda\in\J(T,S)$ and $\lambda\ne\lambda_0$, then there are
integers $n,m,R\ge1$ and a $1$-Lipschitz continuous function $\psi$ on the
finite anchored array cube such that
\[
  \int \psi\,\dd\widetilde\Phi_{n,m,R}(\lambda)
  \ne
  \int \psi\,\dd\widetilde\Phi_{n,m,R}(\lambda_0).
\]
\item Let $\mathcal U$ be any weak neighborhood of $\lambda_0$ in $\J(T,S)$.
Then there exist finitely many triples $(n_\ell,m_\ell,R_\ell)$, finitely many
$1$-Lipschitz continuous functions $\psi_\ell$ on the corresponding anchored
array cubes, and a number $\varepsilon>0$ such that every joining
$\lambda\in\J(T,S)$ satisfying
\[
  \left|
  \int\psi_\ell\,\dd\widetilde\Phi_{n_\ell,m_\ell,R_\ell}(\lambda)
  -
  \int\psi_\ell\,\dd\widetilde\Phi_{n_\ell,m_\ell,R_\ell}(\lambda_0)
  \right|
  <\varepsilon
  \qquad\text{for every }\ell
\]
belongs to $\mathcal U$.
\end{enumerate}
\end{theorem}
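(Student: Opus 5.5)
Part (1) follows from Theorem~\ref{thm:anchored} together with Kantorovich--Rubinstein duality. Part (2) is a compactness argument on $\J(T,S)$ that uses part (1) and Proposition~\ref{prop:compact}.

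\textbf{Part (1).} Let $\lambda\ne\lambda_0$. By Theorem~\ref{thm:anchored}, some triple $(n,m,R)$ has
\[
\widetilde\Phi_{n,m,R}(\lambda)\ne\widetilde\Phi_{n,m,R}(\lambda_0).
\]
Both are probability measures on a compact cube in Euclidean space, so their $W_1$ distance is strictly positive. Duality writes $W_1$ as a supremum of $\int\psi\,\dd(\cdot)-\int\psi\,\dd(\cdot)$ over $1$-Lipschitz $\psi$. Hence some $1$-Lipschitz $\psi$ gives different integrals. Alternatively, two distinct Borel probability measures on a compact metric space are separated by a Lipschitz function; rescaling it makes it $1$-Lipschitz.

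\textbf{Part (2).} Shrinking $\mathcal U$ if needed, I assume it is open in $\J(T,S)$. The set $\mathcal C=\J(T,S)\setminus\mathcal U$ is closed in the weakly compact set $\J(T,S)$, hence compact; continuity of $T,S$ is used exactly here. For each $\lambda\in\mathcal C$, part (1) gives a triple $(n_\lambda,m_\lambda,R_\lambda)$ and a $1$-Lipschitz $\psi_\lambda$ with
\[
\delta_\lambda:=\Bigl|\int\psi_\lambda\,\dd\widetilde\Phi_{n_\lambda,m_\lambda,R_\lambda}(\lambda)-\int\psi_\lambda\,\dd\widetilde\Phi_{n_\lambda,m_\lambda,R_\lambda}(\lambda_0)\Bigr|>0 .
\]
The anchored map $\lambda'\mapsto\widetilde\Phi_{n,m,R}(\lambda')$ is weakly continuous: this is Proposition~\ref{prop:compact}, since adding anchor coordinates keeps the array map continuous. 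Integration against the bounded continuous $\psi_\lambda$ is also continuous. So the set $V_\lambda$ of joinings $\lambda'$ whose corresponding discrepancy exceeds $\delta_\lambda/2$ is a weakly open neighbourhood of $\lambda$. Compactness gives a finite subcover $V_{\lambda_1},\dots,V_{\lambda_L}$ of $\mathcal C$. Take these $L$ triples and functions, and set $\varepsilon=\min_\ell\delta_{\lambda_\ell}/2$.

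Now suppose $\lambda$ satisfies all $L$ inequalities with this $\varepsilon$. Then $\lambda$ lies in no $V_{\lambda_\ell}$, so $\lambda\notin\mathcal C$, that is, $\lambda\in\mathcal U$.

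The potential subtleties are few. One is that the anchored array map really is continuous; this holds because $d(T^a x,a_r)$ is continuous in $x$. Another is the degenerate case $\mathcal C=\emptyset$, where any single statistic with any $\varepsilon$ works. Neither is a real obstacle, so the main work is this compactness bookkeeping.
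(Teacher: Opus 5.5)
Your proposal is correct and follows the paper's proof essentially step for step. Part (1) combines Theorem~\ref{thm:anchored} with Kantorovich--Rubinstein duality; you argue directly where the paper argues by contradiction. Part (2) is the same finite-subcover argument on the compact set $\J(T,S)\setminus\mathcal U$, with $\varepsilon$ equal to half the minimal gap. Your preliminary reduction to an open $\mathcal U$ is a small improvement, since the paper asserts compactness of $\J(T,S)\setminus\mathcal U$ without ensuring that $\mathcal U$ is open.
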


\begin{proof}
For the first assertion, suppose that no such finite statistic exists.  Then
for every $n,m,R$ the probability measures
\[
  \widetilde\Phi_{n,m,R}(\lambda),
  \qquad
  \widetilde\Phi_{n,m,R}(\lambda_0)
\]
have the same integral against every $1$-Lipschitz continuous function on the
finite array cube.  The cube is compact metric, and the $1$-Wasserstein
distance satisfies the Kantorovich--Rubinstein dual formula
\cite[Ch.~1]{villani2009optimal}.  Hence equality
of integrals against all $1$-Lipschitz test functions is equivalent to
$W_1=0$, and therefore to equality of probability measures on the compact
finite array cube.  Thus the two finite array laws are equal for every
$n,m,R$.  By the anchored reconstruction theorem, Theorem~\ref{thm:anchored},
this implies $\lambda=\lambda_0$, a contradiction.  Thus a finite certificate
exists.

For the second assertion, set
\[
  K=\J(T,S)\setminus\mathcal U .
\]
Since $T$ and $S$ are continuous, $\J(T,S)$ is weakly compact by
Proposition~\ref{prop:compact}, and $K$ is compact.  If $K=\emptyset$, the
claim is vacuous after choosing any one finite statistic and any
$\varepsilon>0$.  Assume $K\ne\emptyset$.  For each
$\lambda\in K$, the first part gives a triple $(n_\lambda,m_\lambda,R_\lambda)$
and a $1$-Lipschitz function $\psi_\lambda$ with a nonzero gap
\[
  \Delta_\lambda=
  \left|
  \int\psi_\lambda\,\dd\widetilde\Phi_{n_\lambda,m_\lambda,R_\lambda}(\lambda)
  -
  \int\psi_\lambda\,\dd\widetilde\Phi_{n_\lambda,m_\lambda,R_\lambda}(\lambda_0)
  \right|>0 .
\]
Because the anchored array map is continuous in $\lambda$ and $\psi_\lambda$
is continuous, the set
\[
  U_\lambda=
  \left\{
  \eta\in\J(T,S):
  \left|
  \int\psi_\lambda\,\dd\widetilde\Phi_{n_\lambda,m_\lambda,R_\lambda}(\eta)
  -
  \int\psi_\lambda\,\dd\widetilde\Phi_{n_\lambda,m_\lambda,R_\lambda}(\lambda_0)
  \right|
  >
  \frac{\Delta_\lambda}{2}
  \right\}
\]
is an open neighborhood of $\lambda$.  The family $\{U_\lambda:\lambda\in K\}$
covers $K$, so by compactness there are
$\lambda_1,\ldots,\lambda_L\in K$ such that
$K\subset\bigcup_{\ell=1}^L U_{\lambda_\ell}$.  Put
\[
  \varepsilon=\frac12\min_{1\le\ell\le L}\Delta_{\lambda_\ell}>0
\]
and use the corresponding triples and test functions.  If a joining $\eta$
satisfies all displayed inequalities with this $\varepsilon$, then
$\eta\notin U_{\lambda_\ell}$ for every $\ell$, hence $\eta\notin K$ and
therefore $\eta\in\mathcal U$.
\end{proof}

\begin{corollary}[Finite certificates for non-disjointness]
\label{cor:finite-certificate-nondisjoint}
Assume $T$ and $S$ are continuous and dense support anchors are fixed.  If
$\Xbf$ and $\Ybf$ are not disjoint, then there are $n,m,R\ge1$ and a
$1$-Lipschitz continuous anchored distance-array statistic $\psi$ such that
\[
  \sup_{\lambda\in\J(T,S)}
  \left|
  \int \psi\,\dd\widetilde\Phi_{n,m,R}(\lambda)
  -
  \int \psi\,\dd\widetilde\Phi_{n,m,R}(\mu\otimes\nu)
  \right|>0 .
\]
In particular,
\[
  \widetilde{\operatorname{Dep}}_{n,m,R,1}(\Xbf,\Ybf)>0
\]
for some finite triple $(n,m,R)$.
\end{corollary}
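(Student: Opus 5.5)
The plan is to read this corollary off part (1) of Theorem~\ref{thm:finite-certificate}, and then pass from the single test function to the Wasserstein coefficient by Kantorovich--Rubinstein duality. First, non-disjointness means $\J(T,S)\ne\{\mu\otimes\nu\}$, so we can fix a joining $\lambda_1\in\J(T,S)$ with $\lambda_1\ne\lambda_0=\mu\otimes\nu$. Since $T$ and $S$ are continuous and dense support anchors are fixed, the hypotheses of Theorem~\ref{thm:finite-certificate} hold. Part (1) of that theorem then yields integers $n,m,R\ge1$ and a $1$-Lipschitz continuous function $\psi$ on the anchored array cube with
\[
  \int\psi\,\dd\widetilde\Phi_{n,m,R}(\lambda_1)\ne\int\psi\,\dd\widetilde\Phi_{n,m,R}(\lambda_0).
\]
The supremum over $\lambda\in\J(T,S)$ of the absolute difference is at least the value at $\lambda=\lambda_1$, which is strictly positive. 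This proves the first display.

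For the second claim, I will use the dual formula for $W_1$ on the compact finite array cube \cite[Ch.~1]{villani2009optimal}. Since $\psi$ is $1$-Lipschitz,
\[
  W_1\bigl(\widetilde\Phi_{n,m,R}(\lambda_1),\widetilde\Phi_{n,m,R}(\lambda_0)\bigr)
  \ge
  \Bigl|\int\psi\,\dd\widetilde\Phi_{n,m,R}(\lambda_1)-\int\psi\,\dd\widetilde\Phi_{n,m,R}(\lambda_0)\Bigr|>0.
\]
Taking the supremum over joinings in the definition of $\widetilde{\operatorname{Dep}}_{n,m,R,1}$ gives $\widetilde{\operatorname{Dep}}_{n,m,R,1}(\Xbf,\Ybf)>0$ for the same triple.

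I do not expect a real obstacle. The only point to check is that the Lipschitz constant of $\psi$ is taken with respect to the same product Euclidean metric on the cube that was fixed in the definition of $W_p$. This holds because Theorem~\ref{thm:finite-certificate} was itself derived from the Kantorovich--Rubinstein formula for that metric.

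As a cross-check, Theorem~\ref{thm:zero} with $p=1$ also gives positivity of some anchored coefficient, since that theorem does not need continuity. Continuity is used here only to obtain the explicit finite statistic $\psi$.
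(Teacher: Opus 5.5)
Your proposal is correct and follows essentially the same route as the paper. You pick a joining $\lambda\ne\mu\otimes\nu$, apply part (1) of Theorem~\ref{thm:finite-certificate} to get the triple and the $1$-Lipschitz statistic, and then deduce positivity of $\widetilde{\operatorname{Dep}}_{n,m,R,1}$ from Kantorovich--Rubinstein duality and the fact that the coefficient is a supremum over joinings.
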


\begin{proof}
Non-disjointness gives a joining $\lambda\ne\mu\otimes\nu$.  Apply the first
part of Theorem~\ref{thm:finite-certificate} to this joining.  The final
claim follows from the Kantorovich--Rubinstein dual representation of $W_1$
\cite[Ch.~1]{villani2009optimal}
and the definition of the anchored dependence coefficient.
\end{proof}

\section{Discussion}

The results separate three levels of information.  Single-orbit distance
patterns are too poor: in homogeneous examples they may be deterministic.
Unanchored multi-particle arrays are intrinsic and reconstruct the joining as a
marked colored orbit-distance kernel space, with the unavoidable ambiguity of
twin-free quotients and metric-dynamical symmetries.  Anchored arrays remove
that ambiguity and give a fixed-model criterion equivalent to ordinary
Furstenberg disjointness.

This distinction is important.  The intrinsic theory is not a disguised
coordinate reconstruction theorem; it is a reconstruction theorem at the level
that finite unanchored metric data can naturally see.  The anchored theory, by
contrast, deliberately adds external names through a dense family of metric
anchors, and therefore recovers literal equality of joinings on a fixed compact
model.  The two viewpoints are complementary: one is invariant under the
unavoidable symmetries of metric observation, while the other is faithful to a
chosen model.

The Wasserstein coefficients should also be interpreted at this level.  A
finite coefficient depends on the chosen compact metric model and on the finite
observation scale.  Its all-order vanishing, however, is a measure-theoretic
statement because it is equivalent to Furstenberg disjointness in the anchored
hierarchy.  The finite-certificate theorem adds a compactness principle: once a
weak neighborhood of the product joining is prescribed, finitely many anchored
orbit-distance statistics suffice to force every joining into that
neighborhood.

Several questions remain open.  First, it would be natural to extend the
criterion beyond compact metric models, for instance to locally compact or
Polish models under moment assumptions on the distance arrays.  Second, the
quantitative hierarchy may carry more structure than its zero set: for
particular classes of systems one can ask whether rates of decay or separation
reflect spectral gaps, rates of mixing, or the presence of compact factors.
Third, the intrinsic quotient reconstruction suggests a finer comparison of
joining spaces themselves, not only the dichotomy between product and
non-product joinings.  These directions point toward a metric reconstruction
view of joining theory in which finite orbit-distance patterns are not auxiliary
statistics, but finite shadows of the joining space.

\section*{Acknowledgements}

The author gratefully acknowledges the support of Jilin University and
Zhongguancun Academy.  This work is supported by the Zhongguancun Academy,
Grant No.~C20250201.

\section*{Declarations}

\subsection*{Conflict of interest}
The author declares no competing interests.

\subsection*{Data availability}
No datasets were generated or analysed during the current study.

\bibliography{references}

\end{document}